\documentclass[a4j,reqno]{amsart}
\usepackage{amsmath,amsthm}
\usepackage{amssymb}
\usepackage{amsthm}
\usepackage{amscd}
\usepackage{stmaryrd}
\usepackage[dvipdfmx]{graphicx}
\usepackage[all]{xy}
\usepackage{wrapfig}
\usepackage{color}
\usepackage{latexsym}
\usepackage{enumerate}
\usepackage{cases}
\usepackage[top=30mm,bottom=30mm,left=30mm,right=30mm]{geometry}
\usepackage{graphics}
\numberwithin{equation}{section}
\theoremstyle{definition}
\newtheorem{example}{Example}[section]
\newtheorem{definition}[example]{Definition}

\newtheorem{lemma}[example]{Lemma}
\newtheorem{theorem}[example]{Theorem}
\newtheorem{proposition}[example]{Proposition}
\newtheorem{corollary}[example]{Corollary}

\def\MARU#1{\leavevmode \setbox0\hbox{$\bigcirc$}%
 \copy0\kern-\wd0 \hbox to\wd0{\hfil{#1}\hfil}}

\title{Cluster expansion formulas in type $A$}
\author{toshiya yurikusa}
\address{T. Yurikusa: Graduate School of Mathematics, Nagoya University, Chikusa-ku, Nagoya, 464-8602 Japan}
\email{m15049q@math.nagoya-u.ac.jp}

\begin{document}

\maketitle

\begin{abstract}
 The aim of this paper is to give analogs of the cluster expansion formula of Musiker and Schiffler for cluster algebras of type $A$ with coefficients arising from boundary arcs of the corresponding triangulated polygon. Indeed, we give three cluster expansion formulas by perfect matchings of angles in triangulated polygon, by discrete subsets of arrows of the corresponding ice quiver and by minimal cuts of the corresponding quiver with potential.
\end{abstract}


\section{Introduction}

\subsection{Background}
 Cluster algebras introduced by Fomin and Zelevinsky in $2002$ \cite{FZ1} are commutative algebras with a distinguished set of generators, which are called cluster variables. Their original motivation was coming from an algebraic framework for total positivity and canonical bases in Lie Theory. In recent years, it has applied to various subjects in mathematics, for example, quiver representations, Calabi-Yau categories, Poisson geometry, Teichm\"{u}ller spaces, exact WKB analysis, etc. We refer to the introductory articles \cite{FZ1,GSV,K,S} for details.

 By Laurent phenomenon, any cluster variable in a cluster algebra is expressed by a Laurent polynomial of the initial cluster variables $(x_1,\ldots,x_n)$ amd coefficients $(x_{n+1},\ldots,x_m)$
 \begin{equation*}
  x=\frac{f(x_1, \ldots,x_m)}{x_1^{d_1}x_2^{d_2} \cdots x_n^{d_n}}
 \end{equation*}
where $f(x_1, \ldots, x_m) \in \mathbb{Z}[x_1,\ldots, x_m]$ and $d_i \in \mathbb{Z}_{\ge 0}$ \cite{FZ1}. An explicit formula for the Laurent polynomials of cluster variables is called a cluster expansion formula. In \cite{LS1}, Lee and Schiffler gave a cluster expansion formula for the cluster algebras of rank $2$ in terms of Dyck paths, and as an application, they proved in \cite{LS2} the positivity conjecture of Fomin and Zelevinsky \cite{FZ1}. Another well-known cluster expansion formula is Caldero-Chapoton formula given by categorification \cite{CC,CK,P,Pl}. It was also applied to solve several conjectures of Fomin and Zelevinsky (see e.g. \cite{CKLP}).

 It was posed in \cite{LS2} as an open problem to find out explicit formulas for the cluster variables in any cluster algebra. An example of such formulas is given by Musiker and Schiffler \cite{MS} for the cluster algebras associated with triangulated surfaces ($S,M$) with marked points without punctures studied by Fomin, Shapiro and Thurston \cite{FST}. Their formula is described in terms of the perfect matchings in a labeled graph $G$ constructed from $(S,M)$. It was extended to cluster algebras associated with triangulated surfaces with punctures by Musiker, Schiffler and Williams {\cite{MSW}}.\par


\subsection{Our results}
 In this paper we study cluster algebras of type $A_n$ with $n+3$ coefficients. More precisely, it is the cluster algebra structure of the homogeneous coordinate ring of the Grassmannian ${\rm Gr}(2,n+3)$ introduced in \cite{FZ2}.

 We denote by $[1,n]$ the interval $\{1,2,\ldots,n\}$. For a triangulated $(n+3)$-gon $T$ with $n$ diagonals, we construct a quiver $Q_T$ using the following rule \cite{CCS,FST}.\\
 $\bullet$ If the diagonals $i$ and $j$ are in a common triangle of $T$ and $j$ follows $i$ in the clockwise order, then there is an arrow from $i$ to $j$:
\[ \begin{xy}
   (0,-2)="0", +(5,8.7)="1", +(5,-8.7)="2"
   \ar@{-}"0";"1"^{i} \ar@{-}"1";"2"^{j} \ar@{-}"2";"0"^{k}
  \end{xy}\hspace{15mm}
  \begin{xy}
   (0,0)="0", +(5,8.7)="1", +(5,-8.7)="2", "0"+(-1,6)="i"*{i}, "2"+(1,6)="j"*{j}, "0"+(5,-4)="k"*{k}
   \ar@{.}"0";"1" \ar@{.}"1";"2" \ar@{.}"2";"0" \ar@{->}"i"+(1.5,0);"j"+(-1.5,0) \ar@{->}"j"+(-1,-2);"k"+(1.5,1.5) \ar@{->}"k"+(-1.5,1.5);"i"+(1,-2)
  \end{xy}.\]
 We define a new quiver $\overline{Q}_T$ from $Q_T$ by adding a vertex for each boundary arc of $T$ and arrows between original vertices and new vertices following the above rule. We give an example of this construction:
\[
  T : \hspace{4mm}\begin{xy}
   (0,0)="0", +(13,7)="1", +(13,-7)="2", +(0,-16)="3", +(-13,-7)="4", +(-13,7)="5"
   \ar@{-}"0";"1"|{4} \ar@{-}"1";"2"|{9} \ar@{-}"2";"3"|{8} \ar@{-}"3";"4"|{7} \ar@{-}"4";"5"|{6} \ar@{-}"5";"0"|{5} \ar@{-}"1";"5"|{1} \ar@{-}"1";"4"|{2} \ar@{-}"1";"3"|{3}
  \end{xy}\hspace{15mm}
 \overline{Q}_T : \hspace{4mm}\begin{xy}
   (0,0)="0", +(13,7)="1", +(13,-7)="2", +(0,-16)="3", +(-13,-7)="4", +(-13,7)="5"
   \ar@{-}"0";"1"|{4} \ar@{-}"1";"2"|{9} \ar@{-}"2";"3"|{8} \ar@{-}"3";"4"|{7} \ar@{-}"4";"5"|{6} \ar@{-}"5";"0"|{5} \ar@{-}"1";"5"|{1} \ar@{-}"1";"4"|{2} \ar@{-}"1";"3"|{3} \ar@{->}"1"+(-6.5,-5);"1"+(-6.5,-9.5) \ar@{->}"1"+(-7.5,-11.5);"1"+(-12,-14.5) \ar@{->}"1"+(-5.5,-12);"1"+(-1,-15) \ar@{->}"1"+(-1,-16);"1"+(-5.5,-25.5) \ar@{->}"1"+(-6.5,-25);"1"+(-6.5,-13) \ar@{->}"1"+(1,-15);"1"+(5.5,-12) \ar@{->}"1"+(6.5,-13);"1"+(6.5,-25) \ar@{->}"1"+(5.5,-25.5);"1"+(1,-16) \ar@{->}"1"+(6.5,-9.5);"1"+(6.5,-5) \ar@{->}"1"+(12,-14.5);"1"+(7.5,-11.5)
  \end{xy}
\]

 Recall that any quiver of type $A_n$ is isomorphic to $Q_T$ for some triangulated $(n+3)$-gon $T$ \cite{CCS}. Let $T$ be a triangulated $(n+3)$-gon such that $Q:=Q_T$ is an acyclic quiver with underlying graph

\[
  \begin{xy}
   (0,0)="1"*{1}, +(10,0)="2"*{2}, +(10,0)="3"*{3}, +(10,0)="A"*{}, +(10,0)="B"*{}, +(10,0)="n"*{n}
   \ar@{-}"1"+(2,0);"2"+(-2,0) \ar@{-}"2"+(2,0);"3"+(-2,0) \ar@{-}"3"+(2,0);"A"+(-2,0) \ar@{.}"A"+(2,0);"B"+(-2,0) \ar@{-}"B"+(2,0);"n"+(-2,0)
  \end{xy}.
\]
 We regard $\overline{Q} := \overline{Q}_T$ as an ice quiver of type $(n,2n+3)$ with frozen vertices $[n+1,2n+3]$ and denote by ${\mathcal A}(\overline{Q})$ the corresponding cluster algebra (see Definition \ref{clusteralg}). It is known that non-initial cluster variables of ${\mathcal A}(\overline{Q})$ are indexed by pairs $(i,j)$ of integers such that $1 \le i \le j \le n$ \cite{FZ2,ST}. More precisely, $(i,j)$ is associated with a cluster variable
 \begin{equation*}
  \frac{f^{[i,j]}}{x_ix_{i+1} \cdots x_j},
 \end{equation*}
where $f^{[i,j]} \in \mathbb{Z}[x_1,\ldots,x_{2n+3}]$ is not divisible by $x_1,\ldots,x_n$.

 Our main result is the cluster expansion formula for $f^{[i,j]}$ using the following notion.

\begin{definition}\label{perangle}
 A {\it perfect matching} of angles in $T$ is a selection of one marked angle per vertex incident to at least one diagonal such that each triangle of $T$ has exactly one marked angle. We denote by ${\mathbb A}(T)$ the set of all perfect matchings of angles in $T$.
\end{definition}

 Clearly, any perfect matching $A \in {\mathbb A}(T)$ satisfies $|A|=n+1$. For example, we provide the complete list of ${\mathbb A}(T)$ for the above case $T$:
 \begin{equation}\label{firstpmex}
  \begin{xy}
   (0,5)="0", +(8.7,5)="1", +(8.7,-5)="2", +(0,-10)="3", +(-8.7,-5)="4", +(-8.7,5)="5", "1"+(1,-4)*{{\color{red}\bullet}}, "5"+(1,3.8)*{{\color{red}\bullet}}, "4"+(-1.5,2.5)*{{\color{red}\bullet}}, "3"+(-1,3.8)*{{\color{red}\bullet}}
   \ar@{-}"0";"1"^{4} \ar@{-}"1";"2"^{9} \ar@{-}"2";"3"^{8} \ar@{-}"3";"4"^{7} \ar@{-}"4";"5"^{6} \ar@{-}"5";"0"^{5} \ar@{-}"1";"5"|{1} \ar@{-}"1";"4"|{2} \ar@{-}"1";"3"|{3}
  \end{xy}
  \hspace{10mm}\begin{xy}
   (0,5)="0", +(8.7,5)="1", +(8.7,-5)="2", +(0,-10)="3", +(-8.7,-5)="4", +(-8.7,5)="5", "1"+(-1,-4)*{{\color{red}\bullet}}, "5"+(1,3.8)*{{\color{red}\bullet}}, "4"+(1.5,2.5)*{{\color{red}\bullet}}, "3"+(-1,3.8)*{{\color{red}\bullet}}
   \ar@{-}"0";"1"^{4} \ar@{-}"1";"2"^{9} \ar@{-}"2";"3"^{8} \ar@{-}"3";"4"^{7} \ar@{-}"4";"5"^{6} \ar@{-}"5";"0"^{5} \ar@{-}"1";"5"|{1} \ar@{-}"1";"4"|{2} \ar@{-}"1";"3"|{3}
  \end{xy}
  \hspace{10mm}\begin{xy}
   (0,5)="0", +(8.7,5)="1", +(8.7,-5)="2", +(0,-10)="3", +(-8.7,-5)="4", +(-8.7,5)="5", "1"+(3,-3)*{{\color{red}\bullet}}, "5"+(1,3.8)*{{\color{red}\bullet}}, "4"+(-1.5,2.5)*{{\color{red}\bullet}}, "3"+(-2,0.5)*{{\color{red}\bullet}}
   \ar@{-}"0";"1"^{4} \ar@{-}"1";"2"^{9} \ar@{-}"2";"3"^{8} \ar@{-}"3";"4"^{7} \ar@{-}"4";"5"^{6} \ar@{-}"5";"0"^{5} \ar@{-}"1";"5"|{1} \ar@{-}"1";"4"|{2} \ar@{-}"1";"3"|{3}
  \end{xy}
  \hspace{10mm}\begin{xy}
   (0,5)="0", +(8.7,5)="1", +(8.7,-5)="2", +(0,-10)="3", +(-8.7,-5)="4", +(-8.7,5)="5", "1"+(-3,-3)*{{\color{red}\bullet}}, "5"+(2,0.5)*{{\color{red}\bullet}}, "4"+(1.5,2.5)*{{\color{red}\bullet}}, "3"+(-1,3.8)*{{\color{red}\bullet}}
   \ar@{-}"0";"1"^{4} \ar@{-}"1";"2"^{9} \ar@{-}"2";"3"^{8} \ar@{-}"3";"4"^{7} \ar@{-}"4";"5"^{6} \ar@{-}"5";"0"^{5} \ar@{-}"1";"5"|{1} \ar@{-}"1";"4"|{2} \ar@{-}"1";"3"|{3}
  \end{xy}
 \end{equation}

 We denote by $T^{[i,j]}$ the triangulated subpolygon of $T$ such that the set of diagonals of $T^{[i,j]}$ is $[i,j]$. We give our main result using perfect matchings of angles in $T^{[i,j]}$.

\begin{theorem}\label{angle}
 For $1 \le i \le j \le n$, we have
 \begin{equation*}
  f^{[i,j]}=\sum_{A \in {\mathbb A}(T^{[i,j]})} \prod_{a \in A}x_{a},
 \end{equation*}
where $x_a$ is the initial cluster variable corresponding to the opposite side of the angle $a$.
\end{theorem}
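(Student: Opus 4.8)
The plan is to prove Theorem~\ref{angle} by induction on $j-i$, or equivalently on the number $n'=j-i+1$ of diagonals of $T^{[i,j]}$, using the exchange relations of the cluster algebra. The base case $i=j$ is the statement that $f^{[i,i]}$ equals a sum over ${\mathbb A}(T^{[i,i]})$, where $T^{[i,i]}$ is a single triangle (or a quadrilateral, depending on conventions) with one diagonal; here the cluster variable attached to $(i,i)$ is obtained by a single mutation at vertex $i$, so $f^{[i,i]}$ is the appropriate sum of two monomials in the frozen variables, and one checks directly that these two monomials are exactly $\prod_{a\in A}x_a$ as $A$ ranges over the two perfect matchings of angles in the triangulated quadrilateral. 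For the inductive step, I would fix $[i,j]$ with $i<j$ and locate a diagonal, say $k$ with $i\le k\le j$, that is a ``leaf'' in the quiver $Q_{T^{[i,j]}}$ (an endpoint of the type~$A$ path), so that mutating at $k$ relates the cluster variable for $(i,j)$ to cluster variables for the two subpolygons obtained by removing $k$, together with the diagonal $k$ itself.

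The key computation is then to match the exchange relation at the leaf $k$ with a decomposition of the perfect-matching sum. Concretely, removing the diagonal $k$ from $T^{[i,j]}$ splits it (after the flip) into two triangulated subpolygons whose diagonal sets are $[i,k-1]$ and $[k+1,j]$ (one of which may be empty), and the exchange relation reads, up to the denominators, as $f^{[i,j]}\cdot(\text{something}) = f^{[i,k-1]}f^{[k+1,j]}\cdot x_{?} + (\text{monomial in frozens})\cdot x_{?}$. On the combinatorial side, every perfect matching $A\in{\mathbb A}(T^{[i,j]})$ assigns a marked angle to each of the two triangles of $T^{[i,j]}$ adjacent to $k$; sorting perfect matchings according to this local choice at $k$ partitions ${\mathbb A}(T^{[i,j]})$ into classes, and each class should be shown to be in bijection with (products of) perfect matchings of the subpolygons $T^{[i,k-1]}$ and $T^{[k+1,j]}$ — the two terms of the exchange relation corresponding to the two ways the marked angle at the shared vertex of these two triangles can be placed. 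Carrying out this bijection carefully, and verifying that the weights $\prod_a x_a$ multiply correctly (including the frozen-variable bookkeeping coming from the boundary arcs that get created/absorbed in the flip), is the heart of the argument.

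The main obstacle I anticipate is precisely this bookkeeping: when one flips the leaf diagonal $k$, some arcs that were diagonals of $T^{[i,j]}$ become boundary arcs of the subpolygons and vice versa, and the frozen variables $x_a$ attached to the \emph{opposite side} of each angle must be tracked through this surgery so that the product identity is exact, not merely up to an invertible monomial. I would handle this by setting up clean notation for the local picture around $k$ (the triangle(s) containing $k$, their third vertices, and the boundary arcs involved) and checking the weight identity triangle-by-triangle. A secondary technical point is the choice of which diagonal to mutate at: I want a diagonal $k$ whose removal genuinely disconnects the combinatorics into the intervals $[i,k-1]$ and $[k+1,j]$, which holds when $Q_{T^{[i,j]}}$ is the linear type~$A$ quiver; since $T$ was chosen so that $Q_T$ has linear underlying graph, any $T^{[i,j]}$ inherits this, and one may take $k=i$ or $k=j$. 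Finally, one must confirm that the resulting cluster variable is indeed the one indexed by $(i,j)$ in the sense of \cite{FZ2,ST} — this is where the earlier identification of non-initial cluster variables with pairs $(i,j)$ is invoked, so that the induction is indexing the right objects throughout.
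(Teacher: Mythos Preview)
Your approach is genuinely different from the paper's. The paper does not induct on exchange relations at all: it invokes the Musiker--Schiffler snake-graph formula (Theorem~\ref{MS}) as a black box, and the entire content of the proof of Theorem~\ref{angle} is Proposition~\ref{PMPM}, a weight-preserving bijection $\mathbb{A}(T)\to\mathbb{P}(G)$ between angle matchings and perfect matchings of the associated snake graph $G$. So the paper is a transfer between two combinatorial models, with all cluster-algebraic input outsourced to~\cite{MS}. Your route would be self-contained and would not need the snake-graph machinery; the paper's route is shorter but relies on~\cite{MS}.

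That said, your description of the inductive step contains a real misconception. Mutating the \emph{initial} seed at a leaf vertex $k$ produces the variable $x_{(k,k)}$ and gives no relation involving $x_{(i,j)}$ for $i<j$. The relation you actually need is the Ptolemy relation for the arc $\gamma_{(i,j)}$ against the initial diagonal~$j$: if $p\in\triangle_{i-1}$ and $q\in\triangle_j$ are the endpoints of $\gamma_{(i,j)}$ and $r,s$ are the endpoints of diagonal~$j$, then
\[
x_{(i,j)}\,x_j \;=\; x_{pr}\,x_{qs}\;+\;x_{ps}\,x_{qr},
\]
where $qr,qs$ are the two boundary sides of $\triangle_j$, one of $pr,ps$ equals $\gamma_{(i,j-1)}$, and the other equals $\gamma_{(i,m)}$ for some $m<j-1$ determined by how far the fan at the common vertex of $j-1$ and $j$ extends. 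So the two smaller intervals are $[i,j-1]$ and $[i,m]$, both with left endpoint~$i$; they are \emph{not} $[i,k-1]$ and $[k+1,j]$ as you wrote, and ``removing the leaf diagonal'' does not split the problem into two disjoint subintervals. Once this is corrected, the combinatorial half of your induction---partitioning $\mathbb{A}(T^{[i,j]})$ according to which angle of $\triangle_j$ is marked---is exactly the decomposition $\mathbb{A}(T)\simeq\mathbb{A}(T^{[1,n-1]})\sqcup\mathbb{A}(T^{[1,n-k-1]})$ that the paper establishes inside the proof of Proposition~\ref{PMPM}, and the weight check goes through.
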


 In the above example (\ref{firstpmex}), we obtain $f^{[1,3]}=x_1x_4x_7x_9+x_3x_4x_6x_9+x_1x_2x_4x_8+x_2x_3x_5x_9$ by Theorem \ref{angle}. Also, we obtain $f^{[1,2]}=x_1x_4x_7+x_3x_4x_6+x_2x_3x_5$ since the complete list of ${\mathbb A}(T^{[1,2]})$ is the following:
\[
  \begin{xy}
   (0,5)="0", +(8.7,5)="1", +(8.7,-5)="2", +(0,-10)="3", +(-8.7,-5)="4", +(-8.7,5)="5", "1"+(1,-4)*{{\color{red}\bullet}}, "5"+(1,3.8)*{{\color{red}\bullet}}, "4"+(-1.5,2.5)*{{\color{red}\bullet}}
   \ar@{-}"0";"1"^{4} \ar@{.}"1";"2" \ar@{.}"2";"3" \ar@{-}"3";"4"^{7} \ar@{-}"4";"5"^{6} \ar@{-}"5";"0"^{5} \ar@{-}"1";"5"|{1} \ar@{-}"1";"4"|{2} \ar@{-}"1";"3"|{3}
  \end{xy}
 \hspace{10mm}\begin{xy}
   (0,5)="0", +(8.7,5)="1", +(8.7,-5)="2", +(0,-10)="3", +(-8.7,-5)="4", +(-8.7,5)="5", "1"+(-1,-4)*{{\color{red}\bullet}}, "5"+(1,3.8)*{{\color{red}\bullet}}, "4"+(1.5,2.5)*{{\color{red}\bullet}}
   \ar@{-}"0";"1"^{4} \ar@{.}"1";"2" \ar@{.}"2";"3" \ar@{-}"3";"4"^{7} \ar@{-}"4";"5"^{6} \ar@{-}"5";"0"^{5} \ar@{-}"1";"5"|{1} \ar@{-}"1";"4"|{2} \ar@{-}"1";"3"|{3}
  \end{xy}
 \hspace{10mm}\begin{xy}
   (0,5)="0", +(8.7,5)="1", +(8.7,-5)="2", +(0,-10)="3", +(-8.7,-5)="4", +(-8.7,5)="5", "1"+(-3,-3)*{{\color{red}\bullet}}, "5"+(2,0.5)*{{\color{red}\bullet}}, "4"+(1.5,2.5)*{{\color{red}\bullet}}
   \ar@{-}"0";"1"^{4} \ar@{.}"1";"2" \ar@{.}"2";"3" \ar@{-}"3";"4"^{7} \ar@{-}"4";"5"^{6} \ar@{-}"5";"0"^{5} \ar@{-}"1";"5"|{1} \ar@{-}"1";"4"|{2} \ar@{-}"1";"3"|{3}
  \end{xy}
\]

 Next, we give another cluster expansion formula in type $A_n$ in terms of quivers. We introduce {\it discrete subsets} of an ice quiver.

\begin{definition}\label{discrete}
 Let $R=(R_0,R_1,s,t)$ be an ice quiver, and $R'$ the full subquiver of $R$ consisting of all non-frozen vertices. A subset $D$ of $R_1$ is called {\it discrete} if for any $\alpha,\beta \in D$ there is no path in $R'$ from $t(\alpha)$ to $s(\beta)$. We denote by ${\mathbb D}(R)$ the set of all maximal discrete subsets of $R$.
\end{definition}

 We denote by $\overline{Q}^{[i,j]}$ the ice quiver obtained from $T^{[i,j]}$.

\begin{corollary}\label{main1}
 For $1 \le i \le j \le n$, we have
 \begin{equation*}
  f^{[i,j]}=\sum_{D \in {\mathbb D}(\overline{Q}^{[i,j]})} \prod_{\alpha \in D}x_{\alpha},
 \end{equation*}
where $x_{\alpha}$ is the initial cluster variable corresponding to the third side of the triangle in $T$ with sides $s(\alpha)$ and $t(\alpha)$.
\end{corollary}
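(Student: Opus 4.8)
The plan is to deduce the statement from Theorem \ref{angle} by constructing a weight-preserving bijection between $\mathbb{A}(T^{[i,j]})$ and $\mathbb{D}(\overline{Q}^{[i,j]})$. Write $S:=T^{[i,j]}$ and $\overline{Q}:=\overline{Q}^{[i,j]}$, with $R':=Q_S$ its non-frozen full subquiver. First I set up a correspondence between angles and arrows: each arrow $\alpha$ of $\overline{Q}$ lies between two sides $s(\alpha),t(\alpha)$ of a unique triangle of $S$ which meet at a unique vertex $v(\alpha)$, and conversely each angle of $S$ at a vertex incident to at least one diagonal arises this way exactly once — the only excluded angles being the apex angles of ears (triangles with two boundary sides), and since the apex of an ear is incident to no diagonal, no perfect matching ever marks such an angle. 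Under the resulting bijection $a\mapsto\alpha_a$, the third side of the triangle of $\alpha_a$ is exactly the side opposite the angle $a$, so the weight $x_{\alpha_a}$ of the Corollary coincides with the weight $x_a$ of Theorem \ref{angle}. Thus it is enough to prove that $A\mapsto D_A:=\{\alpha_a\mid a\in A\}$ is a bijection $\mathbb{A}(S)\to\mathbb{D}(\overline{Q})$, since then Theorem \ref{angle} yields $f^{[i,j]}=\sum_{A}\prod_{a\in A}x_a=\sum_{D}\prod_{\alpha\in D}x_\alpha$.

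The engine is a description of paths in $R'$. The quiver $Q_S$ is acyclic (it is the induced subquiver of $Q_T$ on $[i,j]$), hence $S$ has no internal triangle, so every triangle of $S$ has a boundary side; in particular a path in $R'$ never ``turns inside a triangle''. Combined with the arrow rule, this shows that any path $d_0\to d_1\to\cdots\to d_k$ in $\overline{Q}$ in which no two consecutive arrows lie in one triangle has all its angles at a single vertex $B$, and its triangles form a consecutive fan about $B$ rotating in the direction prescribed by the clockwise rule; hence there is a path in $R'$ from a diagonal $d$ to a diagonal $d'$ precisely when $d$ and $d'$ share a vertex $B$ with $d'$ obtained from $d$ by rotation about $B$ in that direction without meeting a boundary edge. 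Two consequences: a discrete set $D$ contains at most one arrow from each triangle (the arrows of a triangle form a $3$-cycle, and any two of them are linked by such a path — directly composable, or through the third arrow) and at most one arrow with angle at each vertex (two such are again linked by a rotation-path).

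Now fix $A\in\mathbb{A}(S)$. Maximality of $D_A$ is immediate: if $\alpha_c\notin D_A$, then the triangle of $\alpha_c$ already contains a marked angle $a\neq c$, so $\alpha_a,\alpha_c$ lie in one triangle and $D_A\cup\{\alpha_c\}$ is not discrete. Discreteness of $D_A$ is the crux. Suppose $(\alpha_a,\alpha_b)$ with $a,b\in A$ admits a path $P$ in $R'$ from $t(\alpha_a)$ to $s(\alpha_b)$, and consider the extended walk $\alpha_a,P,\alpha_b$ in $\overline{Q}$. If it never turns inside a triangle it is a fan, so all its angles — in particular those of $\alpha_a$ and $\alpha_b$ — lie at a common vertex; but that vertex then carries two marked angles, a contradiction. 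Otherwise it turns inside the triangle $\Delta(\alpha_a)$ (at the start) and/or $\Delta(\alpha_b)$ (at the end), and in each such triangle one of the two sides at the turning vertex is the side opposite the marked angle; running the fan(s) of $P$ and using that each triangle met carries exactly one marked angle and that the side opposite $B$ sweeps monotonically along a fan, one is forced to put two marked angles in some triangle or to leave some diagonal-incident vertex with two or with zero marked angles. I expect this monotonicity/counting step along the fan to be the main obstacle of the proof; equivalently it is the statement, used below, that every triangle of $S$ contains an arrow of any maximal discrete set.

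Finally, $A\mapsto D_A$ is injective (it determines $A$ via $a\mapsto\alpha_a$) and surjective. Let $D\in\mathbb{D}(\overline{Q})$. By maximality every triangle of $S$ carries an arrow of $D$ — if not, the fan description of blocking paths allows one to enlarge $D$ — and with ``at most one per triangle'' this gives exactly one arrow of $D$ in each of the $m+1$ triangles of $S$, where $m=j-i+1$. The $m+1$ angles $\{a\mid\alpha_a\in D\}$ thus realize one angle per triangle, and being discrete they sit at $m+1$ distinct vertices, each incident to a diagonal; since $Q_S$ is acyclic, $S$ has exactly two vertices incident to no diagonal, hence exactly $m+1$ incident to one, so these $m+1$ angles form a perfect matching $A$ with $D_A=D$. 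Combining the bijection with Theorem \ref{angle} proves the Corollary.
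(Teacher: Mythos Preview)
Your approach is exactly the paper's: deduce the formula from Theorem~\ref{angle} via a weight-preserving bijection $\mathbb{A}(T^{[i,j]})\to\mathbb{D}(\overline{Q}^{[i,j]})$, and your map $a\mapsto\alpha_a$ is precisely the paper's map $\rho$ (Proposition~\ref{DPM}). Your fan description of paths in $R'$ is correct and is the geometric heart of the argument; the paper uses it implicitly when it writes every path in $Q$ as $\alpha_{il}\alpha_{i+1,l}\cdots\alpha_{hl}$ for a fixed vertex $v_l$.

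The gap you flag is real, and it is the only substantive one. In the discreteness direction, the ``turn'' case (when the first arrow of $P$ lies in $\Delta(\alpha_a)$, forcing $s(\alpha_a)$ to be boundary) is not finished by your monotonicity sketch. The paper's Lemma~\ref{pipm} closes it by a propagation along the fan: starting from the fact that the marked angle in the last triangle of the fan is \emph{not} the one at the fan vertex $v_l$, condition~(A1) forces the marked angle in that triangle to be $\alpha^+$ of the fan arrow; this pushes the same conclusion to the previous triangle, and so on back to $\Delta(\alpha_a)$, where it collides with $a\in A$. The same propagation idea, run in the other direction, is what the paper uses (Proposition~\ref{DPM}, case~(A1)(2)) to show that a maximal discrete set meets every triangle --- your ``fan description of blocking paths allows one to enlarge $D$'' is the right intuition but needs this step made explicit.

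Once one-per-triangle is established, your counting finish for surjectivity is a clean alternative to the paper's separate verification of~(A2): you use that an acyclic $S$ has exactly two ear-apex vertices, hence exactly $m+1$ diagonal-incident vertices, so ``one angle per triangle at pairwise distinct vertices'' already forces a perfect matching. This is correct and slightly slicker than the paper's direct argument.
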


 We prove Corollary \ref{main1} by giving a natural bijection between ${\mathbb A}(T^{[i,j]})$ and ${\mathbb D}(\overline{Q}^{[i,j]})$ (see Proposition \ref{DPM}). For example, for the case $Q = [ 1 \rightarrow 2 \rightarrow 3 ]$, ${\mathbb A}(T^{[1,2]})$ and ${\mathbb D}(\overline{Q}^{[1,2]})$ are the following:
\[
  \begin{xy}
   (0,0)="0", +(17.5,10)="1", +(17.5,-10)="2", +(0,-20)="3", +(-17.5,-10)="4", +(-17.5,10)="5", "5"+(1.4,5)*{{\color{red}\bullet}}, "4"+(-2,3)*{{\color{red}\bullet}}, "1"+(1.4,-5.5)*{{\color{red}\bullet}}
   \ar@{-}"0";"1"|{4} \ar@{.}"1";"2"|{9} \ar@{.}"2";"3"|{8} \ar@{-}"3";"4"|{7} \ar@{-}"4";"5"|{6} \ar@{-}"5";"0"|{5} \ar@{-}"1";"5"|{1} \ar@{-}"1";"4"|{2} \ar@{-}"1";"3"|{3} \ar@{->}"1"+(-8.8,-6.5);"1"+(-8.8,-13)|{x_5} {\color{red}\ar@{->}"1"+(-10,-15.5);"1"+(-16.5,-19.5)|{x_4}}\ar@{->}"1"+(-7.5,-15.5);"1"+(-1,-20)|{x_6} {\color{red}\ar@{->}"1"+(-1,-22);"1"+(-7.5,-33.5)|{x_1}}\ar@{->}"1"+(-8.8,-33);"1"+(-8.8,-17)|{x_2} {\color{red}\ar@{->}"1"+(1,-20);"1"+(7.5,-15.5)|{x_7}}\ar@{.>}"1"+(8.8,-17);"1"+(8.8,-33)|{x_2} \ar@{->}"1"+(7.5,-33.5);"1"+(1,-22)|{x_3} \ar@{.>}"1"+(8.8,-13);"1"+(8.8,-6.5)|{x_8} \ar@{.>}"1"+(16.5,-19.5);"1"+(10,-15.5)|{x_9}
  \end{xy}
 \hspace{10mm}
  \begin{xy}
   (0,0)="0", +(17.5,10)="1", +(17.5,-10)="2", +(0,-20)="3", +(-17.5,-10)="4", +(-17.5,10)="5", "5"+(1.4,5)*{{\color{red}\bullet}}, "4"+(2,3)*{{\color{red}\bullet}}, "1"+(-1.4,-5.5)*{{\color{red}\bullet}}
   \ar@{-}"0";"1"|{4} \ar@{.}"1";"2"|{9} \ar@{.}"2";"3"|{8} \ar@{-}"3";"4"|{7} \ar@{-}"4";"5"|{6} \ar@{-}"5";"0"|{5} \ar@{-}"1";"5"|{1} \ar@{-}"1";"4"|{2} \ar@{-}"1";"3"|{3} \ar@{->}"1"+(-8.8,-6.5);"1"+(-8.8,-13)|{x_5} {\color{red}\ar@{->}"1"+(-10,-15.5);"1"+(-16.5,-19.5)|{x_4}}{\color{red}\ar@{->}"1"+(-7.5,-15.5);"1"+(-1,-20)|{x_6}}\ar@{->}"1"+(-1,-22);"1"+(-7.5,-33.5)|{x_1} \ar@{->}"1"+(-8.8,-33);"1"+(-8.8,-17)|{x_2} \ar@{->}"1"+(1,-20);"1"+(7.5,-15.5)|{x_7} \ar@{.>}"1"+(8.8,-17);"1"+(8.8,-33)|{x_2} {\color{red}\ar@{->}"1"+(7.5,-33.5);"1"+(1,-22)|{x_3}}\ar@{.>}"1"+(8.8,-13);"1"+(8.8,-6.5)|{x_8} \ar@{.>}"1"+(16.5,-19.5);"1"+(10,-15.5)|{x_9}
  \end{xy}
 \hspace{10mm}
  \begin{xy}
   (0,0)="0", +(17.5,10)="1", +(17.5,-10)="2", +(0,-20)="3", +(-17.5,-10)="4", +(-17.5,10)="5", "5"+(2.5,0.5)*{{\color{red}\bullet}}, "4"+(2,3)*{{\color{red}\bullet}}, "1"+(-4,-4)*{{\color{red}\bullet}}
   \ar@{-}"0";"1"|{4} \ar@{.}"1";"2"|{9} \ar@{.}"2";"3"|{8} \ar@{-}"3";"4"|{7} \ar@{-}"4";"5"|{6} \ar@{-}"5";"0"|{5} \ar@{-}"1";"5"|{1} \ar@{-}"1";"4"|{2} \ar@{-}"1";"3"|{3} {\color{red}\ar@{->}"1"+(-8.8,-6.5);"1"+(-8.8,-13)|{x_5}}\ar@{->}"1"+(-10,-15.5);"1"+(-16.5,-19.5)|{x_4} \ar@{->}"1"+(-7.5,-15.5);"1"+(-1,-20)|{x_6} \ar@{->}"1"+(-1,-22);"1"+(-7.5,-33.5)|{x_1} {\color{red}\ar@{->}"1"+(-8.8,-33);"1"+(-8.8,-17)|{x_2}}\ar@{->}"1"+(1,-20);"1"+(7.5,-15.5)|{x_7} \ar@{.>}"1"+(8.8,-17);"1"+(8.8,-33)|{x_2} {\color{red}\ar@{->}"1"+(7.5,-33.5);"1"+(1,-22)|{x_3}}\ar@{.>}"1"+(8.8,-13);"1"+(8.8,-6.5)|{x_8} \ar@{.>}"1"+(16.5,-19.5);"1"+(10,-15.5)|{x_9}
  \end{xy}
\]
Thus we have $f^{[1,2]}=x_1x_4x_7+x_3x_4x_6+x_2x_3x_5$.

 Finally, we give an interpretation of maximal discrete subsets of $\overline{Q}$ in terms of a quiver with potential $(\widetilde{Q},\widetilde{W})$ given by Demonet and Luo \cite{DL}. We introduce the notion of {\it minimal cuts} of quivers with potential (Definition \ref{mincut}), and prove that they correspond bijectively with maximal discrete subsets of $\overline{Q}$. In particular, our cluster expansion formula can be written in terms of minimal cuts (Corollary \ref{cutformula}).



\section{Preliminaries}

\subsection{Cluster algebras with coefficients from ice quivers}

 We begin with recalling the definition of cluster algebras with coefficients associated with ice quivers \cite{K}. In the rest, we fix positive integers $n \le m$. An {\it ice quiver of type $(n,m)$} is a quiver $Q$ with vertices $Q_0=[1,m]$ such that there are no arrows between vertices in $[n+1,m]$. The elements of $[n+1,m]$ are called {\it frozen vertices}.

 To define cluster algebras with coefficients from ice quivers, we need to prepare some notations. Let ${\mathcal F}:={\mathbb Q}(t_1,\ldots,t_m)$ be a field of rational functions in $m$ variables over ${\mathbb Q}$.

\begin{definition}
 A {\it seed} is a pair $(x,Q)$ consisting of the following data:\par
 (i) $Q$ is an ice quiver of type $(n,m)$ without loops and $2$-cycles.\par
 (ii) $x=(x_1,\ldots,x_m)$ is a free generating set of ${\mathcal F}$ over ${\mathbb Q}$. Then each $x_i$ is called a {\it cluster variable} for $i \in [1,n]$ and a {\it coefficient} for $i \in [n+1,m]$.
\end{definition}

\begin{definition}
 For a seed $(x,Q)$, the $\it{mutation}$ $\mu_k(x,Q)=(x',Q')$ in direction $k$ $(1 \le k \le n)$ is defined as follows.\par
 (i) $Q'$ is the ice quiver obtained from $Q$ by the following steps:\par
 \hspace{5mm}(1) For any path $i \rightarrow k \rightarrow j$, add an arrow $i \rightarrow j$.\par
 \hspace{5mm}(2) Reverse all arrows incident to $k$.\par
 \hspace{5mm}(3) Remove a maximal set of disjoint $2$-cycles.\par
 \hspace{5mm}(4) Remove all arrow connecting two frozen vertices.\par
 (ii) $x'=(x'_1,\ldots,x'_m)$ is defined by
 \begin{equation*}
  x_k x'_k = \prod_{(j \rightarrow k) \in Q_1}x_j+\prod_{(j \leftarrow k) \in Q_1}x_j \ \ \mbox{and} \ \ x'_i = x_i \ \ \mbox{if} \ \ i \neq k.
 \end{equation*}
\end{definition}

Then it is elementary that $\mu_k(x,Q)=(x',Q')$ is also a seed. Moreover, $\mu_k$ is an involution, that is, we have $\mu_k\mu_k(x,Q)=(x,Q)$.\par

Now we define cluster algebras with coefficients. For an ice quiver $Q$, we fix a seed $(x=(x_1,\ldots,x_m),Q)$ which we call an {\it initial seed}. We also call each $x_i$ an {\it initial cluster variable}.

\begin{definition}\label{clusteralg}
The {\it cluster algebra} ${\mathcal A}(Q)={\mathcal A}(x,Q)$ {\it with coefficients} for the initial seed $(x,Q)$ is a $\mathbb{Z}$-subalgebra of ${\mathcal F}$ generated by the cluster variables and the coefficients obtained by all sequences of mutations from $(x,Q)$.
\end{definition}

\begin{example}\label{ex1}
The quiver $Q=\Biggl[$
\begin{xy}
 (0,5)="1"*{1}, +(0,-8)="2"*{2}, "1"+(-7,0)="3"*{3}, "1"+(7,0)="4"*{4}, "2"+(-7,0)="5"*{5}, "2"+(7,0)="6"*{6}
 \ar@{>}"1"+(0,-2);"2"+(0,2) \ar@{>}"3"+(1.5,0);"1"+(-1.5,0) \ar@{>}"1"+(1.5,0);"4"+(-1.5,0) \ar@{>}"2"+(-1.5,0);"5"+(1.5,0) \ar@{>}"6"+(-1.5,0);"2"+(1.5,0)
\end{xy}
$\Biggr]$ is an ice quiver of type $(2,6)$ with frozen vertices $[3,6]$. Let $(x=(x_1,x_2,x_3,x_4,x_5,x_6),Q)$ be a seed and we consider mutations of $(x,Q)$.

\[
  \mu_1(x,Q) :
  \begin{xy}
   (0,5)="1"*{\frac{x_3+x_2x_4}{x_1}}, +(0,-10)="2"*{x_2}, "1"+(-15,0)="3"*{x_3}, "1"+(15,0)="4"*{x_4}, "2"+(-15,0)="5"*{x_5}, "2"+(15,0)="6"*{x_6}
   \ar@{>}"2"+(0,2);"1"+(0,-3) \ar@{>}"1"+(-6.5,0);"3"+(2.5,0) \ar@{>}"4"+(-2.5,0);"1"+(6.5,0) \ar@{>}"2"+(-2.5,0);"5"+(2.5,0) \ar@{>}"6"+(-2.5,0);"2"+(2.5,0) \ar@{>}"3"+(2,-2);"2"+(-2,2)
  \end{xy}
 \hspace{15mm} \mu_2\mu_1(x,Q) :
  \begin{xy}
   (0,5)="1"*{\frac{x_3+x_2x_4}{x_1}}, +(0,-10)="2"*{\frac{x_3x_5+x_1x_3x_6+x_2x_4x_5}{x_1x_2}}, "1"+(-23,0)="3"*{x_3}, "1"+(23,0)="4"*{x_4}, "2"+(-23,0)="5"*{x_5}, "2"+(23,0)="6"*{x_6}
   \ar@{>}"1"+(0,-3);"2"+(0,3) \ar@{>}"4"+(-2.5,0);"1"+(6.5,0) \ar@{>}"5"+(2.5,0);"2"+(-15,0) \ar@{>}"2"+(15,0);"6"+(-2.5,0) \ar@{>}"2"+(-10,3);"3"+(2,-2) \ar@{>}"6"+(-2,2);"1"+(5,-2)
  \end{xy}
\]
\vspace{3mm}
\[
  \mu_1\mu_2\mu_1(x,Q) :
  \begin{xy}
   (0,5)="1"*{\frac{x_5+x_1x_6}{x_2}}, +(0,-10)="2"*{\frac{x_3x_5+x_1x_3x_6+x_2x_4x_5}{x_1x_2}}, "1"+(-23,0)="3"*{x_3}, "1"+(23,0)="4"*{x_4}, "2"+(-23,0)="5"*{x_5}, "2"+(23,0)="6"*{x_6}
   \ar@{>}"2"+(0,3);"1"+(0,-3) \ar@{>}"1"+(6.5,0);"4"+(-2.5,0) \ar@{>}"5"+(2.5,0);"2"+(-15,0) \ar@{>}"4"+(-2,-2);"2"+(10,3) \ar@{>}"2"+(-10,3);"3"+(2,-2) \ar@{>}"1"+(5,-2);"6"+(-2,2)
  \end{xy}
\]
\vspace{3mm}
\[
  \mu_2\mu_1\mu_2\mu_1(x,Q) :
  \begin{xy}
   (0,5)="1"*{\frac{x_5+x_1x_6}{x_2}}, +(0,-10)="2"*{x_1}, "1"+(-15,0)="3"*{x_3}, "1"+(15,0)="4"*{x_4}, "2"+(-15,0)="5"*{x_5}, "2"+(15,0)="6"*{x_6}
   \ar@{>}"1"+(0,-3);"2"+(0,2) \ar@{>}"2"+(-2.5,0);"5"+(2.5,0) \ar@{>}"3"+(2,-2);"2"+(-2,2) \ar@{>}"2"+(2,2);"4"+(-2,-2) \ar@{>}"5"+(2,2);"1"+(-5,-2) \ar@{>}"1"+(5,-2);"6"+(-2,2)
  \end{xy}
 \hspace{15mm} \mu_1\mu_2\mu_1\mu_2\mu_1(x,Q) :
  \begin{xy}
   (0,5)="1"*{x_2}, +(0,-10)="2"*{x_1}, "1"+(-12,0)="3"*{x_3}, "1"+(12,0)="4"*{x_4}, "2"+(-12,0)="5"*{x_5}, "2"+(12,0)="6"*{x_6}
   \ar@{>}"2"+(0,2);"1"+(0,-2) \ar@{>}"3"+(2,-2);"2"+(-2,2) \ar@{>}"2"+(2,2);"4"+(-2,-2) \ar@{>}"1"+(-2,-2);"5"+(2,2) \ar@{>}"6"+(-2,2);"1"+(2,-2)
  \end{xy}
\]\vspace{2mm}

 The calculation is periodic, hence we got all the cluster variables. Therefore, the cluster algebra is
 \begin{equation*}
  {\mathcal A}(Q)=\mathbb{Z}\bigl[x_1,x_2,\mbox{$\frac{x_3+x_2x_4}{x_1},\frac{x_5+x_1x_6}{x_2},\frac{x_3x_5+x_1x_3x_6+x_2x_4x_5}{x_1x_2}$},x_3,x_4,x_5,x_6\bigr].
 \end{equation*}
\end{example}
\vspace{1mm}


\subsection{The cluster expansion formula of Musiker and Schiffler}\label{subsectionMS}

 Musiker and Schiffler \cite{MS} gave a cluster expansion formula using a {\it labeled graph} and its {\it perfect matchings}. Let $T$ be a triangulated $(n+3)$-gon such that $Q_T$ is acyclic. We recall the construction of Musiker and Schiffler. For more details, see \cite{MS}. The construction can be followed step by step in Example \ref{MSex}. As $Q_T$ is acyclic, at most two sides of each triangle of $T$ are diagonals. The labeled graph $\overline{G}:=\overline{G}_{T}$ is obtained from $T$ by {\it unfolding} along the third side of each of triangles of $T$, two sides of which are diagonals. We label all edges of $\overline{G}$ by the corresponding arcs of $T$.
\[
  \begin{xy}
   (0,0)="0", +(15,0)="1", +(15,0)="2", "0"+(0,-10)="3", +(10,0)="4", +(10,0)="5", +(10,0)="6", "2"+(5,-5)="A", +(15,0)="B", +(5,10)="00", +(15,0)="01", "00"+(0,-10)="02", +(10,0)="03", +(10,0)="04", +(10,0)="05", "03"+(5,-10)="06", +(15,0)="07"
   \ar@{-}"0";"1" \ar@{-}"1";"2" \ar@{-}"3";"4" {\color{red}\ar@{-}"4";"5"_{a}}\ar@{-}"5";"6" \ar@{-}"1";"4"_{b} \ar@{-}"1";"5"^{c} \ar@{=>}"A";"B"^{{\rm unfolding}}_{{\rm along}\ \ a} \ar@{-}"00";"01" \ar@{-}"02";"03" {\color{red}\ar@{-}"03";"04"_{a}}\ar@{-}"04";"05" \ar@{-}"06";"07" \ar@{-}"01";"03"_{b} \ar@{-}"01";"04"^{c} \ar@{-}"03";"06"_{b} \ar@{-}"04";"06"^{c}
  \end{xy}
\]

 Let {\large $\boxbslash_k$} be the square of $\overline{G}$ with diagonal $k$, and $\overline{G}^{[i,j]}$ the minimal subgraph of $\overline{G}$ containing {\large $\boxbslash_i$}, {\large $\boxbslash_{i+1}$}, $\ldots$, {\large $\boxbslash_{j}$} for $1 \le i \le j \le n$. Let $G := G_{T}$ be the graph obtained from $\overline{G}$ by removing the diagonal in each {\large $\boxbslash_k$}. Similarly, let $\square_k$ and $G^{[i,j]}$ be the graphs obtained from {\large $\boxbslash_k$} and $\overline{G}^{[i,j]}$ in the same way as $G$.

\begin{definition}\label{pmside}
 A {\it perfect matching} in $G^{[i,j]}$ is a subset $P$ of the edges in $G^{[i,j]}$ such that each vertex is contained in exactly one edge on $P$. We denote by $\mathbb{P}(G^{[i,j]})$ the set of all perfect matchings in $G^{[i,j]}$.
\end{definition}

 The formula of Musiker and Schiffler is the following cluster expansion formula obtained by the perfect matchings in $G^{[i,j]}$. 

\begin{theorem}\label{MS}\cite{MS}
 For $1 \le i \le j \le n$, we have
 \begin{equation*}
  f^{[i,j]}=\sum_{P \in \mathbb{P}(G^{[i,j]})}\prod_{e \in P}x_{e}.
 \end{equation*}
\end{theorem}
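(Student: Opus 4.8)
The plan is to prove Theorem~\ref{MS} by induction on $d:=j-i+1$, the number of diagonals of $T$ crossed by the arc $\gamma_{[i,j]}$ of the $(n+3)$-gon whose cluster variable is $f^{[i,j]}/(x_i\cdots x_j)$; this $d$ is also the number of tiles of $G^{[i,j]}$. Since $Q_T$ is linearly oriented on $1,\ldots,n$, the triangulation $T$ is a fan: writing the polygon as $v_0v_1\cdots v_{n+2}$, all diagonals issue from $v_0$, with diagonal $k$ equal to $v_0v_{k+1}$, and one checks that $\gamma_{[i,j]}=v_iv_{j+2}$, crossing diagonals $i,i+1,\ldots,j$ in this order. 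Put $g^{[i,j]}:=\sum_{P\in\mathbb{P}(G^{[i,j]})}\prod_{e\in P}x_e$; the goal is $g^{[i,j]}=f^{[i,j]}$. For the base case $d=1$, let $p,q,r,s$ be the sides, in cyclic order, of the quadrilateral of $T$ with diagonal $i$ (so $p$ opposite $r$); then $G^{[i,i]}=\square_i$ is the $4$-cycle on $p,q,r,s$, with exactly the two perfect matchings $\{p,r\}$ and $\{q,s\}$, so $g^{[i,i]}=x_px_r+x_qx_s$, while the exchange relation for flipping diagonal $i$ in $T$ reads $x_i\,x_{\gamma_{[i,i]}}=x_px_r+x_qx_s$; since $x_{\gamma_{[i,i]}}=f^{[i,i]}/x_i$ this gives $f^{[i,i]}=g^{[i,i]}$.

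Next I set up a recursion for $f^{[i,j]}$ when $d\ge 2$. Passing from $T$ to a triangulation $\widehat T$ (joined to $T$ by flips, hence giving a seed of $\mathcal{A}(\overline Q)$) in which diagonal $j=v_0v_{j+1}$ lies in the quadrilateral with vertices $v_0,v_i,v_{j+1},v_{j+2}$, and flipping it — which produces $\gamma_{[i,j]}=v_iv_{j+2}$ — the exchange relation (which is the Ptolemy relation, because the coefficients of $\mathcal{A}(\overline Q)$ are exactly the boundary arcs of $T$) is
\begin{equation*}
 x_j\,x_{\gamma_{[i,j]}}\;=\;x_{v_0v_i}\,x_{v_{j+1}v_{j+2}}\;+\;x_{v_{j+2}v_0}\,x_{\gamma_{[i,j-1]}},
\end{equation*}
where $v_0v_i$, $v_{j+1}v_{j+2}$, $v_{j+2}v_0$ are arcs of $T$ (diagonals or boundary arcs, but in every case initial cluster variables) and $v_iv_{j+1}=\gamma_{[i,j-1]}$. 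Substituting $x_{\gamma_{[i,j]}}=f^{[i,j]}/(x_i\cdots x_j)$ and $x_{\gamma_{[i,j-1]}}=f^{[i,j-1]}/(x_i\cdots x_{j-1})$ and clearing denominators gives
\begin{equation}\label{frec}
 f^{[i,j]}\;=\;x_{v_0v_i}\,x_{v_{j+1}v_{j+2}}\,(x_ix_{i+1}\cdots x_{j-1})\;+\;x_{v_{j+2}v_0}\,f^{[i,j-1]}.
\end{equation}
It therefore suffices to show that $g^{[i,j]}$ satisfies \eqref{frec} with $f$ replaced by $g$.

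For this, observe that the snake graph $G^{[i,j]}$ is obtained from $G^{[i,j-1]}$ by attaching the tile $\square_j$ along a single boundary edge $\sigma$, namely the arc $v_jv_{j+1}$. The two vertices of $\square_j$ not on $\sigma$ lie in no other tile, so in every $P\in\mathbb{P}(G^{[i,j]})$ they are matched inside $\square_j$ in exactly one of two ways. In the first, the edge of $\square_j$ opposite $\sigma$ — which is the arc $v_{j+2}v_0$ — is used; the endpoints of $\sigma$ then remain free, and deleting this edge gives a weight-preserving bijection onto $\mathbb{P}(G^{[i,j-1]})$, contributing $x_{v_{j+2}v_0}\,g^{[i,j-1]}$. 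In the second, the two sides of $\square_j$ incident to $\sigma$ are used: one is the boundary arc $v_{j+1}v_{j+2}$, the other the diagonal $v_0v_j$ (i.e.\ diagonal $j-1$ of $T$), and now both endpoints of $\sigma$ are consumed. Because $T$ is a fan, $G^{[i,j]}$ is a ``spiral'' of tiles, and propagating the consumed vertices backwards forces exactly one edge at a time: in $\square_{j-1},\square_{j-2},\ldots,\square_{i+1}$ the forced edges are the diagonals $j-2,j-3,\ldots,i$ of $T$ respectively, and in $\square_i$ the forced edge is the arc $v_0v_i$. So the second way contributes the single monomial $x_{v_0v_i}\,x_{v_{j+1}v_{j+2}}\,(x_ix_{i+1}\cdots x_{j-1})$. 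Summing the two contributions gives the $g$-analogue of \eqref{frec}, which together with the inductive hypothesis $g^{[i,j-1]}=f^{[i,j-1]}$ completes the induction.

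The main obstacle is this last ``spiral'' argument: one must pin down the precise tile/gluing combinatorics of $G^{[i,j]}$ in the fan case and verify that the cascade of forced matched edges really runs through all $d$ tiles, picking up each of the diagonals $i,\ldots,j-1$ of $T$ exactly once — this is precisely where acyclicity of $Q_T$ enters — as well as treat the degenerate situations in which $v_0v_i$ or $v_{j+2}v_0$ is a boundary arc rather than a diagonal (i.e.\ $i=1$ or $j=n$), where the relevant arc is still an initial cluster variable and \eqref{frec} is unchanged. A cleaner but less self-contained route would be to invoke Schiffler's $T$-path expansion formula in type $A_n$ and to exhibit a weight-preserving bijection between $T$-paths from $i$ to $j$ and perfect matchings of $G^{[i,j]}$; but producing that bijection is of comparable difficulty to the induction above.
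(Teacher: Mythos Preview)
First, note that the paper does not prove Theorem~\ref{MS}: it is quoted verbatim from \cite{MS} as an established result and used as a black box (combined with Proposition~\ref{PMPM} to obtain Theorem~\ref{angle}). So there is no proof in the paper to compare your attempt against.

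That said, your proposal has a genuine gap at the very first step. You assert that ``$Q_T$ is linearly oriented on $1,\ldots,n$'' and hence that $T$ is a fan, but the paper's standing hypothesis is only that $Q_T$ is \emph{acyclic} with underlying graph the $A_n$ path; any orientation of the edges is allowed, and the paper explicitly works with non-linear examples such as $Q=[1\to 2\to 3\leftarrow 4\leftarrow 5]$ at the start of Section~3. For such $T$ the diagonals do not all emanate from a common vertex, so your identification $\gamma_{[i,j]}=v_iv_{j+2}$, the quadrilateral $v_0v_iv_{j+1}v_{j+2}$, and the Ptolemy relation you derive from it are simply unavailable.

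The combinatorial half of your argument fails for the same reason. In the fan case the snake $G^{[i,j]}$ is a pure staircase and your cascade in case~(b) does propagate through every tile, producing a single monomial. But once $Q_T$ has a sink or source in the interior, $G^{[i,j]}$ contains a straight triple $(\square_{k-1},\square_k,\square_{k+1})$; the forced cascade then halts at the first such straight spot, and case~(b) contributes not a monomial but a factor times $g^{[i,\ell]}$ for some $\ell<j-1$. This is exactly the ``type $k$'' dichotomy the paper records in observation~(3) preceding Proposition~\ref{PMPM}, where the correct decomposition is $\mathbb{P}(G)\simeq\mathbb{P}(G^{[1,n-1]})\sqcup\mathbb{P}(G^{[1,n-k-1]})$ with $k$ depending on where the last straight triple sits. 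Your inductive scheme can be repaired along these lines---one matches this two-term recursion for $g$ against the Ptolemy relation for the appropriate (non-fan) quadrilateral---but as written it proves the statement only for the linearly oriented case.
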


\begin{example}\label{MSex}
 For the quiver $Q=[ 1 \rightarrow 2 \rightarrow 3 ]$, we have
\[
  T
  \begin{xy}
   (0,0)="0", +(10,6)="1", +(10,-6)="2", +(0,-12)="3", +(-10,-6)="4", +(-10,6)="5", 
   \ar@{-}"0";"1"^{4} \ar@{-}"1";"2"^{9} \ar@{-}"2";"3"^{8} \ar@{-}"3";"4"^{7} \ar@{-}"4";"5"^{6} \ar@{-}"5";"0"^{5} \ar@{-}"1";"5"|{1} \ar@{-}"1";"4"|{2} \ar@{-}"1";"3"|{3}
  \end{xy}\hspace{6mm}
  \begin{xy}
   (0,-6)="1", +(13,0)="2", +(0,-13)="3", +(-13,0)="4", "2"+(13,0)="5", +(0,-13)="6", "2"+(0,13)="7", +(13,0)="8", "1"+(0,6)*{\overline{G}}
   \ar@{-}"1";"2"^{2} \ar@{-}"2";"3"|{6} \ar@{-}"3";"4"^{5} \ar@{-}"4";"1"^{4} \ar@{-}"2";"5"|{7} \ar@{-}"5";"6"^{3} \ar@{-}"6";"3"^{1} \ar@{-}"1";"3"|{1} \ar@{-}"2";"6"|{2} \ar@{-}"2";"7"^{2} \ar@{-}"7";"8"^{9} \ar@{-}"8";"5"^{8} \ar@{-}"7";"5"|{3}
  \end{xy}\hspace{6mm}
  \begin{xy}
   (0,4)*{\overline{G}^{[1,2]}}, +(-13,-7)="01", +(13,0)="02", +(0,-13)="03", +(-13,0)="04", "02"+(13,0)="05", +(0,-13)="06"
   \ar@{-}"01";"02"^{2} \ar@{-}"02";"03"|{6} \ar@{-}"03";"04"^{5} \ar@{-}"04";"01"^{4} \ar@{-}"02";"05"^{7} \ar@{-}"05";"06"^{3} \ar@{-}"06";"03"^{1} \ar@{-}"01";"03"|{1} \ar@{-}"02";"06"|{2}
  \end{xy}\hspace{6mm}
  \begin{xy}
   (0,4)*{G^{[1,2]}}, +(-13,-7)="01", +(13,0)="02", +(0,-13)="03", +(-13,0)="04", "02"+(13,0)="05", +(0,-13)="06"
   \ar@{-}"01";"02"^{2} \ar@{-}"02";"03"|{6} \ar@{-}"03";"04"^{5} \ar@{-}"04";"01"^{4} \ar@{-}"02";"05"^{7} \ar@{-}"05";"06"^{3} \ar@{-}"06";"03"^{1}
  \end{xy}
\]
Then $G^{[1,2]}$ has the following three perfect matchings:

\[
  \begin{xy}
   (0,0)="01", +(10,0)="02", +(0,-10)="03", +(-10,0)="04", "02"+(10,0)="05", +(0,-10)="06"
   \ar@{.}"01";"02"^{2} \ar@{.}"02";"03"|{6} \ar@{.}"03";"04"^{5} {\color{red}\ar@{-}"04";"01"^{4}}{\color{red}\ar@{-}"02";"05"^{7}}\ar@{.}"05";"06"^{3} {\color{red}\ar@{-}"06";"03"^{1}}
  \end{xy}\hspace{6mm}
  \begin{xy}
   (0,0)="01", +(10,0)="02", +(0,-10)="03", +(-10,0)="04", "02"+(10,0)="05", +(0,-10)="06"
   {\color{red}\ar@{-}"01";"02"^{2}}\ar@{.}"02";"03"|{6} {\color{red}\ar@{-}"03";"04"^{5}}\ar@{.}"04";"01"^{4} \ar@{.}"02";"05"^{7} {\color{red}\ar@{-}"05";"06"^{3}}\ar@{.}"06";"03"^{1}
  \end{xy}\hspace{12mm}
  \begin{xy}
   (0,0)="01", +(10,0)="02", +(0,-10)="03", +(-10,0)="04", "02"+(10,0)="05", +(0,-10)="06"
   \ar@{.}"01";"02"^{2} {\color{red}\ar@{-}"02";"03"|{6}}\ar@{.}"03";"04"^{5} {\color{red}\ar@{-}"04";"01"^{4}}\ar@{.}"02";"05"^{7} {\color{red}\ar@{-}"05";"06"^{3}}\ar@{.}"06";"03"^{1}
  \end{xy}
\]
Thus we have $f^{[1,2]}=x_1x_4x_7+x_2x_3x_5+x_3x_4x_6$.
\end{example}



\section{Proofs of Theorem \ref{angle} and Corollary \ref{main1}}

 We will show Theorem \ref{angle} and Corollary \ref{main1} only for $[i,j]=[1,n]$ since $f^{[i,j]}$ in ${\mathcal A}(\overline{Q})$ coincides with $f^{[i,j]}$ in ${\mathcal A}(\overline{Q}^{[i,j]})$ by Theorem \ref{MS}. Let $T$ be a triangulated $(n+3)$-gon such that $Q:=Q_T$ is an acyclic quiver, that is of type $A_n$.



\subsection{Proof of Theorem \ref{angle}}

 We prepare some notations to consider perfect matchings of angles in the triangulated $(n+3)$-gon $T$.\\
 (a) As $Q$ is of type $A_n$, it is possible, in a unique way, to label the triangles appearing in $T$ as $\triangle_0, \triangle_1, \ldots, \triangle_n$, such that two sides of $\triangle_i$ (resp., one side of $\triangle_0$, $\triangle_n$) are the diagonals $i$ and $i+1$ (resp., the diagonal $1$, the diagonal $n$) for $i \in [1,n-1]$.\\
 (b) Label $n+1$ vertices of $T$ by $v_0, \ldots, v_n$ such that $v_0$ is incident to the diagonal $1$ and not incident to the diagonal $2$, and $v_k$ is incident to the diagonal $k$ for $k \in [1,n]$.\\
 (c) Label by $a_{ij}$ the angle of $\triangle_i$ at the vertex $v_j$. Let $A(T)$ be the set of all angles in $T$ that are labeled in this way.
\[
  \begin{xy}
   (0,0)="0", +(9,-15.3)="1", +(-18,0)="2", "0"+(20,-8)*{\Longrightarrow}, +(20,8)="00", +(9,-15.3)="01", +(-18,0)="02", "0"+(0,2)*{v_j}, "1"+(3,0)*{v_h}, "2"+(-3,0)*{v_k}, "0"+(0,-10)*{\triangle_i}, "00"+(0,-6)*{a_{ij}}, "01"+(-5,2)*{a_{ih}}, "02"+(5,2)*{a_{ik}}
   \ar@{-}"0";"1" \ar@{-}"1";"2" \ar@{-}"2";"0" \ar@{-}"00";"01" \ar@{-}"01";"02" \ar@{-}"02";"00"
  \end{xy}
\]

 For example, the labellings of $T$ for the case $Q=[ 1 \rightarrow 2 \rightarrow 3 \leftarrow 4 \leftarrow 5 ]$ are the following:
\[
  \begin{xy}
   (0,0)="00", +(15,15)="01", +(30,0)="02", +(30,0)="03", +(15,-15)="04", +(-15,-15)="05", +(-30,0)="06", +(-30,0)="07", "01"+(-2.5,-6)*{a_{01}}, "01"+(3,-7)*{a_{11}}, "01"+(10,-7)*{a_{21}}, "01"+(12,-3)*{a_{31}}, "02"+(-1,-3)*{a_{34}}, "02"+(8,-3)*{a_{44}}, "03"+(-3,-3)*{a_{45}}, "03"+(3,-6)*{a_{55}}, "05"+(3,6)*{a_{53}}, "05"+(-3,7)*{a_{43}}, "05"+(-10,7)*{a_{33}}, "05"+(-12,2.5)*{a_{23}}, "06"+(2,2.5)*{a_{22}}, "06"+(-7,2.5)*{a_{12}}, "07"+(3,2.5)*{a_{10}}, "07"+(-2.5,6)*{a_{00}}, "00"+(9,0)*{\triangle_0}, "07"+(7,12)*{\triangle_1}, "06"+(1,9)*{\triangle_2}, "01"+(30,-10)*{\triangle_3}, "02"+(22,-10)*{\triangle_4}, "04"+(-9,0)*{\triangle_5}, "01"+(0,2)*{v_1}, "07"+(0,-3)*{v_0}, "06"+(0,-3)*{v_2}, "05"+(0,-3)*{v_3}, "02"+(0,2)*{v_4}, "03"+(0,2)*{v_5}
   \ar@{-}"00";"01" \ar@{-}"01";"02" \ar@{-}"02";"03" \ar@{-}"03";"04" \ar@{-}"04";"05" \ar@{-}"05";"06" \ar@{-}"06";"07" \ar@{-}"07";"00" \ar@{-}"01";"07"|{1} \ar@{-}"01";"06"|{2} \ar@{-}"01";"05"|{3} \ar@{-}"02";"05"|{4} \ar@{-}"03";"05"|{5}
  \end{xy}
\]
 Thus $A(T)=\{a_{00},a_{01},a_{10},a_{11},a_{12},a_{20},a_{22},a_{23},a_{30},a_{33},a_{34},a_{43},a_{44},a_{45},a_{53},a_{55}\}.$

 Under the above parametrization of the angles in $T$, the following is clear.

\begin{lemma}\label{A1A2}
 A subset $A$ of $A(T)$ is a perfect matching in the sense of Definition \ref{perangle} if and only if it satisfies the following conditions.\par
(A1) For any $i \in [0,n]$, there is a unique $j \in [0,n]$ such that $a_{ij} \in A$,\par
(A2) For any $j \in [0,n]$, there is a unique $i \in [0,n]$ such that $a_{ij} \in A$.\\
\end{lemma}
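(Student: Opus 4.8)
\textbf{Proof plan for Lemma \ref{A1A2}.}
The statement asserts that the combinatorial description of a perfect matching of angles (one marked angle per vertex, one marked angle per triangle) is equivalent to conditions (A1) and (A2) under the labelling $a_{ij}$. The plan is to unwind the definitions and match them against the index bookkeeping set up in (a)--(c). First I would recall that in the case at hand $Q=Q_T$ is of type $A_n$, so by (a) the triangles are exactly $\triangle_0,\ldots,\triangle_n$, indexed by $[0,n]$, and by (b) the vertices incident to at least one diagonal are exactly $v_0,\ldots,v_n$, again indexed by $[0,n]$; a vertex of $T$ incident to no diagonal carries no angle in $A(T)$ and is irrelevant to both the definition and the conditions. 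The angle $a_{ij}$, when it exists, is the angle of triangle $\triangle_i$ at vertex $v_j$, so ``$a_{ij}\in A$'' simultaneously records ``$\triangle_i$ has its marked angle at $v_j$'' and ``$v_j$ has its marked angle lying in $\triangle_i$.''

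With this dictionary the two directions are immediate. Given $A\in{\mathbb A}(T)$: the requirement that each triangle $\triangle_i$ has exactly one marked angle says that for each $i\in[0,n]$ there is a unique $j$ with $a_{ij}\in A$, which is exactly (A1); the requirement that each vertex $v_j$ (incident to a diagonal) has exactly one marked angle says that for each $j\in[0,n]$ there is a unique $i$ with $a_{ij}\in A$, which is (A2). Conversely, a subset $A\subseteq A(T)$ satisfying (A1) and (A2): (A2) gives exactly one marked angle at each of $v_0,\ldots,v_n$ and hence (since these are all the vertices with an incident diagonal) one marked angle per vertex incident to a diagonal; (A1) gives exactly one marked angle in each triangle. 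So $A$ is a perfect matching of angles.

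The only point requiring a word of care is the bookkeeping of \emph{which} pairs $(i,j)$ actually index an angle, i.e.\ that the quantifiers in (A1), (A2) range over honest elements of $A(T)$: for a fixed $i$, the vertices $v_j$ occurring as corners of $\triangle_i$ are precisely the two or three values of $j$ for which $a_{ij}$ is defined, and dually for fixed $j$. This is exactly the content of (a)--(c), so once that labelling is in place the lemma is a tautology; indeed the excerpt itself flags it as ``clear.'' Thus I expect no real obstacle here — the substance of the lemma is the setup in (a)--(c), and the proof is simply the observation that ``marked angle of $\triangle_i$'' and ``marked angle at $v_j$'' are the two coordinates of the same symbol $a_{ij}$, so ``exactly one per triangle'' and ``exactly one per vertex'' translate verbatim into (A1) and (A2).
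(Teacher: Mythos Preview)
Your proposal is correct, and in fact the paper offers no proof of this lemma at all: it simply declares it ``clear'' under the parametrization (a)--(c) and moves on. Your unwinding of the definitions---identifying (A1) with ``one marked angle per triangle $\triangle_i$'' and (A2) with ``one marked angle per labeled vertex $v_j$,'' together with the observation that $v_0,\ldots,v_n$ are exactly the vertices incident to a diagonal---is precisely the content the paper is leaving implicit, and there is nothing further to compare.
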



 We denote by $G_1$ the set of edges of $G$. Let $A(\overline{G})$ be the set of angles between a diagonal and a side of a {\large $\boxbslash_k$} in $\overline{G}$, and $\overline{\varphi} : A(\overline{G}) \rightarrow G_1$ the map sending $a \in A(\overline{G})$ to the side that is opposite to $a$. Clearly, $\overline{\varphi} : A(\overline{G}) \rightarrow G_1$ is surjective. By definition of the unfolding process (see Subsection \ref{subsectionMS}), there is a canonical surjection $\pi : A(\overline{G}) \rightarrow A(T)$ compatible with the construction of $\overline{G}$.

\begin{lemma}\label{lembij}
 There exists a bijection $\varphi : A(T) \rightarrow G_1$ making the following diagram commutative:
\[ \xymatrix@C=0.3mm@R=7mm{
 & A(\overline{G}) \ar@{->>}[ld]_{\pi} \ar@{->}[rd]^{\overline{\varphi}} & \\
 A(T) \ar@{->}[rr]^{\sim}_{\varphi} & & G_1
} \]
\end{lemma}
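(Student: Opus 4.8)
The plan is to construct $\varphi$ by hand and verify it is well-defined, bijective, and makes the triangle commute; the bulk of the work is checking well-definedness, since $\pi$ identifies several angles of $A(\overline{G})$ and we must see that $\overline{\varphi}$ does not separate them. First I would recall the unfolding process concretely: each triangle $\triangle_i$ of $T$ with two diagonal sides $i, i+1$ and third side a non-diagonal $s_i$ gets ``cut'' along $s_i$, and in $\overline{G}$ the copy of $s_i$ appears twice (once in $\boxbslash_i$ as the ``non-diagonal bottom'' and once in $\boxbslash_{i+1}$ as the ``non-diagonal top'', roughly), while $\triangle_0$ and $\triangle_n$ each contribute a single copy. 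Thus $\pi : A(\overline{G}) \to A(T)$ identifies, for a given angle $a_{ij} \in A(T)$ sitting at an endpoint of the side $s_i$, the two lifts living in the two neighbouring squares. The key local observation is: the \emph{opposite side} of an angle is intrinsic to the triangle containing that angle, and the unfolding never changes which edge is opposite a given angle (it only duplicates the shared edge). Hence $\overline{\varphi}$ is constant on the $\pi$-fibres, which is exactly what is needed.

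Concretely, I would \emph{define} $\varphi := \overline{\varphi} \circ (\text{any section of } \pi)$ and then argue this is independent of the chosen section: given $a \in A(T)$ lying in triangle $\triangle_i$, every lift $\widetilde a \in \pi^{-1}(a)$ lies in a copy of $\triangle_i$ inside some $\boxbslash_k$, and the edge opposite $\widetilde a$ in that copy is always (a copy of) the third side of $\triangle_i$, which is a single edge of $G$ — the one labelled by that boundary/non-frozen arc. So $\overline{\varphi}(\widetilde a)$ depends only on $a$, giving a well-defined map $\varphi : A(T) \to G_1$ with $\varphi \circ \pi = \overline{\varphi}$, i.e. the diagram commutes. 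Surjectivity of $\varphi$ is immediate from surjectivity of $\overline{\varphi}$ together with surjectivity of $\pi$. For injectivity I would count: by construction of $G$ from $T$, the edges of $G$ are in bijection with the ``short'' sides (the third sides) of the triangles, with multiplicity — precisely, each $\triangle_i$ for $i \in [1,n-1]$ contributes the edge $\overline\varphi$ sees from its two diagonal-incident angles, and the two diagonals $i$ give the two further edges; a direct tally shows $|A(T)| = 2(n+1)$ matches $|G_1|$, and since $\varphi$ is a surjection between finite sets of equal cardinality it is a bijection. Alternatively, injectivity can be seen directly: distinct angles of $T$ are opposite distinct edges unless they are the two angles of a single triangle adjacent to the same short side, but in each triangle the two angles incident to the two diagonals are opposite \emph{different} edges (one is opposite the short side, the other is opposite a diagonal), so $\varphi$ separates them.

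The main obstacle I expect is purely bookkeeping: setting up notation for the unfolding precise enough to make ``the opposite edge is unchanged under unfolding'' a rigorous statement rather than a picture-level assertion, and correctly handling the boundary triangles $\triangle_0, \triangle_n$ (which have only one diagonal side, hence contribute their lifts only once to $A(\overline G)$ and only to the single square $\boxbslash_1$, resp.\ $\boxbslash_n$). It may be cleanest to organise the verification square-by-square: for each $k \in [1,n]$ describe $\boxbslash_k$ together with the labels of its four sides and two diagonals, list the angles of $A(\overline G)$ it carries and their images under $\overline\varphi$ and $\pi$, and then glue. Once the local picture in a single $\boxbslash_k$ is pinned down, commutativity and bijectivity of $\varphi$ follow by inspection, and no nontrivial computation remains.
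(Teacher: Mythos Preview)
Your overall strategy---show that $\overline{\varphi}$ is constant on $\pi$-fibres and then conclude by a cardinality count---is sound and genuinely different from the paper's argument, which instead builds $\varphi$ by induction on $n$: the base case $n=1$ identifies $\overline{G}$ with $T$, and the inductive step extends $\varphi : A(T^{[1,n-1]}) \to G_1^{[1,n-1]}$ by the three new angles $a_{n-1,n}, a_{nn}, a_{nk}$ and the three new edges of $\square_n$. The inductive construction has the advantage of dovetailing with the later induction in Proposition~\ref{PMPM}, whereas your direct approach isolates the essential geometric fact: the only angles of $A(T)$ with two lifts are the ``apex'' angles $a_{ij}$ where $v_j$ is the common endpoint of diagonals $i$ and $i+1$, and both lifts are opposite the \emph{shared} edge $s_i$ between $\square_i$ and $\square_{i+1}$. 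This is cleaner for the lemma in isolation.

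That said, your write-up has some genuine errors you should fix. First, the count $|A(T)| = 2(n+1)$ is wrong: each interior triangle $\triangle_i$ ($1\le i\le n-1$) has all three vertices labelled, while $\triangle_0$ and $\triangle_n$ have two each, giving $|A(T)| = 3n+1 = |G_1|$ (the latter since $n$ squares with $n-1$ shared edges give $4n-(n-1)$ edges). Second, your well-definedness paragraph asserts that the edge opposite \emph{any} lift $\widetilde a$ is ``the third side of $\triangle_i$, which is a single edge of $G$''; this is false for non-apex angles, whose opposite side is a diagonal of $T$ and which appears as \emph{two distinct} edges of $G$ (e.g.\ arc $2$ labels two different edges in the $n=3$ example). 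The statement is only needed---and only true---for apex angles, since those are precisely the angles with more than one lift; you should say this explicitly. Finally, your alternative injectivity argument (``distinct angles are opposite distinct edges unless\dots'') does not work as stated for the same reason: two angles in different triangles can be opposite the same arc of $T$ yet land on different edges of $G_1$. The cleanest fix is to observe that $\overline{\varphi}$ and $\pi$ have \emph{identical} fibres (each shared edge $s_i$ and each apex angle $a_{ij}$ contribute the unique fibre of size $2$), so the induced map is automatically a bijection.
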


 Before proving Lemma \ref{lembij}, we give simple examples.

\begin{example}\label{exmaps}
 ($n=1$) For the case $Q=[ 1 ]$, we have a natural identification of $\overline{G}$ with $T$.
\\
 ($n=2$) For the case $Q=[ 1 \rightarrow 2 ]$, the following diagonals show that there is a bijection $\varphi : A(T) \rightarrow G_1$.
\[
  \begin{xy}
   (0,0)="0", +(20,0)="1", +(0,-20)="2", +(-20,0)="3", "0"+(3,-6)*{a_{01}}, "0"+(6,-2.5)*{a_{11}}, "2"+(-2.5,6)*{a_{10}}, "2"+(-6,2)*{a_{00}}, "0"+(0,8)*{\underline{n=1}}, "0"+(-12,0)*{T = \overline{G}}, "0"+(-2,2)*{v_{1}}, "2"+(2,-2)*{v_0}
   \ar@{-}"0";"1"^{\varphi(a_{10})} \ar@{-}"1";"2"|{\varphi(a_{11})} \ar@{-}"2";"3"^{\varphi(a_{01})} \ar@{-}"3";"0"|{\varphi(a_{00})} \ar@{-}"0";"2"|{1}
  \end{xy}\hspace{15mm}
  \begin{xy}
   (0,-5)="0", +(15,11)="1", +(15,-11)="2", +(-6,-17.5)="3", +(-18,0)="4", "1"+(-5,-7)*{a_{01}}, "1"+(0,-10)*{a_{11}}, "1"+(5,-7)*{a_{21}}, "4"+(0,9)*{a_{00}}, "4"+(4,2)*{a_{10}}, "3"+(-4,2)*{a_{12}}, "3"+(0,9)*{a_{22}}, "1"+(0,2)*{v_1}, "4"+(0,-2)*{v_0}, "3"+(0,-2)*{v_2}, "0"+(-3,5)*{T}, "2"+(0,13)*{\underline{n=2}}
   \ar@{-}"0";"1" \ar@{-}"1";"2" \ar@{-}"2";"3" \ar@{-}"3";"4" \ar@{-}"4";"0" \ar@{-}"1";"4"|{1} \ar@{-}"1";"3"|{2}
  \end{xy}
  \begin{xy}
   (0,0)="0", +(20,0)="1", +(0,-20)="2", +(-20,0)="3", "1"+(20,0)="4", +(0,-20)="5", "0"+(3,-6)*{a_{01}}, "2"+(-2.5,6)*{a_{10}}, "1"+(6,-2.5)*{a_{22}}, "0"+(6,-2.5)*{a_{11}}, "2"+(-6,2)*{a_{00}}, "1"+(3,-6)*{a_{12}}, "5"+(-2.5,6)*{a_{21}}, "5"+(-6,2)*{a_{11}}, "0"+(-5,0)*{\overline{G}}
   \ar@{-}"0";"1"^{\varphi(a_{10})} \ar@{-}"1";"2"|{\varphi(a_{11})} \ar@{-}"2";"3"^{\varphi(a_{01})} \ar@{-}"3";"0"|{\varphi(a_{00})} \ar@{-}"1";"4"^{\varphi(a_{21})} \ar@{-}"4";"5"|{\varphi(a_{22})} \ar@{-}"5";"2"^{\varphi(a_{12})} \ar@{-}"0";"2"|{1} \ar@{-}"1";"5"|{2}
  \end{xy}
\]
 ($n=3$) For the case $Q=[1 \rightarrow 2 \rightarrow 3]$, the following diagonals show that there is a bijection $\varphi : A(T) \rightarrow G_1$.
\[
  \begin{xy}
   (0,0)="0", +(17.5,10)="1", +(17.5,-10)="2", +(0,-20)="3", +(-17.5,-10)="4", +(-17.5,10)="5", "0"+(-7,0)*{T}, "2"+(10,-10)*{}, "1"+(-7,-7)*{a_{01}}, "1"+(-3,-10)*{a_{11}}, "1"+(3,-10)*{a_{21}}, "1"+(7,-7)*{a_{31}}, "3"+(-2.5,10)*{a_{33}}, "3"+(-5,1)*{a_{23}}, "4"+(4,5)*{a_{22}}, "4"+(-4,5)*{a_{12}}, "5"+(5,1)*{a_{10}}, "5"+(2.5,10)*{a_{00}}, "1"+(0,2)*{v_1}, "5"+(-2,-2)*{v_0}, "4"+(0,-2)*{v_2}, "3"+(2,-2)*{v_3}
   \ar@{-}"0";"1"^{4} \ar@{-}"1";"2"^{9} \ar@{-}"2";"3"^{8} \ar@{-}"3";"4"^{7} \ar@{-}"4";"5"^{6} \ar@{-}"5";"0"^{5} \ar@{-}"1";"5"|{1} \ar@{-}"1";"4"|{2} \ar@{-}"1";"3"|{3} 
  \end{xy}\hspace{5mm}
  \begin{xy}
   (0,-10)="0", +(20,0)="1", +(0,-20)="2", +(-20,0)="3", "1"+(20,0)="4", +(0,-20)="5", "1"+(0,20)="6", +(20,0)="7", "0"+(3,-6)*{a_{01}}, "0"+(6,-2.5)*{a_{11}}, "2"+(-2.5,6)*{a_{10}}, "2"+(-6,2)*{a_{00}}, "1"+(3,-6)*{a_{12}}, "1"+(6,-3)*{a_{22}}, "5"+(-2.5,6)*{a_{21}}, "5"+(-6,2)*{a_{11}}, "4"+(-6,3)*{a_{23}}, "4"+(-2.5,6)*{a_{33}}, "6"+(6,-2.5)*{a_{31}}, "6"+(3,-6)*{a_{21}}, "0"+(2,10)*{\overline{G}}
   \ar@{-}"0";"1"^{\varphi(a_{10})=2} \ar@{-}"1";"2"|{\varphi(a_{11})=6} \ar@{-}"2";"3"^{\varphi(a_{01})=5} \ar@{-}"3";"0"^{\varphi(a_{00})=4} \ar@{-}"1";"4"|{\varphi(a_{21})=7} \ar@{-}"4";"5"^{\varphi(a_{22})=3} \ar@{-}"5";"2"^{\varphi(a_{12})=1} \ar@{-}"1";"6"^{\varphi(a_{23})=2} \ar@{-}"6";"7"^{\varphi(a_{33})=9} \ar@{-}"7";"4"^{\varphi(a_{31})=8} \ar@{-}"0";"2"|{1} \ar@{-}"1";"5"|{2} \ar@{-}"6";"4"|{3}
  \end{xy}
\]
\end{example}

\begin{proof}[Proof of Lemma \ref{lembij}]
 We construct $\varphi$ by induction on $n$. For $n=1$, we have a natural identification of $\overline{G}$ with $T$ which gives the desired map. Assume that we constructed a bijection $\varphi : A(T^{[1,n-1]}) \rightarrow G^{[1,n-1]}_1$ making the following diagram commutative:
\[ \xymatrix@!=5mm{
 & A(\overline{G}^{[1,n-1]}) \ar@{->>}[ld]_{\pi} \ar@{->}[rd]^{\overline{\varphi}} & \\
 A(T^{[1,n-1]}) \ar@{->}[rr]^{\sim}_{\varphi} & & G^{[1,n-1]}_1
} \]
 Then $A(T)$ and $G_1$ have $3$ additional elements colored in red:
\[
  \begin{xy}
   (0,0)="0", +(5,0)="1", +(25,0)="2", +(17,-12)="3", +(-17,-12)="4", +(-30,0)="5", "2"+(0,2)*{v_n}, "4"+(0,-3)*{v_k}, "2"+(-5.5,-3)*{{\color{red} a_{n-1,n}}}, "2"+(3.5,-5)*{{\color{red} a_{nn}}}, "4"+(3.5,6)*{{\color{red} a_{nk}}}, "4"+(-5.3,6)*{a_{n-1,k}}, "0"+(-5,0)*{T}
  \ar@{-}"0";"1" \ar@{-}"1";"2"^{a} \ar@{-}"2";"3"^{b} \ar@{-}"3";"4"^{c} \ar@{-}"4";"5" \ar@{-}"1";"4"|{n-1} \ar@{-}"2";"4"|{n}
  \end{xy}
 \hspace{15mm}
  \begin{xy}
   (0,0)="0", +(20,0)="1", +(0,-20)="2", +(-20,0)="3", "1"+(20,0)="4", +(0,-20)="5", "1"+(1,-7)*{{\color{red} a_{n-1,n}}}, "1"+(7,-3)*{{\color{red} a_{nn}}}, "5"+(-3,7)*{{\color{red} a_{nk}}}, "5"+(-7.5,1.5)*{a_{n-1,k}}, "0"+(8.7,-3)*{a_{n-1,k}}, "0"+(-5,0)*{\overline{G}}
   \ar@{-}"0";"1"^{n} \ar@{-}"1";"2"|{a} \ar@{-}"2";"3" \ar@{-}"3";"0" {\color{red} \ar@{-}"1";"4"^{b}}{\color{red} \ar@{-}"4";"5"^{c}}{\color{red} \ar@{-}"5";"2"^{n-1}}\ar@{-}"0";"2"|{n-1} \ar@{-}"1";"5"|{n}
  \end{xy}
\]
 We extend the map $\varphi : A(T^{[1,n-1]}) \rightarrow G^{[1,n-1]}_1$ to $\varphi : A(T) \rightarrow G_1$ as in the following diagram:
\[
\begin{xy}
   (0,22)="0", +(20,0)="1", +(0,-20)="2", +(-20,0)="3", "1"+(20,0)="4", +(0,-20)="5", "1"+(1,-7)*{a_{n-1,n}}, "1"+(7,-3)*{a_{nn}}, "5"+(-3,7)*{a_{nk}}, "5"+(-7.5,1.5)*{a_{n-1,k}}, "0"+(-5,0)*{\overline{G}}
   \ar@{-}"0";"1" \ar@{-}"1";"2" \ar@{-}"2";"3" \ar@{-}"3";"0" \ar@{-}"1";"4"^{\varphi(a_{nk})} \ar@{-}"4";"5"^{\varphi(a_{nn})} \ar@{-}"5";"2"^{\varphi(a_{n-1,n})} \ar@{-}"0";"2"|{n-1} \ar@{-}"1";"5"|{n}
  \end{xy} \qedhere
\]
\end{proof}


 The following is a key proposition to show Theorem \ref{angle}.

\begin{proposition}\label{PMPM}
 The map $\varphi$ of Lemma \ref{lembij} induces a bijection $\varphi : {\mathbb A}(T) \rightarrow {\mathbb P}(G)$.
\end{proposition}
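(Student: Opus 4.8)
The plan is to promote the bijection $\varphi : A(T) \to G_1$ from Lemma~\ref{lembij} to a bijection between perfect matchings of angles in $T$ and perfect matchings of the graph $G$. First I would set up the combinatorial dictionary: by Lemma~\ref{A1A2} a perfect matching of angles $A$ is a subset of $A(T)$ meeting each triangle index $i \in [0,n]$ exactly once and each vertex index $j \in [0,n]$ exactly once, i.e.\ it is precisely (the graph of) a bijection $\sigma : [0,n] \to [0,n]$ with $a_{i,\sigma(i)} \in A$. On the other side, a perfect matching $P$ of $G$ is a subset of $G_1$ covering each vertex of $G$ exactly once; since $G$ is the graph obtained from $\overline{G}$ by deleting the diagonal of each square $\square_k$, it is a ``snake'' (zig-zag strip) of $n$ squares, and its perfect matchings are a classical object. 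The key point will be to show that $\varphi$ carries the $i$-th triangle / $j$-th vertex incidence data of $T$ onto the covering data of $G$, so that $\varphi(A)$ is a perfect matching of $G$ iff $A$ is a perfect matching of angles.

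The main work is to track, under the inductive construction of $\varphi$ in the proof of Lemma~\ref{lembij}, which edges of $G$ are ``opposite'' to angles sharing a given triangle and which share a given vertex. Concretely I would argue: for each triangle $\triangle_i$ of $T$, the three angles $a_{ij}$ (for the three vertices $j$ incident to $\triangle_i$) map under $\varphi$ to the three sides of $\triangle_i$, which — after unfolding — form a triangle in $\overline{G}$ whose non-diagonal sides are exactly two adjacent edges of $G$ bounding the square $\square_i$ (when $1 \le i \le n-1$, both squares $\square_i$ and $\square_{i+1}$ appear; for $i=0$ or $i=n$ there is a single boundary square). Thus a selection of one angle per triangle corresponds to a selection of one "rung-or-side" edge per square in a way that is consistent along the strip. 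Dually, the angles at a fixed vertex $v_j$ of $T$ fan out in the unfolding to a set of edges of $G$ all incident to one particular vertex of $G$, and condition (A2) becomes exactly the condition that each vertex of $G$ is covered once. I would make these two correspondences precise by induction on $n$, reading them off directly from the three red edges added at each step in the Lemma~\ref{lembij} diagram: the new triangle $\triangle_n$ contributes the new square $\square_n$, its three angles $a_{nk}, a_{nn}, a_{n-1,n}$ map to $\varphi(a_{nk}), \varphi(a_{nn}), \varphi(a_{n-1,n})$, and one checks directly that the resulting incidence pattern of these three edges in $G$ matches the abstract incidence pattern of the triangle $\triangle_n$'s angles (one angle at $v_n$, one at $v_k$, one shared with the previous triangle), while all other incidences are unchanged from the $(n-1)$ case.

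Granting the dictionary, the proof concludes as follows: $\varphi$ is a bijection $A(T) \to G_1$ by Lemma~\ref{lembij}, so it suffices to see that for $A \subseteq A(T)$ the image $\varphi(A)$ covers every vertex of $G$ exactly once if and only if $A$ satisfies (A1) and (A2). The "triangle" incidences of $A(T)$ match half of the vertex incidences of $G$ and the "vertex" incidences of $A(T)$ match the other half — more precisely, the vertices of $G$ come in two families indexed by the triangles and by the vertices of $T$ (with the boundary identifications from unfolding), and $\varphi(a_{ij})$ is incident in $G$ exactly to the $G$-vertex of the $i$-th triangle and the $G$-vertex of the $j$-th vertex. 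Under this, $|A \cap \varphi^{-1}(\text{edges at triangle-vertex } i)| = |\{j : a_{ij} \in A\}|$ and similarly for vertex-vertices, so $\varphi(A) \in {\mathbb P}(G)$ is equivalent to (A1) and (A2), hence to $A \in {\mathbb A}(T)$ by Lemma~\ref{A1A2}. Injectivity and surjectivity of the induced map are immediate since $\varphi$ is already a bijection on the level of edges/angles.

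I expect the main obstacle to be bookkeeping rather than conceptual: carefully identifying the vertex set of $G$ with the disjoint union of triangle-labels and vertex-labels of $T$ modulo the gluings created by the unfolding, and verifying that $\varphi$ respects this identification uniformly for the boundary squares $\square_0$ (equivalently the square containing diagonal $1$) and $\square_n$ as well as the interior ones. Getting the base cases $n=1,2,3$ explicitly right (as in Example~\ref{exmaps}) and then checking that adding the three red edges preserves the incidence dictionary is the crux; once that is in place the equivalence "(perfect matching of $G$) $\Leftrightarrow$ (A1)+(A2)" is formal.
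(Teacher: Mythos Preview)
Your proposed dictionary does not hold for the map $\varphi$ of Lemma~\ref{lembij}. Take the paper's $n=3$ example ($Q=[1\to 2\to 3]$, Example~\ref{exmaps}): the vertex of $G$ shared by $\square_1,\square_2,\square_3$ is incident to the four edges $\varphi(a_{10}),\varphi(a_{11}),\varphi(a_{21}),\varphi(a_{23})$. The first indices are $\{1,1,2,2\}$ and the second indices are $\{0,1,1,3\}$, so this $G$-vertex corresponds neither to a single triangle $\triangle_i$ nor to a single polygon vertex $v_j$. Hence there is no labeling of the vertices of $G$ by ``triangle-vertices'' and ``polygon-vertices'' under which $\varphi(a_{ij})$ joins the $i$-th triangle to the $j$-th polygon vertex, and your translation of (A1)+(A2) into ``each $G$-vertex is covered once'' breaks down. (It is true that $G$ is abstractly isomorphic to the bipartite incidence graph of angles, but $\varphi$ is not that isomorphism; e.g.\ in the same example $\varphi(a_{10})$ and $\varphi(a_{01})$ are swapped relative to it.) The related claim that the three angles of $\triangle_i$ map under $\varphi$ to the three sides of a single triangle in $\overline{G}$ is likewise false: $\varphi(a_{10}),\varphi(a_{11}),\varphi(a_{12})$ are the edges $0\!-\!1$, $1\!-\!2$, $5\!-\!2$, which do not bound a triangle.

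The paper's proof avoids any such global dictionary. It argues by induction on $n$, splitting $\mathbb{A}(T)=\mathbb{A}'(T)\sqcup\mathbb{A}''(T)$ according to whether $a_{nn}$ is selected, and in parallel splitting $\mathbb{P}(G)\simeq\mathbb{P}(G^{[1,n-1]})\sqcup\mathbb{P}(G^{[1,n-k-1]})$ according to which edges of the last square $\square_n$ are used; here $k$ is determined by the ``type'' of $T$ (the position of the first straight triple of squares from the end). A short case analysis then shows that $\varphi$ matches the two decompositions, and the inductive hypothesis finishes the argument. If you want to rescue your strategy, you would first have to produce a \emph{different} bijection $A(T)\to G_1$ with the bipartite property and then separately argue that it induces the same map on matchings as $\varphi$; but that is a different (and longer) proof than the one you outlined.
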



 To prove Proposition \ref{PMPM}, we consider the set of all perfect matchings of $G$.

 (1) If $n=1$, the following is a complete list of ${\mathbb P}(G)$:
\[
  \begin{xy}
   (0,0)="0", +(10,0)="1", +(0,-10)="2", +(-10,0)="3"
   {\color{red}\ar@{-}"0";"1"^{\varphi(a_{10})}}\ar@{.}"1";"2" {\color{red}\ar@{-}"2";"3"^{\varphi(a_{01})}}\ar@{.}"3";"0" \ar@{}"0";"2"|{\square_1}
  \end{xy}\hspace{20mm}
  \begin{xy}
   (0,0)="0", +(10,0)="1", +(0,-10)="2", +(-10,0)="3"
   \ar@{.}"0";"1" {\color{red}\ar@{-}"1";"2"}\ar@{.}"2";"3" {\color{red}\ar@{-}"3";"0"}\ar@{}"0";"2"|{\square_1}
  \end{xy}
\]\vspace{2mm}\\
 Then the map $\varphi : {\mathbb A}(T) \rightarrow G_1$ of Lemma \ref{lembij} induces a bijection $\varphi : {\mathbb A}(T) \rightarrow {\mathbb P}(G)$ by Lemma \ref{A1A2} (see Example \ref{exmaps}).

 (2) If $n=2$, the following is a complete list of ${\mathbb P}(G)$:
\[
  \begin{xy}
   (0,0)="0", +(13,0)="1", +(0,-12)="2", +(-13,0)="3", "1"+(13,0)="4", +(0,-12)="5"
   {\color{red}\ar@{-}"0";"1"^{\varphi(a_{10})}}\ar@{.}"1";"2" {\color{red}\ar@{-}"2";"3"^{\varphi(a_{01})}}\ar@{.}"3";"0" \ar@{.}"1";"4" {\color{red}\ar@{-}"4";"5"|{\varphi(a_{22})}}\ar@{.}"5";"2" \ar@{}"0";"2"|{\square_1} \ar@{}"1";"5"|{\square_2}
  \end{xy}
  \hspace{15mm}\begin{xy}
   (0,0)="0", +(13,0)="1", +(0,-12)="2", +(-13,0)="3", "1"+(13,0)="4", +(0,-12)="5"
   \ar@{.}"0";"1" {\color{red}\ar@{-}"1";"2"|{\varphi(a_{11})}}\ar@{.}"2";"3"{\color{red}\ar@{-}"3";"0"|{\varphi(a_{00})}}\ar@{.}"1";"4" {\color{red}\ar@{-}"4";"5"|{\varphi(a_{22})}}\ar@{.}"5";"2" \ar@{}"0";"2"|{\square_1} \ar@{}"1";"5"|{\square_2}
  \end{xy}
  \hspace{15mm}\begin{xy}
   (0,0)="0", +(13,0)="1", +(0,-12)="2", +(-13,0)="3", "1"+(13,0)="4", +(0,-12)="5"
   \ar@{.}"0";"1" \ar@{.}"1";"2" \ar@{.}"2";"3" {\color{red}\ar@{-}"3";"0"|{\varphi(a_{00})}}{\color{red}\ar@{-}"1";"4"^{\varphi(a_{21})}}\ar@{.}"4";"5" {\color{red}\ar@{-}"5";"2"^{\varphi(a_{12})}}\ar@{}"0";"2"|{\square_1} \ar@{}"1";"5"|{\square_2}
  \end{xy}
\]\vspace{2mm}\\
 Then the map $\varphi : {\mathbb A}(T) \rightarrow G_1$ of Lemma \ref{lembij} induces a bijection $\varphi : {\mathbb A}(T) \rightarrow {\mathbb P}(G)$ by Lemma \ref{A1A2} (see Example \ref{exmaps}).

 (3) If $n \ge 3$, there are the following cases. A triple $(\square_{i-1},\square_{i},\square_{i+1})$ is called {\it straight} if its squares lie in one column or one row, and {\it zigzag} if not. Triangulations of $(n+3)$-gon are divided into the following $n-1$ types.\\
$\bullet$ Type $k$ $(1 \le k \le n-1)$ : If $v_{n-k}$ is incident to the diagonal $n$ in $T$, $(\square_{n-k-1},\square_{n-k},\square_{n-k+1})$ is straight and $(\square_{n-i-1},\square_{n-i},\square_{n-i+1})$ is zigzag for any $i \in [1,k-1]$.
\[
  \begin{xy}
   (0,0)="0", +(5,0)="1", +(20,0)="2", +(10,0)="3", +(10,-8)="4", +(-10,-8)="5", +(-20,0)="6", +(-15,0)="7", "2"+(-3,-4)*{\cdots}, "2"+(0,3)*{v_{n-1}}, "3"+(0,3)*{v_{n}}, "5"+(0,-3)*{v_{n-k}}, "0"+(-5,0)*{T}
   \ar@{-}"0";"3" \ar@{-}"3";"4" \ar@{-}"4";"5" \ar@{-}"5";"7" \ar@{-}"1";"6"|{n-k-1} \ar@{-}"1";"5"|{n-k} \ar@{-}"2";"5"|{n-1} \ar@{-}"3";"5"|{n}
  \end{xy}\hspace{5mm}
  \begin{xy}
   (0,-15)="0", +(11,0)="1", +(11,0)="2", +(0,-11)="3", +(-11,0)="4", +(-11,0)="5", "1"+(0,11)="6", +(11,0)="7", "0"+(-11,0)="8", +(-5,-5)*{\cdots}, "5"+(-11,0)="9", "7"+(7,3)="00", +(11,0)="01", +(0,11)="02", +(-11,0)="03", "02"+(11,0)="04", +(0,-11)="05", "00"+(0,-11)="06", +(11,0)="07", "6"+(-10,5)*{G}, "7"+(4,-3)*{\rotatebox{10}{$\cdots$}}
   \ar@{-}"0";"8" \ar@{-}"5";"9" \ar@{-}"0";"5" \ar@{-}"0";"1" \ar@{-}"1";"2" \ar@{-}"2";"3" \ar@{-}"3";"4"^{\varphi(a_{n-k,n-k+1})} \ar@{-}"4";"5" \ar@{-}"1";"4" \ar@{-}"1";"6"^{\varphi(a_{n-k+1,n-k+2})} \ar@{-}"6";"7" \ar@{-}"7";"2" \ar@{-}"00";"01" \ar@{-}"01";"02" \ar@{-}"02";"03" \ar@{-}"03";"00"_{\varphi(a_{n-2,n-1})} \ar@{-}"02";"04"^{\varphi(a_{n,n-k})} \ar@{-}"04";"05"^{\varphi(a_{nn})} \ar@{-}"01";"05"_(.7){\varphi(a_{n-1,n})} \ar@{-}"00";"06" \ar@{-}"06";"07"_{\varphi(a_{n-3,n-2})} \ar@{-}"07";"01" \ar@{}"03";"01"|{\square_{n-1}} \ar@{}"02";"05"|{\square_{n}} \ar@{}"00";"07"|{\square_{n-2}} \ar@{}"0";"4"|{\square_{n-k}} \ar@{}"1";"3"|{\square_{n-k+1}} \ar@{}"6";"2"|{\square_{n-k+2}} \ar@{}"8";"5"|{\square_{n-k-1}}
  \end{xy}
\]
 Then there is a natural bijection ${\mathbb P}\bigl(G^{[1,n-1]}\bigr) \sqcup {\mathbb P}\bigl(G^{[1,n-k-1]}\bigr) \rightarrow {\mathbb P}(G)$ explained in the following diagram:
\[
  \begin{xy}
   (0,0)="0", +(11,0)="1", +(11,0)="2", +(0,-11)="3", +(-11,0)="4", +(-11,0)="5", "1"+(0,11)="6", +(11,0)="7", "0"+(-11,0)="8", +(-5,-5)*{\cdots}, "5"+(-11,0)="9", "7"+(7,3)="00", +(11,0)="01", +(0,11)="02", +(-11,0)="03", "02"+(11,0)="04", +(0,-11)="05", "00"+(0,-11)="06", +(11,0)="07", "6"+(0,3)*{G^{[1,n-1]}}, "7"+(4,-3)*{\rotatebox{10}{$\cdots$}}
   \ar@{-}"0";"8" \ar@{-}"5";"9" \ar@{-}"0";"5" \ar@{-}"0";"1" \ar@{-}"1";"2" \ar@{-}"2";"3" \ar@{-}"3";"4" \ar@{-}"4";"5" \ar@{-}"1";"4" \ar@{-}"1";"6" \ar@{-}"6";"7" \ar@{-}"7";"2" \ar@{-}"00";"01" \ar@{-}"01";"02" \ar@{-}"02";"03" \ar@{-}"03";"00" \ar@{.}"02";"04" {\color{red}\ar@{-}"04";"05"}\ar@{.}"01";"05" \ar@{-}"00";"06" \ar@{-}"06";"07" \ar@{-}"07";"01" \ar@{}"03";"01"|{\square_{n-1}} \ar@{}"02";"05"|{\square_{n}} \ar@{}"00";"07"|{\square_{n-2}} \ar@{}"0";"4"|{\square_{n-k}} \ar@{}"1";"3"|{\square_{n-k+1}} \ar@{}"6";"2"|{\square_{n-k+2}} \ar@{}"8";"5"|{\square_{n-k-1}}
  \end{xy}\hspace{5mm}
  \begin{xy}
   (0,0)="0", +(11,0)="1", +(11,0)="2", +(0,-11)="3", +(-11,0)="4", +(-11,0)="5", "1"+(0,11)="6", +(11,0)="7", "0"+(-11,0)="8", +(-5,-5)*{\cdots}, "5"+(-11,0)="9", "7"+(7,3)="00", +(11,0)="01", +(0,11)="02", +(-11,0)="03", "02"+(11,0)="04", +(0,-11)="05", "00"+(0,-11)="06", +(11,0)="07", "8"+(0,3)*{G^{[1,n-k-1]}}, "7"+(4,-3)*{\rotatebox{10}{$\cdots$}}
   \ar@{-}"0";"8" \ar@{-}"5";"9" \ar@{-}"0";"5" \ar@{.}"0";"1" \ar@{.}"1";"2" \ar@{.}"2";"3" {\color{red}\ar@{-}"3";"4"}\ar@{.}"4";"5" \ar@{.}"1";"4" {\color{red}\ar@{-}"1";"6"}\ar@{.}"6";"7" \ar@{.}"7";"2" \ar@{.}"00";"01" \ar@{.}"01";"02" \ar@{.}"02";"03" {\color{red}\ar@{-}"03";"00"}{\color{red}\ar@{-}"02";"04"}\ar@{.}"04";"05" {\color{red}\ar@{-}"01";"05"}\ar@{.}"00";"06"{\color{red}\ar@{-}"06";"07"}\ar@{.}"07";"01" \ar@{}"03";"01"|{\square_{n-1}} \ar@{}"02";"05"|{\square_{n}} \ar@{}"00";"07"|{\square_{n-2}} \ar@{}"0";"4"|{\square_{n-k}} \ar@{}"1";"3"|{\square_{n-k+1}} \ar@{}"6";"2"|{\square_{n-k+2}} \ar@{}"8";"5"|{\square_{n-k-1}}
  \end{xy}
\]
 Note that, in type $n-1$, $(\square_{i-1},\square_{i},\square_{i+1})$ is zigzag for any $i \in [2,n-1]$.
\[
  \begin{xy}
   (0,0)="0", +(5,8)="1", +(15,5)="2", +(10,-5)="3", +(5,-8)="4", +(-5,-8)="5", +(-25,0)="6", "2"+(-3,-7)*{\rotatebox{20}{$\cdots$}}, "0"+(-3,0)*{v_0}, "1"+(-2.5,2.5)*{v_2}, "2"+(0,3)*{v_{n-1}}, "3"+(0,3)*{v_{n}}, "5"+(0,-3)*{v_1}, "0"+(-5,10)*{T}
   \ar@{-}"1";"2" \ar@{-}"2";"3" \ar@{-}"3";"4" \ar@{-}"4";"5" \ar@{-}"5";"6" \ar@{-}"0";"1" \ar@{-}"0";"6" \ar@{-}"0";"5"|{1} \ar@{-}"1";"5"|{2} \ar@{-}"2";"5"|{n-1} \ar@{-}"3";"5"|{n}
  \end{xy}\hspace{15mm}
  \begin{xy}
   (0,-5)="0", +(10,0)="1", +(10,0)="2", +(0,-10)="3", +(-10,0)="4", +(-10,0)="5", "1"+(0,10)="6", +(10,0)="7", "7"+(7,3)="00", +(10,0)="01", +(0,10)="02", +(-10,0)="03", "02"+(10,0)="04", +(0,-10)="05", "00"+(0,-10)="06", +(10,0)="07", "6"+(-5,5)*{G}, "7"+(4,-3)*{\rotatebox{10}{$\cdots$}}
   \ar@{-}"0";"5" \ar@{-}"0";"1" \ar@{-}"1";"2" \ar@{-}"2";"3" \ar@{-}"3";"4" \ar@{-}"4";"5" \ar@{-}"1";"4" \ar@{-}"1";"6" \ar@{-}"6";"7" \ar@{-}"7";"2" \ar@{-}"00";"01" \ar@{-}"01";"02" \ar@{-}"02";"03" \ar@{-}"03";"00" \ar@{-}"02";"04" \ar@{-}"04";"05" \ar@{-}"01";"05" \ar@{-}"00";"06" \ar@{-}"06";"07" \ar@{-}"07";"01" \ar@{}"03";"01"|{\square_{n-1}} \ar@{}"02";"05"|{\square_{n}} \ar@{}"00";"07"|{\square_{n-2}} \ar@{}"0";"4"|{\square_{1}} \ar@{}"1";"3"|{\square_{2}} \ar@{}"6";"2"|{\square_{3}}
  \end{xy}
\]
 In this case, we define $G^{[1,0]}$ to be the graph consisting of the edge of $\square_1$ not incident to $\square_2$. Then there is also a natural bijection ${\mathbb P}\bigl(G^{[1,n-1]}\bigr) \sqcup {\mathbb P}\bigl(G^{[1,0]}\bigr) \rightarrow {\mathbb P}(G)$ explained in the following diagram:
\[
  \begin{xy}
   (0,0)="0", +(10,0)="1", +(10,0)="2", +(0,-10)="3", +(-10,0)="4", +(-10,0)="5", "1"+(0,10)="6", +(10,0)="7", "7"+(7,3)="00", +(10,0)="01", +(0,10)="02", +(-10,0)="03", "02"+(10,0)="04", +(0,-10)="05", "00"+(0,-10)="06", +(10,0)="07", "6"+(5,4)*{G^{[1,n-1]}}, "7"+(4,-3)*{\rotatebox{10}{$\cdots$}}
   \ar@{-}"0";"5" \ar@{-}"0";"1" \ar@{-}"1";"2" \ar@{-}"2";"3" \ar@{-}"3";"4" \ar@{-}"4";"5" \ar@{-}"1";"4" \ar@{-}"1";"6" \ar@{-}"6";"7" \ar@{-}"7";"2" \ar@{-}"00";"01" \ar@{-}"01";"02" \ar@{-}"02";"03" \ar@{-}"03";"00" \ar@{.}"02";"04" {\color{red}\ar@{-}"04";"05"}\ar@{.}"01";"05" \ar@{-}"00";"06" \ar@{-}"06";"07" \ar@{-}"07";"01" \ar@{}"03";"01"|{\square_{n-1}} \ar@{}"02";"05"|{\square_{n}} \ar@{}"00";"07"|{\square_{n-2}} \ar@{}"0";"4"|{\square_{1}} \ar@{}"1";"3"|{\square_{2}} \ar@{}"6";"2"|{\square_{3}}
  \end{xy}
  \hspace{15mm}
  \begin{xy}
   (0,0)="0", +(10,0)="1", +(10,0)="2", +(0,-10)="3", +(-10,0)="4", +(-10,0)="5", "1"+(0,10)="6", +(10,0)="7", "7"+(7,3)="00", +(10,0)="01", +(0,10)="02", +(-10,0)="03", "02"+(10,0)="04", +(0,-10)="05", "00"+(0,-10)="06", +(10,0)="07", "0"+(-5,-5)*{G^{[1,0]}}, "7"+(4,-3)*{\rotatebox{10}{$\cdots$}}
   {\color{red}\ar@{-}"0";"5"}\ar@{.}"0";"1" \ar@{.}"1";"2" \ar@{.}"2";"3" {\color{red}\ar@{-}"3";"4"}\ar@{.}"4";"5" \ar@{.}"1";"4" {\color{red}\ar@{-}"1";"6"}\ar@{.}"6";"7" \ar@{.}"7";"2" \ar@{.}"00";"01" \ar@{.}"01";"02" \ar@{.}"02";"03" {\color{red}\ar@{-}"03";"00"}{\color{red}\ar@{-}"02";"04"}\ar@{.}"04";"05" {\color{red}\ar@{-}"01";"05"}\ar@{.}"00";"06" {\color{red}\ar@{-}"06";"07"}\ar@{.}"07";"01" \ar@{}"03";"01"|{\square_{n-1}} \ar@{}"02";"05"|{\square_n} \ar@{}"00";"07"|{\square_{n-2}} \ar@{}"0";"4"|{\square_{1}} \ar@{}"1";"3"|{\square_{2}} \ar@{}"6";"2"|{\square_{3}}
  \end{xy}
\]


\begin{proof}[Proof of Proposition \ref{PMPM}]
 We prove it by induction on $n$. For $n=1, 2$, the assertion follows from the above observations (1) and (2). Assume $n \ge 3$. Then ${\mathbb A}(T)$ is written as a disjoint union ${\mathbb A}(T)={\mathbb A}'(T) \sqcup {\mathbb A}''(T)$, where ${\mathbb A}'(T)$ consists of all $A \in {\mathbb A}(T)$ containing $a_{nn}$. Then the natural inclusion $A(T^{[1,n-1]}) \rightarrow A(T)$ induces a bijection ${\mathbb A}(T^{[1,n-1]}) \rightarrow {\mathbb A}'(T)$ given by $A \mapsto A \sqcup \{a_{nn}\}$. We consider $n-1$ types in the above observation (3).\\
$\bullet$ Type $k$ ($1 \le k \le n-2$) : In this case, any $A \in {\mathbb A}''(T)$ contains $a_{n,n-k}$ and $a_{n-1,n}$ by Lemma \ref{A1A2}.
\[
  \begin{xy}
   (0,0)="0", +(5,0)="1", +(15,0)="2", +(15,0)="3", +(15,-12)="4", +(-15,-12)="5", +(-20,0)="6", +(-15,0)="7", "2"+(-3,-4)*{\cdots}, "2"+(0,2.5)*{v_{n-1}}, "3"+(0,2.5)*{v_{n}}, "5"+(0,-3)*{v_{n-k}}, "0"+(-5,0)*{T}, "3"+(-5,-2.5)*{a_{n-1,n}}, "3"+(3,-5)*{a_{nn}}, "5"+(5.5,5)*{a_{n,n-k}}
   \ar@{-}"0";"3" \ar@{-}"3";"4" \ar@{-}"4";"5" \ar@{-}"5";"7" \ar@{-}"1";"6"|{n-k-1} \ar@{-}"1";"5"|{n-k} \ar@{-}"2";"5"|{n-1} \ar@{-}"3";"5"|{n}
  \end{xy}
\]
 Thus the natural inclusion $A(T^{[1,n-k-1]}) \rightarrow A(T)$ induces a bijection ${\mathbb A}(T^{[1,n-k-1]}) \rightarrow {\mathbb A}''(T)$ given by $A \mapsto A \sqcup \{a_{n,n-k}, a_{n-1,n}, a_{n-2,n-1},\ldots,a_{n-k,n-k+1}\}$.
\[
  \begin{xy}
   (0,0)="-1", +(5,0)="0", +(25,0)="1", +(25,0)="2", +(20,0)="3", +(20,-15)="4", +(-20,-15)="5", +(-50,0)="6", +(-25,0)="7", "2"+(-2.5,-9)*{\cdots}, "2"+(0,2.5)*{v_{n-1}}, "3"+(0,2.5)*{v_{n}}, "5"+(0,-3)*{v_{n-k}}, "1"+(0,2.5)*{v_{n-k+1}}, "-1"+(-5,0)*{T}, "3"+(-2,-2)*{{\color{red}\bullet}}, +(-4,-3)*{{\color{red}a_{n-1,n}}}, "5"+(2,4)*{{\color{red}\bullet}}, +(3.5,4.5)*{{\color{red}a_{n,n-k}}}, "2"+(-1,-2)*{{\color{red}\bullet}}, +(-4,-3)*{{\color{red}a_{n-2,n-1}}}, "1"+(0,-2)*{{\color{red}\bullet}}, +(-3,-3)*{{\color{red}a_{n-k,n-k+1}}}
   \ar@{-}"-1";"3" \ar@{-}"3";"4" \ar@{-}"4";"5" \ar@{-}"5";"7" \ar@{-}"0";"6"|{n-k-1} \ar@{-}"0";"5"|{n-k} \ar@{-}"1";"5"|{n-k+1} \ar@{-}"2";"5"|{n-1} \ar@{-}"3";"5"|{n}
  \end{xy}
\]
 By induction on $n$, $\varphi$ induces a bijection
 \begin{equation*}
  {\mathbb A}(T) \simeq {\mathbb A}(T^{[1,n-1]}) \sqcup {\mathbb A}(T^{[1,n-k-1]}) \simeq {\mathbb P}(G^{[1,n-1]}) \sqcup {\mathbb P}(G^{[1,n-k-1]}) \simeq {\mathbb P}(G).
 \end{equation*}
 By construction, it is easy to check that it is again compatible with $\varphi$.\\
$\bullet$ Type $n-1$ : In this case, any element of ${\mathbb A}''(T)$ contains $a_{n1}$ and $a_{n-1,n}$, thus ${\mathbb A}''(T)=\{A_0:=\{a_{00},a_{n1},a_{12},\ldots,a_{n-1,n}\}\}$ by Lemma \ref{A1A2}.
\[
\begin{xy}
 (0,0)="0", +(5,8)="1", +(15,5)="2", +(10,-5)="3", +(7,-8)="4", +(-7,-8)="5", +(-25,0)="6", "2"+(-3,-7)*{\rotatebox{20}{$\cdots$}}, "0"+(-3,0)*{v_0}, "1"+(-2.5,2.5)*{v_2}, "2"+(0,3)*{v_{n-1}}, "3"+(0,3)*{v_{n}}, "5"+(0,-3)*{v_1}, "0"+(-5,10)*{T}, "5"+(1.3,3)*{{\color{red}\bullet}}, "3"+(-1,-1)*{{\color{red}\bullet}}, "2"+(-1,-2)*{{\color{red}\bullet}}, "1"+(0.3,-2)*{{\color{red}\bullet}}, "0"+(2.5,-2)*{{\color{red}\bullet}}
 \ar@{-}"1";"2" \ar@{-}"2";"3" \ar@{-}"3";"4" \ar@{-}"4";"5" \ar@{-}"5";"6" \ar@{-}"0";"1" \ar@{-}"0";"6" \ar@{-}"0";"5"|{1} \ar@{-}"1";"5"|{2} \ar@{-}"2";"5"|{n-1} \ar@{-}"3";"5"|{n}
\end{xy}
\]
 Then $\varphi$ induces a bijection
\begin{equation*}
 {\mathbb A}(T) \simeq {\mathbb A}(T^{[1,n-1]}) \sqcup \{A_0\} \simeq {\mathbb P}(G^{[1,n-1]}) \sqcup {\mathbb P}\bigl(G^{[1,0]}\bigr) \simeq {\mathbb P}(G).
\end{equation*}
 By construction, it is easy to check that it is again compatible with $\varphi$.
\end{proof}

\begin{proof}[Proof of Theorem \ref{angle}]
 The assertion follows immediately from Theorem \ref{MS} and Proposition \ref{PMPM}.
\end{proof}



\subsection{Proof of Corollary \ref{main1}}

 We have a natural bijection $\rho : A(T) \rightarrow \overline{Q}_1$ given by the following picture:
\[
  \begin{xy}
   (0,0)="1", +(15,0)="2", +(15,0)="3", +(0,-15)="4", +(-7,0)="5", +(-16,0)="6", +(-7,0)="7", "1"+(0,-7)*{T}, "3"+(10,-7)*{\Longleftrightarrow}, +(15,0)*{\overline{Q}}, "3"+(25,0)="01", +(15,0)="02", +(15,0)="03", +(0,-15)="04", +(-7,0)="05", +(-16,0)="06", +(-7,0)="07", "2"+(0,-5)*{a}, "02"+(-3.5,-7.5)="A", +(7,0)="B"
   \ar@{-}"5";"6" \ar@{-}"2";"6" \ar@{-}"2";"5" \ar@{.}"05";"06" \ar@{.}"02";"06" \ar@{.}"02";"05" \ar@{->}"A";"B"_*{\rho(a)}
  \end{xy}
\]
We denote $\rho(a_{ij})$ by $\alpha_{ij}$.

 Moreover, we denote other arrows in this triangle by the following, if exists:
\[
  \begin{xy}
   (0,0)="01", +(18,0)="02", +(18,0)="03", +(0,-18)="04", +(-6,0)="05", +(-24,0)="06", +(-6,0)="07"
   \ar@{-}"05";"06" \ar@{-}"02";"06" \ar@{-}"02";"05" \ar@{->}"02"+(-5,-8.5);"02"+(5,-8.5)|*{\alpha} \ar@{->}"02"+(5.5,-9.5);"06"+(12.5,0.5)|*{\alpha^+} \ar@{->}"06"+(11.5,0.5);"02"+(-5.5,-9.5)|*{\alpha^-}
  \end{xy}
\]


\begin{lemma}\label{pipm}
 For any $A \in {\mathbb A}(T)$, we have $\rho(A) \in {\mathbb D}(\overline{Q})$.
\end{lemma}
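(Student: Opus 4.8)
The plan is to prove Lemma \ref{pipm} by translating the two defining conditions of a perfect matching of angles (Lemma \ref{A1A2}) into the combinatorics of paths in the non-frozen part of $\overline{Q}$. First I would set up notation: write $Q'$ for the full subquiver of $\overline{Q}$ on the non-frozen vertices $[1,n]$, which by assumption is the linear orientation $1 \to 2 \to \cdots \to n$ (or, in general, the acyclic type $A_n$ quiver $Q$). A path in $Q'$ from vertex $p$ to vertex $q$ exists precisely when $p$ and $q$ are comparable in the natural partial order induced by $Q$ and all intermediate vertices lie on the directed segment between them. The key observation to record is how $s(\alpha_{ij})$ and $t(\alpha_{ij})$ sit among the non-frozen vertices: since $\alpha_{ij}=\rho(a_{ij})$ is the arrow of $\overline{Q}$ dual to the angle $a_{ij}$ of $\triangle_i$ at the vertex $v_j$, its source and target are the two sides of $\triangle_i$ meeting at $v_j$, and for $i \in [1,n-1]$ the two diagonal sides of $\triangle_i$ are exactly $i$ and $i+1$; so among the non-frozen vertices incident to $\alpha_{ij}$, the relevant labels are drawn from $\{i,i+1\}$.

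Next I would argue the contrapositive: suppose $A \in {\mathbb A}(T)$ but $\rho(A) \notin {\mathbb D}(\overline{Q})$. Either $\rho(A)$ fails to be discrete, or it is discrete but not maximal. For the first case, if there are $\alpha_{ij}, \alpha_{i'j'} \in \rho(A)$ with a path in $Q'$ from $t(\alpha_{ij})$ to $s(\alpha_{i'j'})$, I would trace what such a path means geometrically. A nontrivial path forces the diagonal $t(\alpha_{ij})$ to be strictly ``below'' $s(\alpha_{i'j'})$ in the segment ordering, and chasing the labels through the triangle structure one finds two angles of $A$ at the same vertex $v_j$ (violating (A1) — or, dually, in the same triangle, violating (A2)), depending on whether the path ascends or descends; this is where I would use that $A$ is a genuine perfect matching. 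The zero-length path case $t(\alpha_{ij}) = s(\alpha_{i'j'})$ is handled the same way and in fact is the generic obstruction. For maximality: given $A \in {\mathbb A}(T)$ with $\rho(A)$ discrete, I would show no arrow $\beta \notin \rho(A)$ can be adjoined while preserving discreteness, by exhibiting — using (A1)/(A2) and the fact that $|A| = n+1$ while $\overline{Q}$ has exactly $n+1$ triangles — a specific $\alpha \in \rho(A)$ in the same triangle as $\beta$ (or reachable from it) that blocks $\beta$; concretely $\beta$ and some $\alpha_{ij} \in \rho(A)$ lie in a common triangle $\triangle_i$, and since the third side of $\triangle_i$ is frozen only when $i \in \{0,n\}$, in all other cases the three arrows $\alpha, \alpha^+, \alpha^-$ of that triangle form a $3$-cycle in $\overline{Q}$, so any two of them are connected by a length-$1$ path through a non-frozen vertex.

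The main obstacle I anticipate is the bookkeeping in the maximality half: unlike discreteness, which is a local two-arrow condition, maximality requires knowing that every arrow of $\overline{Q}$ outside $\rho(A)$ is ``seen'' by $\rho(A)$, and this needs the global structure of how the triangles $\triangle_0, \ldots, \triangle_n$ are glued. I expect the cleanest route is induction on $n$, peeling off the triangle $\triangle_n$ exactly as in the proof of Proposition \ref{PMPM}: split ${\mathbb A}(T) = {\mathbb A}'(T) \sqcup {\mathbb A}''(T)$ according to whether $a_{nn} \in A$, observe that $\rho$ is compatible with the inclusion $\overline{Q}^{[1,n-1]} \hookrightarrow \overline{Q}$, and reduce to the inductive hypothesis plus a direct check on the finitely many new arrows introduced by $\triangle_n$. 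An alternative, possibly shorter, is to defer maximality entirely to Proposition \ref{DPM}: prove here only that $\rho(A)$ is discrete and of size $n+1$, then let the counting argument in the bijection ${\mathbb A}(T) \leftrightarrow {\mathbb D}(\overline{Q})$ force maximality. Either way, the genuinely nontrivial input is Lemma \ref{A1A2}; once its two conditions are in hand, each path-obstruction is a short local argument in a single triangle.
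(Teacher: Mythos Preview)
Your plan is broadly aimed in the right direction, but both halves miss the mark in opposite ways: the discreteness half is too vague to succeed as written, while the maximality half is far more complicated than necessary.

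\textbf{Discreteness.} Your sketch says that from a path in $Q'$ between $t(\alpha_{ij})$ and $s(\alpha_{i'j'})$ one ``chases the labels'' and finds two angles of $A$ at the same vertex or in the same triangle. The paper's actual argument is a specific propagation along the fan of triangles at a single vertex $v_l$: writing the offending path as $\alpha_{il}\alpha_{i+1,l}\cdots\alpha_{hl}$, one observes that since $a_{hk}\in A$ forces $a_{hl}^-\notin \rho(A)$, condition (A1) pushes $\alpha_{h-1,l}^+$ into $\rho(A)$, and this cascades backwards until $\alpha_{il}^+\in\rho(A)$, contradicting $\alpha_{ij}\in\rho(A)$ in the same triangle $\triangle_i$. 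This step-by-step forcing is the genuine content; ``chasing labels'' does not describe it, and your identification of which condition is violated is inverted (two angles at the same vertex is (A2), two in the same triangle is (A1)).

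\textbf{Maximality.} Here you already have the right idea and then abandon it. Given any $\alpha_{ij}\notin\rho(A)$, condition (A1) immediately produces some $a_{ik}\in A$ with $k\neq j$; the two arrows $\alpha_{ij}$ and $\alpha_{ik}$ lie in the $3$-cycle of $\triangle_i$, and any two arrows of a $3$-cycle are linked by a path of length $0$ or $1$ in $Q$ (length $0$ if the shared vertex is a diagonal, length $1$ via the third arrow of the triangle if the shared vertex is the boundary arc). That is the entire maximality proof in the paper. There is no global bookkeeping, no induction on $n$, and no need to peel off $\triangle_n$. Your worry about the frozen third side is misplaced: the point is not that the three arrows form a cycle in $Q'$, but that two of the three vertices of $\triangle_i$ are always the non-frozen diagonals $i,i+1$ (for $1\le i\le n-1$), which is enough.

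\textbf{The deferral option.} Your alternative of proving only ``discrete of size $n+1$'' here and letting Proposition \ref{DPM} supply maximality would work in principle, but note that the paper's proof of Proposition \ref{DPM} \emph{begins} by invoking Lemma \ref{pipm}, so you would have to restructure the logical dependencies.
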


\begin{proof}
 Firstly, we show that $\rho(A)$ is a discrete subset of $\overline{Q}$. Assume that $a_{ij} \neq a_{hk}$ are two elements of $A$ such that there is a path $\alpha_{il}\alpha_{i+1,l}\cdots \alpha_{h-1,l}\alpha_{hl}$ in $Q$ from $t(\alpha_{ij})$ to $s(\alpha_{hk})$. Since $a_{ij}, a_{hk} \in A$, $i \neq h$ and $j \neq k$ hold by Lemma \ref{A1A2}. It follows from $j \neq k$ that at least one of $\alpha_{ij}$ and $\alpha_{hk}$ does not belong to $Q$. By symmetry, we assume that $t(\alpha_{hk})$ is a boundary arc of $T$. Then $\alpha_{ij}$ is either $\alpha_{il}$ or $\alpha_{il}^-$.

\[
  \begin{xy}
   (0,0)="0", +(50,0)="1", +(50,0)="2", "0"+(5,-25)="3", +(20,0)="4", +(17,0)="5", +(16,0)="6", +(17,0)="7", +(20,0)="8", "5"+(8,6)*{\cdots}, "1"+(0,2)*{v_l}, "3"+(-5,0)="A", "8"+(5,0)="B"
   \ar@{.}"0";"2" \ar@{-}"3";"8" \ar@{-}"1";"3" \ar@{-}"1";"4" \ar@{-}"1";"5" \ar@{-}"1";"6" \ar@{-}"1";"7" \ar@{-}"1";"8" \ar@{.}"A";"3" \ar@{.}"8";"B" \ar@{->}"1"+(-26.5,-15);"1"+(-15.5,-15)^(.7){\alpha_{il}} \ar@{->}"1"+(-14.5,-15);"1"+(-5.5,-15)^(.7){\alpha_{i+1,l}} \ar@{->}"1"+(5.5,-15);"1"+(14.5,-15)^(.3){\alpha_{h-1,l}} \ar@{->}"1"+(15,-15);"1"+(27,-15)^(.3){\alpha_{hl}} \ar@{->}"8"+(-17.5,9);"8"+(-17.5,0.5)|{\alpha_{hk}} \ar@{->}"3"+(17.5,0.5);"3"+(17.5,9)|{\alpha_{il}^-} \ar@{->}"7"+(-10,9);"7"+(-10,0.5)|{\alpha_{h-1,l}^+ \ \ }
  \end{xy}
\]
 Since $\alpha_{hl}^-$ does not belong to $\rho(A)$ and $A$ is a perfect matching, $\alpha_{h-1,l}^+$ belongs to $\rho(A)$. Repeating the same argument, $\alpha_{sl}^+$ belongs to $\rho(A)$ for any $s \in [i,h]$. This is a contradiction since $\alpha_{ij}$ and $\alpha_{il}^+$ belong to $\rho(A)$.

 To prove that $\rho(A)$ is maximal discrete, take $a_{ij} \in A(T) \setminus A$. By (A1), there exists $a_{ik} \in A$ with $k \neq j$. In this case, there exists a path in $Q$ of length $0$ or $1$ either from $t(\alpha_{ij})$ to $s(\alpha_{ik})$, or from $t(\alpha_{ik})$ to $s(\alpha_{ij})$. In both cases, $\rho(A) \sqcup \{\alpha_{ij}\}$ is not discrete. Thus the assertion follows.
\end{proof}


\begin{proposition}\label{DPM}
 The bijection $\rho : A(T) \rightarrow \overline{Q}_1$ induces a bijection $\rho : {\mathbb A}(T) \rightarrow {\mathbb D}(\overline{Q})$.
\end{proposition}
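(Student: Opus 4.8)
The plan is to show that $\rho : {\mathbb A}(T)\to{\mathbb D}(\overline{Q})$ is injective and surjective. Injectivity is immediate: $\rho : A(T)\to\overline{Q}_1$ is a bijection of sets, hence injective on subsets, and by Lemma \ref{pipm} it does carry ${\mathbb A}(T)$ into ${\mathbb D}(\overline{Q})$. So the real content is surjectivity. Given $D\in{\mathbb D}(\overline{Q})$, I would set $A:=\rho^{-1}(D)$; this is well defined because $\rho$ is surjective, and it satisfies $\rho(A)=D$. It then suffices to check that $A$ satisfies conditions (A1) and (A2) of Lemma \ref{A1A2}, for then $A\in{\mathbb A}(T)$.

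The first step is to extract the two "at most one" statements from discreteness of $D$ alone. For a triangle $\triangle_i$ with $1\le i\le n-1$, its three arcs span a $3$-cycle in $\overline{Q}$, and the arrow of this $3$-cycle joining the two diagonal sides lies in the non-frozen subquiver $R'$; inspecting the three pairs of arrows of the $3$-cycle, one sees that any two of them lying in $D$ always produces a path in $R'$ from the target of one to the source of the other, contradicting discreteness. (For the pair whose common vertex is the frozen side, one uses the arrow between the two diagonals instead; the orientation of the $3$-cycle makes this path go the right way.) The triangles $\triangle_0$ and $\triangle_n$ each contribute only two arrows to $\overline{Q}$, pointing respectively into and out of the unique non-frozen side, so they cannot both lie in $D$ either. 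Hence $A$ has at most one angle in each triangle. Next, fix a vertex $v_j$, incident to $d\ge 2$ arcs $c_0,\dots,c_d$ with $c_0,c_d$ the two frozen boundary edges. By the orientation rule for $\overline{Q}$ the $d$ arrows corresponding to the angles at $v_j$ form a single directed path, which after relabelling we write as $c_d\to c_{d-1}\to\cdots\to c_0$, with non-frozen intermediate vertices $c_1,\dots,c_{d-1}$; two such arrows in $D$ give a directed sub-path lying in $R'$ and again contradict discreteness. So $A$ has at most one angle at each $v_j$, and in particular $|A|=|D|\le n+1$.

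The step I expect to be the main obstacle is promoting these bounds to equalities using maximality of $D$. Write $\gamma_s:=(c_{s+1}\to c_s)$ for $0\le s\le d-1$, and suppose some $v_j$ is uncovered, so no $\gamma_s$ lies in $D$. For each $s$, maximality makes $D\cup\{\gamma_s\}$ non-discrete; as $D$ is discrete and $Q$ is acyclic, this forces some $\delta\in D$ with a path in $R'$ either from $t(\gamma_s)=c_s$ to $s(\delta)$ (possible only for $s\in[1,d-1]$) or from $t(\delta)$ to $s(\gamma_s)=c_{s+1}$ (possible only for $s\in[0,d-2]$). Since $c_{d-1}\to\cdots\to c_1$ is a directed path in $R'$, the set $U\subseteq[1,d-1]$ of $s$ realizing the first alternative is upward closed, and the set $V\subseteq[0,d-2]$ of $s$ realizing the second is downward closed; they must jointly cover $[0,d-1]$ (with $0\in V$, $d-1\in U$), so writing $U=[u,d-1]$, $V=[0,v]$ gives $u\le v+1$. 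Choosing $\delta$ witnessing $u\in U$ and $\delta'$ witnessing $v\in V$, and concatenating the resulting paths through $c_{v+1}$ and $c_u$ (legitimate since $v+1\ge u$), one gets a path in $R'$ from $t(\delta')$ to $s(\delta)$, contradicting discreteness of $D$ if $\delta\ne\delta'$ and acyclicity of $Q$ if $\delta=\delta'$. Hence every $v_j$ is covered by exactly one angle, so $|A|=n+1$, and then the at-most-one-per-triangle bound together with the pigeonhole principle over the $n+1$ triangles forces exactly one angle in each triangle. Thus $A$ satisfies (A1) and (A2), so $A\in{\mathbb A}(T)$ with $\rho(A)=D$, and surjectivity follows.

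An alternative for the surjectivity step would be an induction on $n$ parallel to the proof of Proposition \ref{PMPM}, splitting ${\mathbb D}(\overline{Q})$ according to whether $\alpha_{nn}\in D$ and matching the pieces against the decomposition of ${\mathbb A}(T)$ used there; but the direct verification above appears shorter.
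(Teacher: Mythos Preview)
Your proof is correct, and the reduction to checking that $\rho^{-1}(D)$ satisfies (A1) and (A2) matches the paper. The organization differs, though. The paper proves (A1) in full first: the ``at most one per triangle'' part is as you do it, but ``at least one per triangle'' is obtained directly from maximality by choosing the arrow $\alpha_{ij}$ whose opposite side is the boundary arc and applying maximality to both $\alpha_{ij}$ and its neighbor $\alpha_{ij}^{\pm}$ to manufacture a forbidden path. Only then does the paper turn to (A2), and its ``at least one per vertex'' argument \emph{uses} the already established (A1): knowing each triangle at $v_j$ contains exactly one arrow of $D$ and that $\alpha_{ij}\notin D$, one of $\alpha_{ij}^{\pm}$ must lie in $D$, and a propagation along the fan forces $\alpha_{hj}^{+}\in D$ for all larger $h$ (or symmetrically with $-$), reaching a contradiction at the boundary. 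You reverse this dependency: your $U/V$ interval argument extracts ``at least one per vertex'' from maximality alone, and ``at least one per triangle'' then falls out by pigeonhole ($|A|=n+1$ angles spread over $n+1$ triangles with at most one each). This is arguably cleaner, avoiding the two-stage dependency; the paper's route, on the other hand, makes the per-triangle mechanism more explicit and never needs the global count. One minor slip: ``incident to $d\ge 2$ arcs $c_0,\dots,c_d$'' lists $d+1$ arcs; since every labeled $v_j$ meets at least one diagonal (hence at least three arcs), your index ranges $[0,d-1]$, $[1,d-1]$, $[0,d-2]$ are consistent as written once this off-by-one in the prose is corrected.
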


\begin{proof}
 By Lemma \ref{pipm}, we only have to show that for any $D \in {\mathbb D}(\overline{Q})$, we have $\rho^{-1}(D) \in {\mathbb A}(T)$.

 Firstly, we assume that $\rho^{-1}(D)$ does not satisfy (A1). Then there exists $i$ such that one of the following conditions holds.

(1) There are two elements $\alpha_{ij} \neq \alpha_{ik}$ in $D$.

(2) There are no $j$ such that $\alpha_{ij} \in D$.

 In the case (1), there exists a path in $Q$ of length $0$ or $1$ either from $t(\alpha_{ij})$ to $s(\alpha_{ik})$, or from $t(\alpha_{ik})$ to $s(\alpha_{ij})$. Thus $D$ is not discrete.

 In the case (2), we take an arrow $\alpha_{ij}$ such that $s(\alpha_{ij}^-)=t(\alpha_{ij}^+)$ is a boundary arc of $T$. Since $D$ is maximal discrete and $\alpha_{ij} \notin D$, one of the following conditions hold.

(2-i) There exists $\alpha \in D$ such that there is a path in $Q$ from $t(\alpha_{ij})$ to $s(\alpha)$.

(2-ii)  There exists $\alpha \in D$ such that there is a path in $Q$ from $t(\alpha)$ to $s(\alpha_{ij})$.

\[
  \begin{xy}
   (0,0)="01", +(15,0)="02", +(15,0)="03", +(0,-15)="04", +(-6,0)="05", +(-18,0)="06", +(-6,0)="07", "03"+(7,-8)="A", +(8,0)="B", "01"+(-2,0)*{\mbox{(2-i)}}, "02"+(20,3)="C", +(7,5)="D"
   \ar@{.}"04";"05" \ar@{-}"05";"06" \ar@{.}"06";"07" \ar@{-}"02";"06" \ar@{-}"02";"05" \ar@{->}"02"+(-4,-8);"02"+(4,-8)^{\alpha_{ij}} \ar@{->}"05"+(-4.5,6.5);"05"+(-11,0.5)|{\alpha_{ij}^+} \ar@{->}"A";"B"^{\alpha} \ar@{.>}"02"+(6,-8);"A"+(-1,0) \ar@{->}"D";"C"^{\beta} \ar@{.>}"C"+(-0.5,-0.3);"02"+(6,-7.5)|(.6){\mbox{{\small \ \  path in $Q$}}}
  \end{xy}\hspace{15mm}
  \begin{xy}
   (0,0)="01", +(15,0)="02", +(15,0)="03", +(0,-15)="04", +(-6,0)="05", +(-18,0)="06", +(-6,0)="07", "01"+(-7,-8)="A", +(-8,0)="B", "01"+(-20,0)*{\mbox{(2-ii)}}, "02"+(-20,3)="C", +(-7,5)="D"
   \ar@{.}"04";"05" \ar@{-}"05";"06" \ar@{.}"06";"07" \ar@{-}"02";"06" \ar@{-}"02";"05" \ar@{->}"02"+(-4,-8);"02"+(4,-8)^{\alpha_{ij}} \ar@{->}"06"+(11,0.5);"06"+(4.5,6.5)|{\alpha_{ij}^-} \ar@{->}"B";"A"^{\alpha} \ar@{.>}"A"+(1,0);"02"+(-6,-8) \ar@{->}"C";"D"^{\beta} \ar@{.>}"02"+(-6,-7.5);"C"+(0.5,-0.3)|(.4){\mbox{{\small path in $Q$ \ \ }}}
  \end{xy}
\]

 We consider the case (2-i). In this case, there exists the arrow $\alpha_{ij}^+$. Since $D$ is maximal discrete and $\alpha_{ij}^+ \notin D$, there exists $\beta \in D$ such that there is a path in $Q$ from $t(\beta)$ to $s(\alpha)$ factoring through $s(\alpha_{ij}^+)=t(\alpha_{ij})$. Thus there is a path in $Q$ from $t(\beta)$ to $s(\alpha)$, a contradiction. In the case (2-ii), we have a contradiction by the same argument. Consequently, $\rho^{-1}(D)$ satisfies (A1).

 Secondly, we assume that $\rho^{-1}(D)$ does not satisfy (A2). Then there exists $j$ such that one of the following conditions holds.

(1) There are two elements $\alpha_{ij} \neq \alpha_{hj}$ in $D$.

(2) There are no $i$ such that $\alpha_{ij} \in D$.

 In the case (1), there is a path in $Q$ either from $t(\alpha_{ij})$ to $s(\alpha_{hj})$, or from $t(\alpha_{hj})$ to $s(\alpha_{ij})$. Thus $\{\alpha_{ij},\alpha_{hj}\}$ is not discrete.

 In the case (2), all angles at the vertex $v_j$ are labeled as $a_{sj}, a_{s+1,j}, \ldots, a_{t-1,j}, a_{tj}$.
\[
  \begin{xy}
   (0,0)="0", +(30,0)="1", +(30,0)="2", "0"+(0,-17)="3", +(20,0)="4", +(20,0)="5", +(20,0)="6", "1"+(0,-13)*{\cdots}, "1"+(0,2)*{v_j}
   \ar@{-}"0";"2" \ar@{-}"3";"6" \ar@{-}"1";"3" \ar@{-}"1";"4" \ar@{-}"1";"5" \ar@{-}"1";"6" \ar@{->}"1"+(-20,-1);"1"+(-20,-10)|{\alpha_{sj}} \ar@{->}"1"+(-19,-11);"1"+(-7,-11)^(.7){\alpha_{s+1,j}} \ar@{->}"1"+(20,-10);"1"+(20,-1)|{\alpha_{tj}} \ar@{->}"1"+(7,-11);"1"+(19,-11)^(.3){\alpha_{t-1,j}}
  \end{xy}
\]

 Since $\rho^{-1}(D)$ satisfies (A1), for any $i \in [s,t]$, precisely one of $\alpha_{ij}^-$ and $\alpha_{ij}^+$ belongs to $D$. Assume $\alpha_{ij}^+ \in D$ for some $i \in [s,t-1]$. Then $\alpha_{i+1,j}^-$ does not belong to $D$ since $s(\alpha_{ij}^+)=t(\alpha_{i+1,j}^-)$. Thus $\alpha_{i+1,j}^+$ exists and belongs to $D$. Repeating the same argument, $\alpha_{hj}^+$ exists and belongs to $D$ for any $h \in [i,t]$. This is a contradiction since the arrow $\alpha_{tj}^- : t(\alpha_{tj}^+) \rightarrow s(\alpha_{t-1,j}^+)$ belongs to $Q$.
\[
  \begin{xy}
   (0,0)="0", +(35,0)="1", +(35,0)="2", "0"+(0,-20)="3", +(25,0)="4", +(20,0)="5", +(25,0)="6", "1"+(0,2)*{v_j}, "1"+(0,-13)*{\cdots}
   \ar@{-}"0";"2" \ar@{-}"3";"6" \ar@{-}"1";"3" \ar@{-}"1";"4" \ar@{-}"1";"5" \ar@{-}"1";"6" \ar@{-}"0";"3" \ar@{-}"2";"6" \ar@{->}"1"+(-17,-0.5);"1"+(-17,-8.5)|{\alpha_{sj}} \ar@{->}"1"+(-17,-10);"1"+(-5.5,-10)^(.7){\alpha_{s+1,j}} {\color{red}\ar@{->}"1"+(-19,-10);"1"+(-34.5,-10)|{\alpha_{sj}^+}}\ar@{->}"1"+(-24,-19.5);"1"+(-17,-11) {\color{red}\ar@{->}"1"+(-6,-11);"1"+(-23,-19.5)|{\ \ \alpha_{s+1,j}^+}}\ar@{->}"1"+(17,-8.5);"1"+(17,-0.5)^(.6){\alpha_{tj}} \ar@{->}"1"+(5.5,-10);"1"+(17,-10)^(.3){\alpha_{t-1,j}} \ar@{->}"1"+(34.5,-10);"1"+(19,-10)|{\alpha_{tj}^-} {\color{red}\ar@{->}"1"+(17,-11);"1"+(24,-19.5)|{\ \ \alpha_{t-1,j}^+}}\ar@{->}"1"+(23,-19.5);"1"+(6,-11) {\color{red}\ar@{->}"1"+(18,-0.5);"1"+(34.5,-9)|{\alpha_{tj}^+}}
  \end{xy}
\]
Assume $\alpha_{ij}^- \in D$ for some $i \in [s+1,t]$. Then we have a contradiction by the same argument. 

 It remains to consider the case $s+1=t$, and $\alpha_{sj}^-$ and $\alpha_{s+1,j}^+$ belong to $D$. In this case, there is a path $\alpha_{s+1,j}^- \alpha_{sj}^+ : t(\alpha_{s+1,j}^+) \rightarrow s(\alpha_{sj}^-)$ in $Q$, a contradiction.
\[
  \begin{xy}
   (0,0)="0", +(35,0)="1", "0"+(0,-20)="3", +(30,0)="4", "1"+(0,2)*{v_j}
   \ar@{-}"0";"1" \ar@{-}"3";"4" \ar@{-}"1";"3" \ar@{-}"1";"4" \ar@{-}"0";"3" \ar@{->}"1"+(-17,-0.5);"1"+(-17,-8.5)|{\alpha_{sj}} \ar@{->}"1"+(-17,-10);"1"+(-3,-10)^(.7){\alpha_{s+1,j}} {\color{red}\ar@{->}"1"+(-34.5,-9);"1"+(-18,-0.5)|{\alpha_{sj}^-}}\ar@{->}"1"+(-18.5,-10);"1"+(-34.5,-10)|{\alpha_{sj}^+} \ar@{->}"1"+(-19,-19.5);"1"+(-17,-11)|{\alpha_{s+1,j}^- \ \ } {\color{red}\ar@{->}"1"+(-4,-11);"1"+(-18,-19.5)|{\ \ \alpha_{s+1,j}^+}}
  \end{xy}
\]
 Consequently, $\rho^{-1}(D)$ satisfies (A2).
\end{proof}


\begin{proof}[Proof of Corollary \ref{main1}]
 The assertion follows immediately from Theorem \ref{angle} and Proposition \ref{DPM}.
\end{proof}




\section{Minimal cuts of quivers with potential}

 In this section, for a quiver $Q$ of type $A_n$, we show that maximal discrete subsets of $\overline{Q}$ coincides with {\it minimal cuts} of the {\it quiver with potential} ($\widetilde{Q},\widetilde{W}$) introduced by Demonet and Luo \cite{DL}.


\subsection{Quivers with potential and cuts}

 We recall the definitions of quivers with potential \cite{DWZ} and of their cuts \cite{BFPPT,HI}. For a quiver $R$, we denote by ${\mathbb Z}R$ the path algebra of $R$ over the ring ${\mathbb Z}$ of integers.

\begin{definition}
 (1) A {\it quiver with potential} (QP for short) is a pair $(R,W)$ of a quiver $R$ and an element $W \in {\mathbb Z}R$ which is a linear combination of cyclic paths.\par
 (2) A {\it cut} of a QP $(R,W)$ is a subset $C$ of $R_1$ such that any cyclic path appearing in $W$ contains a precisely one arrow in $C$.
\end{definition}

 Let us recall the QP $(\widetilde{Q},\widetilde{W})$ introduced in \cite{DL}. The quiver $\widetilde{Q}$ has the set of vertices $\widetilde{Q}_0=\overline{Q}_0$ and the set of arrows $\widetilde{Q}_1=\overline{Q}_1 \sqcup I \sqcup E$ defined as follows:\\
$\bullet$ $I$ consists of arrows from $i$ to $j$, where $i$ and $j$ are boundary arcs that are in a common triangle of $T$ and $j$ follows $i$ in the clockwise order.\\
$\bullet$ $E$ consists of arrows from $i$ to $j$ such that $i$ and $j$ are boundary arcs that are not in a common triangle of $T$ and $i$ is a predecessor of $j$ with respect to anti-clockwise order. An element of $E$ is called an {\it external arrow}.\par

 We consider the following two types of cycles of $\widetilde{Q}$. A {\it triangle cycle} is a cycle of length $3$ inside a triangle of $T$. A {\it big cycle} is a cycle which contains precisely one external arrow, winding around a vertex of the polygon. We define
\begin{equation*}
 \widetilde{W}=\sum(\mbox{triangle cycles in }\widetilde{Q})-\sum(\mbox{big cycles in }\widetilde{Q}).
\end{equation*}
 By construction, it is easy to see that both the number of triangle cycles in $\widetilde{Q}$ and the number of big cycles in $\widetilde{Q}$ are $n+1$.\par

\begin{example}\label{A2cut}
 Up to rotation, all triangulations of a pentagon give the following situation:
\[
  \widetilde{Q}\hspace{5mm}
  \begin{xy}
   (0,0)="0", +(16,11.5)="1", +(16,-11.5)="2", +(-6.1,-19)="3", +(-19.8,0)="4"
   \ar@{.}"0";"1"|{3} \ar@{.}"1";"2"|{7} \ar@{.}"2";"3"|{6} \ar@{.}"3";"4"|{5} \ar@{.}"4";"0"|{4} \ar@{.}"1";"4"|{1} \ar@{.}"1";"3"|{2} \ar@{->}"1"+(-7.5,-7.5);"1"+(-5.5,-13.5)|{a_1} \ar@{->}"1"+(-3.5,-15);"1"+(3.5,-15)|{b_1} \ar@{->}"1"+(5.5,-13.5);"1"+(7.5,-7.5)|{c_1} \ar@{->}"1"+(-6.3,-16.3);"1"+(-11.5,-20)|{a_2} \ar@{->}"1"+(-12,-19);"1"+(-8.5,-8)|{a_3} \ar@{->}"1"+(4.5,-17);"1"+(1,-29)|{b_2} \ar@{->}"1"+(-1,-29);"1"+(-4.5,-17)|{b_3} \ar@{->}"1"+(8.5,-8);"1"+(12,-19)|{c_2} \ar@{->}"1"+(11.5,-20);"1"+(6.3,-16.3)|{c_3} \ar@{->}@/^-7mm/"1"+(7.5,-4);"1"+(-7,-4)_{\alpha} \ar@{->}@/^-7mm/"1"+(-13.5,-22.5);"1"+(-1,-32)_{\beta} \ar@{->}@/^-7mm/"1"+(1,-32);"1"+(13.5,-22.5)_{\gamma}
  \end{xy}
 \hspace{15mm}
  \begin{xy}
   (10,0)*{\widetilde{W}=a_1a_2a_3+b_1b_2b_3+c_1c_2c_3}, +(0,-5)*{-\alpha a_1b_1c_1-\beta b_3a_2-\gamma c_3b_2}
  \end{xy}
\]
 We display all cuts of $(\widetilde{Q},\widetilde{W})$:
\[
  \begin{xy}
   (0,0)="0", +(8,5.75)="1", +(8,-5.75)="2", +(-3.05,-9.5)="3", +(-9.9,0)="4"
   \ar@{.}"0";"1" \ar@{.}"1";"2" \ar@{.}"2";"3" \ar@{.}"3";"4" \ar@{.}"4";"0" \ar@{.}"1";"4" \ar@{.}"1";"3" \ar@{->}"1"+(-3.75,-3.75);"1"+(-2.75,-7) \ar@{->}"1"+(-1.75,-7.5);"1"+(1.75,-7.5) \ar@{->}"1"+(2.75,-7);"1"+(3.75,-3.75) \ar@{->}"1"+(-3.15,-8.15);"1"+(-5.75,-10) {\color{red}\ar@{->}"1"+(-6,-9.5);"1"+(-4.25,-4)}{\color{red}\ar@{->}"1"+(2.25,-8.5);"1"+(0.5,-14.5)}\ar@{->}"1"+(-0.5,-14.5);"1"+(-2.25,-8.5) {\color{red}\ar@{->}"1"+(4.25,-4);"1"+(6,-9.5)}\ar@{->}"1"+(5.75,-10);"1"+(3.15,-8.15) {\color{red}\ar@{->}@/^-3.5mm/"1"+(3.75,-2);"1"+(-3.5,-2)}{\color{red}\ar@{->}@/^-3.5mm/"1"+(-6.75,-11.25);"1"+(-0.5,-16)}\ar@{->}@/^-3.5mm/"1"+(0.5,-16);"1"+(6.75,-11.25)
  \end{xy}\hspace{6mm}
  \begin{xy}
   (0,0)="0", +(8,5.75)="1", +(8,-5.75)="2", +(-3.05,-9.5)="3", +(-9.9,0)="4"
   \ar@{.}"0";"1" \ar@{.}"1";"2" \ar@{.}"2";"3" \ar@{.}"3";"4" \ar@{.}"4";"0" \ar@{.}"1";"4" \ar@{.}"1";"3"
   \ar@{->}"1"+(-3.75,-3.75);"1"+(-2.75,-7) \ar@{->}"1"+(-1.75,-7.5);"1"+(1.75,-7.5) \ar@{->}"1"+(2.75,-7);"1"+(3.75,-3.75) \ar@{->}"1"+(-3.15,-8.15);"1"+(-5.75,-10) {\color{red}\ar@{->}"1"+(-6,-9.5);"1"+(-4.25,-4)}\ar@{->}"1"+(2.25,-8.5);"1"+(0.5,-14.5) {\color{red}\ar@{->}"1"+(-0.5,-14.5);"1"+(-2.25,-8.5)}{\color{red}\ar@{->}"1"+(4.25,-4);"1"+(6,-9.5)}\ar@{->}"1"+(5.75,-10);"1"+(3.15,-8.15) {\color{red}\ar@{->}@/^-3.5mm/"1"+(3.75,-2);"1"+(-3.5,-2)}\ar@{->}@/^-3.5mm/"1"+(-6.75,-11.25);"1"+(-0.5,-16) {\color{red}\ar@{->}@/^-3.5mm/"1"+(0.5,-16);"1"+(6.75,-11.25)}
  \end{xy}\hspace{6mm}
  \begin{xy}
   (0,0)="0", +(8,5.75)="1", +(8,-5.75)="2", +(-3.05,-9.5)="3", +(-9.9,0)="4"
   \ar@{.}"0";"1" \ar@{.}"1";"2" \ar@{.}"2";"3" \ar@{.}"3";"4" \ar@{.}"4";"0" \ar@{.}"1";"4" \ar@{.}"1";"3"
   \ar@{->}"1"+(-3.75,-3.75);"1"+(-2.75,-7) \ar@{->}"1"+(-1.75,-7.5);"1"+(1.75,-7.5) \ar@{->}"1"+(2.75,-7);"1"+(3.75,-3.75) \ar@{->}"1"+(-3.15,-8.15);"1"+(-5.75,-10) {\color{red}\ar@{->}"1"+(-6,-9.5);"1"+(-4.25,-4)}\ar@{->}"1"+(2.25,-8.5);"1"+(0.5,-14.5){\color{red}\ar@{->}"1"+(-0.5,-14.5);"1"+(-2.25,-8.5)}\ar@{->}"1"+(4.25,-4);"1"+(6,-9.5) {\color{red}\ar@{->}"1"+(5.75,-10);"1"+(3.15,-8.15)}{\color{red}\ar@{->}@/^-3.5mm/"1"+(3.75,-2);"1"+(-3.5,-2)}\ar@{->}@/^-3.5mm/"1"+(-6.75,-11.25);"1"+(-0.5,-16) \ar@{->}@/^-3.5mm/"1"+(0.5,-16);"1"+(6.75,-11.25)
  \end{xy}\hspace{6mm}
  \begin{xy}
   (0,0)="0", +(8,5.75)="1", +(8,-5.75)="2", +(-3.05,-9.5)="3", +(-9.9,0)="4"
   \ar@{.}"0";"1" \ar@{.}"1";"2" \ar@{.}"2";"3" \ar@{.}"3";"4" \ar@{.}"4";"0" \ar@{.}"1";"4" \ar@{.}"1";"3"
   \ar@{->}"1"+(-3.75,-3.75);"1"+(-2.75,-7) \ar@{->}"1"+(-1.75,-7.5);"1"+(1.75,-7.5) \ar@{->}"1"+(2.75,-7);"1"+(3.75,-3.75) {\color{red}\ar@{->}"1"+(-3.15,-8.15);"1"+(-5.75,-10)}\ar@{->}"1"+(-6,-9.5);"1"+(-4.25,-4) {\color{red}\ar@{->}"1"+(2.25,-8.5);"1"+(0.5,-14.5)}\ar@{->}"1"+(-0.5,-14.5);"1"+(-2.25,-8.5) {\color{red}\ar@{->}"1"+(4.25,-4);"1"+(6,-9.5)}\ar@{->}"1"+(5.75,-10);"1"+(3.15,-8.15) {\color{red}\ar@{->}@/^-3.5mm/"1"+(3.75,-2);"1"+(-3.5,-2)}\ar@{->}@/^-3.5mm/"1"+(-6.75,-11.25);"1"+(-0.5,-16) \ar@{->}@/^-3.5mm/"1"+(0.5,-16);"1"+(6.75,-11.25)
  \end{xy}
  \hspace{6mm}
  \begin{xy}
   (0,0)="0", +(8,5.75)="1", +(8,-5.75)="2", +(-3.05,-9.5)="3", +(-9.9,0)="4"
   \ar@{.}"0";"1" \ar@{.}"1";"2" \ar@{.}"2";"3" \ar@{.}"3";"4" \ar@{.}"4";"0" \ar@{.}"1";"4" \ar@{.}"1";"3"
 \ar@{->}"1"+(-3.75,-3.75);"1"+(-2.75,-7) {\color{red}\ar@{->}"1"+(-1.75,-7.5);"1"+(1.75,-7.5)}\ar@{->}"1"+(2.75,-7);"1"+(3.75,-3.75) \ar@{->}"1"+(-3.15,-8.15);"1"+(-5.75,-10) {\color{red}\ar@{->}"1"+(-6,-9.5);"1"+(-4.25,-4)}\ar@{->}"1"+(2.25,-8.5);"1"+(0.5,-14.5) \ar@{->}"1"+(-0.5,-14.5);"1"+(-2.25,-8.5) {\color{red}\ar@{->}"1"+(4.25,-4);"1"+(6,-9.5)}\ar@{->}"1"+(5.75,-10);"1"+(3.15,-8.15) \ar@{->}@/^-3.5mm/"1"+(3.75,-2);"1"+(-3.5,-2) {\color{red}\ar@{->}@/^-3.5mm/"1"+(-6.75,-11.25);"1"+(-0.5,-16)}{\color{red}\ar@{->}@/^-3.5mm/"1"+(0.5,-16);"1"+(6.75,-11.25)}
  \end{xy}
  \hspace{6mm}
  \begin{xy}
   (0,0)="0", +(8,5.75)="1", +(8,-5.75)="2", +(-3.05,-9.5)="3", +(-9.9,0)="4"
   \ar@{.}"0";"1" \ar@{.}"1";"2" \ar@{.}"2";"3" \ar@{.}"3";"4" \ar@{.}"4";"0" \ar@{.}"1";"4" \ar@{.}"1";"3"
   \ar@{->}"1"+(-3.75,-3.75);"1"+(-2.75,-7) {\color{red}\ar@{->}"1"+(-1.75,-7.5);"1"+(1.75,-7.5)}\ar@{->}"1"+(2.75,-7);"1"+(3.75,-3.75) \ar@{->}"1"+(-3.15,-8.15);"1"+(-5.75,-10) {\color{red}\ar@{->}"1"+(-6,-9.5);"1"+(-4.25,-4)}\ar@{->}"1"+(2.25,-8.5);"1"+(0.5,-14.5) \ar@{->}"1"+(-0.5,-14.5);"1"+(-2.25,-8.5) \ar@{->}"1"+(4.25,-4);"1"+(6,-9.5) {\color{red}\ar@{->}"1"+(5.75,-10);"1"+(3.15,-8.15)}\ar@{->}@/^-3.5mm/"1"+(3.75,-2);"1"+(-3.5,-2) {\color{red}\ar@{->}@/^-3.5mm/"1"+(-6.75,-11.25);"1"+(-0.5,-16)}\ar@{->}@/^-3.5mm/"1"+(0.5,-16);"1"+(6.75,-11.25)
  \end{xy}
  \hspace{6mm}
  \begin{xy}
   (0,0)="0", +(8,5.75)="1", +(8,-5.75)="2", +(-3.05,-9.5)="3", +(-9.9,0)="4"
   \ar@{.}"0";"1" \ar@{.}"1";"2" \ar@{.}"2";"3" \ar@{.}"3";"4" \ar@{.}"4";"0" \ar@{.}"1";"4" \ar@{.}"1";"3"
   \ar@{->}"1"+(-3.75,-3.75);"1"+(-2.75,-7) \ar@{->}"1"+(-1.75,-7.5);"1"+(1.75,-7.5) {\color{red}\ar@{->}"1"+(2.75,-7);"1"+(3.75,-3.75)}\ar@{->}"1"+(-3.15,-8.15);"1"+(-5.75,-10) {\color{red}\ar@{->}"1"+(-6,-9.5);"1"+(-4.25,-4)}{\color{red}\ar@{->}"1"+(2.25,-8.5);"1"+(0.5,-14.5)}\ar@{->}"1"+(-0.5,-14.5);"1"+(-2.25,-8.5) \ar@{->}"1"+(4.25,-4);"1"+(6,-9.5) \ar@{->}"1"+(5.75,-10);"1"+(3.15,-8.15) \ar@{->}@/^-3.5mm/"1"+(3.75,-2);"1"+(-3.5,-2) {\color{red}\ar@{->}@/^-3.5mm/"1"+(-6.75,-11.25);"1"+(-0.5,-16)}\ar@{->}@/^-3.5mm/"1"+(0.5,-16);"1"+(6.75,-11.25)
  \end{xy}
\]

\[
  \begin{xy}
   (0,0)="0", +(8,5.75)="1", +(8,-5.75)="2", +(-3.05,-9.5)="3", +(-9.9,0)="4"
   \ar@{.}"0";"1" \ar@{.}"1";"2" \ar@{.}"2";"3" \ar@{.}"3";"4" \ar@{.}"4";"0" \ar@{.}"1";"4" \ar@{.}"1";"3"
   {\color{red}\ar@{->}"1"+(-3.75,-3.75);"1"+(-2.75,-7)}\ar@{->}"1"+(-1.75,-7.5);"1"+(1.75,-7.5) \ar@{->}"1"+(2.75,-7);"1"+(3.75,-3.75) \ar@{->}"1"+(-3.15,-8.15);"1"+(-5.75,-10) \ar@{->}"1"+(-6,-9.5);"1"+(-4.25,-4) {\color{red}\ar@{->}"1"+(2.25,-8.5);"1"+(0.5,-14.5)}\ar@{->}"1"+(-0.5,-14.5);"1"+(-2.25,-8.5) {\color{red}\ar@{->}"1"+(4.25,-4);"1"+(6,-9.5)}\ar@{->}"1"+(5.75,-10);"1"+(3.15,-8.15) \ar@{->}@/^-3.5mm/"1"+(3.75,-2);"1"+(-3.5,-2) {\color{red}\ar@{->}@/^-3.5mm/"1"+(-6.75,-11.25);"1"+(-0.5,-16)}\ar@{->}@/^-3.5mm/"1"+(0.5,-16);"1"+(6.75,-11.25)
  \end{xy}
  \hspace{6mm}
  \begin{xy}
   (0,0)="0", +(8,5.75)="1", +(8,-5.75)="2", +(-3.05,-9.5)="3", +(-9.9,0)="4"
   \ar@{.}"0";"1" \ar@{.}"1";"2" \ar@{.}"2";"3" \ar@{.}"3";"4" \ar@{.}"4";"0" \ar@{.}"1";"4" \ar@{.}"1";"3"
   \ar@{->}"1"+(-3.75,-3.75);"1"+(-2.75,-7) {\color{red}\ar@{->}"1"+(-1.75,-7.5);"1"+(1.75,-7.5)}\ar@{->}"1"+(2.75,-7);"1"+(3.75,-3.75) {\color{red}\ar@{->}"1"+(-3.15,-8.15);"1"+(-5.75,-10)}\ar@{->}"1"+(-6,-9.5);"1"+(-4.25,-4) \ar@{->}"1"+(2.25,-8.5);"1"+(0.5,-14.5) \ar@{->}"1"+(-0.5,-14.5);"1"+(-2.25,-8.5) {\color{red}\ar@{->}"1"+(4.25,-4);"1"+(6,-9.5)}\ar@{->}"1"+(5.75,-10);"1"+(3.15,-8.15) \ar@{->}@/^-3.5mm/"1"+(3.75,-2);"1"+(-3.5,-2) \ar@{->}@/^-3.5mm/"1"+(-6.75,-11.25);"1"+(-0.5,-16) {\color{red}\ar@{->}@/^-3.5mm/"1"+(0.5,-16);"1"+(6.75,-11.25)}
  \end{xy}
  \hspace{6mm}
  \begin{xy}
   (0,0)="0", +(8,5.75)="1", +(8,-5.75)="2", +(-3.05,-9.5)="3", +(-9.9,0)="4"
   \ar@{.}"0";"1" \ar@{.}"1";"2" \ar@{.}"2";"3" \ar@{.}"3";"4" \ar@{.}"4";"0" \ar@{.}"1";"4" \ar@{.}"1";"3"
   {\color{red}\ar@{->}"1"+(-3.75,-3.75);"1"+(-2.75,-7)}\ar@{->}"1"+(-1.75,-7.5);"1"+(1.75,-7.5) \ar@{->}"1"+(2.75,-7);"1"+(3.75,-3.75) \ar@{->}"1"+(-3.15,-8.15);"1"+(-5.75,-10) \ar@{->}"1"+(-6,-9.5);"1"+(-4.25,-4) \ar@{->}"1"+(2.25,-8.5);"1"+(0.5,-14.5) {\color{red}\ar@{->}"1"+(-0.5,-14.5);"1"+(-2.25,-8.5)}{\color{red}\ar@{->}"1"+(4.25,-4);"1"+(6,-9.5)}\ar@{->}"1"+(5.75,-10);"1"+(3.15,-8.15) \ar@{->}@/^-3.5mm/"1"+(3.75,-2);"1"+(-3.5,-2) \ar@{->}@/^-3.5mm/"1"+(-6.75,-11.25);"1"+(-0.5,-16) {\color{red}\ar@{->}@/^-3.5mm/"1"+(0.5,-16);"1"+(6.75,-11.25)}
  \end{xy}
  \hspace{6mm}
  \begin{xy}
   (0,0)="0", +(8,5.75)="1", +(8,-5.75)="2", +(-3.05,-9.5)="3", +(-9.9,0)="4"
   \ar@{.}"0";"1" \ar@{.}"1";"2" \ar@{.}"2";"3" \ar@{.}"3";"4" \ar@{.}"4";"0" \ar@{.}"1";"4" \ar@{.}"1";"3"
   \ar@{->}"1"+(-3.75,-3.75);"1"+(-2.75,-7) \ar@{->}"1"+(-1.75,-7.5);"1"+(1.75,-7.5) {\color{red}\ar@{->}"1"+(2.75,-7);"1"+(3.75,-3.75)}\ar@{->}"1"+(-3.15,-8.15);"1"+(-5.75,-10) {\color{red}\ar@{->}"1"+(-6,-9.5);"1"+(-4.25,-4)}\ar@{->}"1"+(2.25,-8.5);"1"+(0.5,-14.5) {\color{red}\ar@{->}"1"+(-0.5,-14.5);"1"+(-2.25,-8.5)}\ar@{->}"1"+(4.25,-4);"1"+(6,-9.5) \ar@{->}"1"+(5.75,-10);"1"+(3.15,-8.15) \ar@{->}@/^-3.5mm/"1"+(3.75,-2);"1"+(-3.5,-2) \ar@{->}@/^-3.5mm/"1"+(-6.75,-11.25);"1"+(-0.5,-16) {\color{red}\ar@{->}@/^-3.5mm/"1"+(0.5,-16);"1"+(6.75,-11.25)}
  \end{xy}
  \hspace{6mm}
  \begin{xy}
   (0,0)="0", +(8,5.75)="1", +(8,-5.75)="2", +(-3.05,-9.5)="3", +(-9.9,0)="4"
   \ar@{.}"0";"1" \ar@{.}"1";"2" \ar@{.}"2";"3" \ar@{.}"3";"4" \ar@{.}"4";"0" \ar@{.}"1";"4" \ar@{.}"1";"3"
   {\color{red}\ar@{->}"1"+(-3.75,-3.75);"1"+(-2.75,-7)}\ar@{->}"1"+(-1.75,-7.5);"1"+(1.75,-7.5) \ar@{->}"1"+(2.75,-7);"1"+(3.75,-3.75) \ar@{->}"1"+(-3.15,-8.15);"1"+(-5.75,-10) \ar@{->}"1"+(-6,-9.5);"1"+(-4.25,-4) \ar@{->}"1"+(2.25,-8.5);"1"+(0.5,-14.5) {\color{red}\ar@{->}"1"+(-0.5,-14.5);"1"+(-2.25,-8.5)}\ar@{->}"1"+(4.25,-4);"1"+(6,-9.5) {\color{red}\ar@{->}"1"+(5.75,-10);"1"+(3.15,-8.15)}\ar@{->}@/^-3.5mm/"1"+(3.75,-2);"1"+(-3.5,-2) \ar@{->}@/^-3.5mm/"1"+(-6.75,-11.25);"1"+(-0.5,-16) \ar@{->}@/^-3.5mm/"1"+(0.5,-16);"1"+(6.75,-11.25)
  \end{xy}
  \hspace{6mm}
  \begin{xy}
   (0,0)="0", +(8,5.75)="1", +(8,-5.75)="2", +(-3.05,-9.5)="3", +(-9.9,0)="4"
   \ar@{.}"0";"1" \ar@{.}"1";"2" \ar@{.}"2";"3" \ar@{.}"3";"4" \ar@{.}"4";"0" \ar@{.}"1";"4" \ar@{.}"1";"3"
   \ar@{->}"1"+(-3.75,-3.75);"1"+(-2.75,-7) {\color{red}\ar@{->}"1"+(-1.75,-7.5);"1"+(1.75,-7.5)}\ar@{->}"1"+(2.75,-7);"1"+(3.75,-3.75) {\color{red}\ar@{->}"1"+(-3.15,-8.15);"1"+(-5.75,-10)}\ar@{->}"1"+(-6,-9.5);"1"+(-4.25,-4) \ar@{->}"1"+(2.25,-8.5);"1"+(0.5,-14.5) \ar@{->}"1"+(-0.5,-14.5);"1"+(-2.25,-8.5) \ar@{->}"1"+(4.25,-4);"1"+(6,-9.5) {\color{red}\ar@{->}"1"+(5.75,-10);"1"+(3.15,-8.15)}\ar@{->}@/^-3.5mm/"1"+(3.75,-2);"1"+(-3.5,-2) \ar@{->}@/^-3.5mm/"1"+(-6.75,-11.25);"1"+(-0.5,-16) \ar@{->}@/^-3.5mm/"1"+(0.5,-16);"1"+(6.75,-11.25)
  \end{xy}
  \hspace{6mm}
  \begin{xy}
   (0,0)="0", +(8,5.75)="1", +(8,-5.75)="2", +(-3.05,-9.5)="3", +(-9.9,0)="4"
   \ar@{.}"0";"1" \ar@{.}"1";"2" \ar@{.}"2";"3" \ar@{.}"3";"4" \ar@{.}"4";"0" \ar@{.}"1";"4" \ar@{.}"1";"3"
   \ar@{->}"1"+(-3.75,-3.75);"1"+(-2.75,-7) \ar@{->}"1"+(-1.75,-7.5);"1"+(1.75,-7.5) {\color{red}\ar@{->}"1"+(2.75,-7);"1"+(3.75,-3.75)}{\color{red}\ar@{->}"1"+(-3.15,-8.15);"1"+(-5.75,-10)}\ar@{->}"1"+(-6,-9.5);"1"+(-4.25,-4) {\color{red}\ar@{->}"1"+(2.25,-8.5);"1"+(0.5,-14.5)}\ar@{->}"1"+(-0.5,-14.5);"1"+(-2.25,-8.5) \ar@{->}"1"+(4.25,-4);"1"+(6,-9.5) \ar@{->}"1"+(5.75,-10);"1"+(3.15,-8.15) \ar@{->}@/^-3.5mm/"1"+(3.75,-2);"1"+(-3.5,-2) \ar@{->}@/^-3.5mm/"1"+(-6.75,-11.25);"1"+(-0.5,-16) \ar@{->}@/^-3.5mm/"1"+(0.5,-16);"1"+(6.75,-11.25)
  \end{xy}
\]
\end{example}\vspace{3mm}

 Cuts of $(\widetilde{Q},\widetilde{W})$ have the following property, where we denote by $|C|$ the cardinality of a cut $C$.

\begin{lemma}\label{minnum}
 (a) For any cut $C$ of $(\widetilde{Q},\widetilde{W})$, we have $|C| \ge n+1 = |Q_0|+1$.\par
 (b) The equality in (a) holds if and only if $C$ does not contain any external arrow. In this case, $C \subset \overline{Q}_1$.
\end{lemma}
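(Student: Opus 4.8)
The plan is to exploit that the $n+1$ triangle cycles appearing in $\widetilde{W}$ are pairwise arrow-disjoint, that no arrow lies on two big cycles, and then to run two elementary incidence counts.

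The first step is to pin down which arrows of $\widetilde{Q}$ lie on which cyclic paths of $\widetilde{W}$. An arrow on a triangle cycle joins two arcs that are sides of a common triangle of $T$, and such a triangle is unique; conversely each arrow of $\overline{Q}_1\sqcup I$ joins two arcs with a unique common triangle and lies on that triangle's cycle, whereas no external arrow lies on a triangle cycle. Hence the set of arrows occurring on triangle cycles is exactly $\overline{Q}_1\sqcup I$, partitioned into the $n+1$ arrow-disjoint triples indexed by the triangles of $T$. Likewise each big cycle winds around a single vertex $v$ of the polygon, and consists of one external arrow together with the arrows of $\overline{Q}_1\sqcup I$ joining consecutive arcs at $v$; so an arrow of $\overline{Q}_1\sqcup I$ lies on the big cycle of the common vertex of its two arcs, and on no other big cycle. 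Since $Q_T$ is acyclic, $T$ has exactly two ears, namely the vertices of $\triangle_0$ and $\triangle_n$ opposite their unique diagonal side (in the labelling of the proof of Theorem \ref{angle}); the two arrows of $I$ are precisely those of $\overline{Q}_1\sqcup I$ winding around an ear, and an ear carries no big cycle, so the arrows of $I$ are exactly the arrows lying on no big cycle. In particular every arrow of $\widetilde{Q}$ lies on at most one big cycle.

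Part (a) is immediate: a cut $C$ contains exactly one arrow of each of the $n+1$ pairwise arrow-disjoint triangle cycles, so $|C|\ge n+1$. For part (b), first suppose $|C|=n+1$. By the cut condition and the arrow-disjointness of the triangle cycles, $C$ is the disjoint union over the $n+1$ triangle cycles of the unique arrow of $C$ lying on each, so $C\subseteq\overline{Q}_1\sqcup I$ and in particular $C$ contains no external arrow. To promote this to $C\subseteq\overline{Q}_1$, count the pairs $(B,\alpha)$ with $B$ a big cycle, $\alpha\in C$, and $\alpha$ on $B$: summing over the $n+1$ big cycles gives exactly $n+1$ such pairs (one arrow of $C$ per big cycle), while summing over the $n+1$ arrows of $C$ gives at most $n+1$ (each arrow on at most one big cycle). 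Equality forces every arrow of $C$ to lie on exactly one big cycle; since the two arrows of $I$ lie on none, $C\cap I=\emptyset$, hence $C\subseteq\overline{Q}_1$. Conversely, if $C$ contains no external arrow then $C\subseteq\overline{Q}_1\sqcup I$, the disjoint union of the $n+1$ triangle cycles, so summing the cut condition over them gives $|C|=n+1$ (and then $C\subseteq\overline{Q}_1$ by the case just treated).

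The only substantive work is the combinatorial identification in the second paragraph: determining exactly which arrows occur on triangle cycles and on big cycles, and verifying that the arrows of $I$ are precisely the two arrows on no big cycle. Once that is in hand, both parts of the lemma drop out of the two one-line incidence counts, so I expect this arrow-versus-cycle bookkeeping, rather than any estimate, to be where the care is needed.
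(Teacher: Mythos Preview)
Your argument is correct and is essentially the same as the paper's, just unpacked: the paper compresses everything into three sentences (disjoint triangle cycles give the lower bound; triangle cycles avoid external arrows, giving the equivalence with ``no external arrow''; $n+1$ big cycles force $C\subseteq\overline{Q}_1$), while you make the incidence structure and the double count explicit. Your identification of $I$ as the two arrows lying on no big cycle, and the resulting pigeonhole showing $C\cap I=\emptyset$, is exactly what the paper's terse ``since there are $n+1$ big cycles'' is gesturing at.
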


\begin{proof}
 There are $n+1$ triangle cycles not sharing arrows with each other, and triangle cycles does not contain external arrows. Thus (a) and the first assertion of (b) follow. Since there are $n+1$ big cycles, the second assertion of (b) follows. 
\end{proof}


\subsection{Maximal discrete subsets and minimal cuts} We define minimal cuts.

\begin{definition}\label{mincut}
A cut $C$ of $(\widetilde{Q},\widetilde{W})$ is called {\it minimal} if $|C| = n+1$.
\end{definition}

By Theorem \ref{cut} below, $(\widetilde{Q},\widetilde{W})$ always has minimal cuts.

\begin{example}
 In Example \ref{A2cut}, there are three minimal cuts:
\[
  \{a_1,b_3,c_3\}\hspace{3mm}
  \begin{xy}
   (0,3)="0", +(8,5.75)="1", +(8,-5.75)="2", +(-3.05,-9.5)="3", +(-9.9,0)="4"
   \ar@{.}"0";"1" \ar@{.}"1";"2" \ar@{.}"2";"3" \ar@{.}"3";"4" \ar@{.}"4";"0" \ar@{.}"1";"4" \ar@{.}"1";"3"
   {\color{red}\ar@{->}"1"+(-3.75,-3.75);"1"+(-2.75,-7)}\ar@{->}"1"+(-1.75,-7.5);"1"+(1.75,-7.5) \ar@{->}"1"+(2.75,-7);"1"+(3.75,-3.75) \ar@{->}"1"+(-3.15,-8.15);"1"+(-5.75,-10) \ar@{->}"1"+(-6,-9.5);"1"+(-4.25,-4) \ar@{->}"1"+(2.25,-8.5);"1"+(0.5,-14.5) {\color{red}\ar@{->}"1"+(-0.5,-14.5);"1"+(-2.25,-8.5)}\ar@{->}"1"+(4.25,-4);"1"+(6,-9.5) {\color{red}\ar@{->}"1"+(5.75,-10);"1"+(3.15,-8.15)}\ar@{->}@/^-3.5mm/"1"+(3.75,-2);"1"+(-3.5,-2) \ar@{->}@/^-3.5mm/"1"+(-6.75,-11.25);"1"+(-0.5,-16) \ar@{->}@/^-3.5mm/"1"+(0.5,-16);"1"+(6.75,-11.25)
  \end{xy}
  \hspace{7mm}
  \{a_2,b_1,c_3\}\hspace{3mm}
  \begin{xy}
   (0,3)="0", +(8,5.75)="1", +(8,-5.75)="2", +(-3.05,-9.5)="3", +(-9.9,0)="4"
   \ar@{.}"0";"1" \ar@{.}"1";"2" \ar@{.}"2";"3" \ar@{.}"3";"4" \ar@{.}"4";"0" \ar@{.}"1";"4" \ar@{.}"1";"3"
   \ar@{->}"1"+(-3.75,-3.75);"1"+(-2.75,-7) {\color{red}\ar@{->}"1"+(-1.75,-7.5);"1"+(1.75,-7.5)}\ar@{->}"1"+(2.75,-7);"1"+(3.75,-3.75) {\color{red}\ar@{->}"1"+(-3.15,-8.15);"1"+(-5.75,-10)}\ar@{->}"1"+(-6,-9.5);"1"+(-4.25,-4) \ar@{->}"1"+(2.25,-8.5);"1"+(0.5,-14.5) \ar@{->}"1"+(-0.5,-14.5);"1"+(-2.25,-8.5) \ar@{->}"1"+(4.25,-4);"1"+(6,-9.5) {\color{red}\ar@{->}"1"+(5.75,-10);"1"+(3.15,-8.15)}\ar@{->}@/^-3.5mm/"1"+(3.75,-2);"1"+(-3.5,-2) \ar@{->}@/^-3.5mm/"1"+(-6.75,-11.25);"1"+(-0.5,-16) \ar@{->}@/^-3.5mm/"1"+(0.5,-16);"1"+(6.75,-11.25)
  \end{xy}
  \hspace{7mm}
  \{a_2,b_2,c_1\}\hspace{3mm}
  \begin{xy}
   (0,3)="0", +(8,5.75)="1", +(8,-5.75)="2", +(-3.05,-9.5)="3", +(-9.9,0)="4"
   \ar@{.}"0";"1" \ar@{.}"1";"2" \ar@{.}"2";"3" \ar@{.}"3";"4" \ar@{.}"4";"0" \ar@{.}"1";"4" \ar@{.}"1";"3"
   \ar@{->}"1"+(-3.75,-3.75);"1"+(-2.75,-7) \ar@{->}"1"+(-1.75,-7.5);"1"+(1.75,-7.5) {\color{red}\ar@{->}"1"+(2.75,-7);"1"+(3.75,-3.75)}{\color{red}\ar@{->}"1"+(-3.15,-8.15);"1"+(-5.75,-10)}\ar@{->}"1"+(-6,-9.5);"1"+(-4.25,-4) {\color{red}\ar@{->}"1"+(2.25,-8.5);"1"+(0.5,-14.5)}\ar@{->}"1"+(-0.5,-14.5);"1"+(-2.25,-8.5) \ar@{->}"1"+(4.25,-4);"1"+(6,-9.5) \ar@{->}"1"+(5.75,-10);"1"+(3.15,-8.15) \ar@{->}@/^-3.5mm/"1"+(3.75,-2);"1"+(-3.5,-2) \ar@{->}@/^-3.5mm/"1"+(-6.75,-11.25);"1"+(-0.5,-16) \ar@{->}@/^-3.5mm/"1"+(0.5,-16);"1"+(6.75,-11.25)
  \end{xy}
\]
\end{example}

 We give the main theorem of this section.

\begin{theorem}\label{cut}
 Maximal discrete subsets of $\overline{Q}$ are precisely minimal cuts of $(\widetilde{Q},\widetilde{W})$.
\end{theorem}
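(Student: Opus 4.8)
The plan is to deduce the statement from the bijection $\rho\colon{\mathbb A}(T)\to{\mathbb D}(\overline{Q})$ of Proposition \ref{DPM} together with Lemma \ref{minnum}, after identifying the cyclic paths occurring in $\widetilde{W}$ — namely the triangle cycles and the big cycles — in terms of the angles of $T$. First I would extend $\rho$ to a bijection $\widehat{\rho}$ from the set of \emph{all} angles of $T$ (including the two angles at the vertices incident to no diagonal) onto $\overline{Q}_1\sqcup I$, by sending the angle of a triangle $\triangle$ at a vertex $v$ to the arrow of $\widetilde{Q}$ joining, in the clockwise direction, the two sides of $\triangle$ meeting at $v$. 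By construction $\widehat{\rho}$ restricts to $\rho$ on $A(T)$ and carries the two remaining angles into $I$.

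Next I would establish two structural facts about $(\widetilde{Q},\widetilde{W})$. (i) For each triangle $\triangle_i$ of $T$ the corresponding triangle cycle of $\widetilde{Q}$ is the $3$-cycle whose arrow set is $\{\widehat{\rho}(a):a\ \text{an angle of}\ \triangle_i\}$; this is immediate, since the three arrows of $\widetilde{Q}$ inside $\triangle_i$ are exactly the clockwise arrows between consecutive pairs of its sides. (ii) The big cycles of $\widetilde{Q}$ are in bijection with the vertices incident to at least one diagonal, i.e.\ with $v_0,\dots,v_n$, and the big cycle around $v_j$ has arrow set $\{e_j\}\cup\{\rho(a_{ij}):a_{ij}\in A(T)\}$, where $e_j\in E$ is its unique external arrow; indeed such a big cycle passes through each triangle at $v_j$ exactly once, entering and leaving through the two sides of that triangle at $v_j$, hence traversing inside $\triangle_i$ precisely the arrow $\rho(a_{ij})$, and it is then completed by $e_j$.

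Granting (i) and (ii), the theorem becomes essentially formal. Fix $C\subseteq\overline{Q}_1$ and put $A:=\rho^{-1}(C)\subseteq A(T)$. Since the unique $I$-arrow occurring in the triangle cycle of $\triangle_0$ or of $\triangle_n$ never lies in $C\subseteq\overline{Q}_1$, (i) shows that $C$ meets every triangle cycle in exactly one arrow if and only if $A$ satisfies (A1); and since $C$ contains no external arrow, (ii) shows that $C$ meets every big cycle in exactly one arrow if and only if $A$ satisfies (A2). By Lemma \ref{A1A2}, a subset $C\subseteq\overline{Q}_1$ is therefore a cut of $(\widetilde{Q},\widetilde{W})$ if and only if $\rho^{-1}(C)\in{\mathbb A}(T)$.

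Finally I would combine this with Lemma \ref{minnum}(b) and Proposition \ref{DPM}. If $D\in{\mathbb D}(\overline{Q})$, then $D=\rho(A)$ for some $A\in{\mathbb A}(T)$ by Proposition \ref{DPM}; by the previous step $D\subseteq\overline{Q}_1$ is a cut containing no external arrow, so $|D|=n+1$ by Lemma \ref{minnum}(b) and $D$ is a minimal cut. Conversely, if $C$ is a minimal cut, then $C\subseteq\overline{Q}_1$ by Lemma \ref{minnum}(b), hence $\rho^{-1}(C)\in{\mathbb A}(T)$ by the previous step and $C=\rho(\rho^{-1}(C))\in{\mathbb D}(\overline{Q})$ by Proposition \ref{DPM}. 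The one place where genuine work is needed is fact (ii): one must check carefully, from the combinatorics of the triangulated polygon and the definition of the big cycles in \cite{DL}, that inside each triangle incident to $v_j$ the big cycle uses exactly the arrow $\rho(a_{ij})$ attached to the angle at $v_j$, rather than one of the other two arrows of that triangle; everything else is bookkeeping.
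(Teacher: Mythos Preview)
Your proposal is correct and follows essentially the same approach as the paper: both arguments identify, for $C\subseteq\overline{Q}_1$, the condition ``$C$ meets every triangle cycle in exactly one arrow'' with (A1) and ``$C$ meets every big cycle in exactly one arrow'' with (A2), then invoke Proposition~\ref{DPM} and Lemma~\ref{minnum}(b). Your version is simply more explicit about the structural facts (i) and (ii) that the paper leaves implicit, and your auxiliary extension $\widehat{\rho}$ is a harmless bookkeeping device that the paper does not bother to introduce.
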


\begin{proof}
 Let $C$ be a subset of $\overline{Q}_1$ and $A:=\rho^{-1}(C)$. Then $A$ satisfies (A1) (resp., (A2)) if and only if any triangle cycle (resp., big cycle) contains precisely one arrow in $C$. Thus $C$ is a cut if and only if $A$ is a perfect matching. By Proposition \ref{DPM}, this is equivalent to that $C$ is a maximal discrete subset. Since minimal cuts are precisely cuts contained in $\overline{Q}_1$ by Lemma \ref{minnum} (b), the assertion follows.
\end{proof}

 We can give another cluster expansion formula in terms of minimal cuts from Corollary \ref{main1} and Theorem \ref{cut}. Let $(\widetilde{Q}^{[i,j]},\widetilde{W}^{[i,j]})$ be the quiver with potential obtained from $T^{[i,j]}$, and ${\mathbb C}(\widetilde{Q}^{[i,j]},\widetilde{W}^{[i,j]})$ be the set of all minimal cuts of $(\widetilde{Q}^{[i,j]},\widetilde{W}^{[i,j]})$.

\begin{corollary}\label{cutformula}
 For $1 \le i \le j \le n$, we have
 \begin{equation*}
  f^{[i,j]}=\sum_{C \in {\mathbb C}(\widetilde{Q}^{[i,j]},\widetilde{W}^{[i,j]})} \prod_{\alpha \in C}x_{\alpha},
 \end{equation*}
where $x_{\alpha}$ is the initial cluster variable corresponding to the third side of the triangle in $T$ with sides $s(\alpha)$ and $t(\alpha)$.
\end{corollary}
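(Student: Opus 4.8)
The plan is to deduce the formula directly from Corollary~\ref{main1} together with Theorem~\ref{cut}, once we observe that Theorem~\ref{cut} is available not only for $T$ but for every triangulated subpolygon $T^{[i,j]}$.

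First I would check the hypothesis. Since $Q$ is acyclic of type $A_n$ with underlying path $1-2-\cdots-n$, the full subquiver of $Q$ on the vertices $[i,j]$ is acyclic of type $A_{j-i+1}$, and it coincides with $Q_{T^{[i,j]}}$. Hence all the notions of Section~4 --- the quiver with potential, cuts, and minimal cuts --- make sense for $T^{[i,j]}$, and Theorem~\ref{cut} applies verbatim with $T$ replaced by $T^{[i,j]}$. This yields the equality
\[
 {\mathbb D}(\overline{Q}^{[i,j]}) = {\mathbb C}(\widetilde{Q}^{[i,j]},\widetilde{W}^{[i,j]})
\]
of subsets of $\overline{Q}^{[i,j]}_1$; here one uses Lemma~\ref{minnum}(b) to see that every minimal cut is contained in $\overline{Q}^{[i,j]}_1$, so that both sides genuinely live in the power set of the same arrow set.

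Next I would match the monomials term by term. If $C$ is a minimal cut of $(\widetilde{Q}^{[i,j]},\widetilde{W}^{[i,j]})$ and $D$ is the corresponding maximal discrete subset of $\overline{Q}^{[i,j]}$ under the above identification, then $C=D$ as subsets of $\overline{Q}^{[i,j]}_1$; for each arrow $\alpha$ in this set the arcs $s(\alpha)$ and $t(\alpha)$ bound a common triangle of $T$ whose third side determines an initial cluster variable $x_\alpha$, and this is the very same variable assigned to $\alpha$ both in Corollary~\ref{main1} and in the statement of Corollary~\ref{cutformula}. Consequently $\prod_{\alpha\in D}x_\alpha=\prod_{\alpha\in C}x_\alpha$.

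Combining these observations with Corollary~\ref{main1} gives
\[
 f^{[i,j]}=\sum_{D\in{\mathbb D}(\overline{Q}^{[i,j]})}\prod_{\alpha\in D}x_\alpha=\sum_{C\in{\mathbb C}(\widetilde{Q}^{[i,j]},\widetilde{W}^{[i,j]})}\prod_{\alpha\in C}x_\alpha,
\]
which is the asserted formula. I do not expect a serious obstacle here: the only point that deserves to be spelled out is that Theorem~\ref{cut}, although stated for the full polygon $T$, transfers to the subpolygon $T^{[i,j]}$ because $Q_{T^{[i,j]}}$ is again acyclic; after that the argument is a matter of unwinding the definitions of the weights.
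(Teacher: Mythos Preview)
Your proposal is correct and follows exactly the same approach as the paper, which simply states that the result follows from Corollary~\ref{main1} and Theorem~\ref{cut}. You have merely spelled out the implicit step that Theorem~\ref{cut} applies to each subpolygon $T^{[i,j]}$ (since $Q_{T^{[i,j]}}$ is again acyclic of type $A$), and that the weight $x_\alpha$ is defined identically in both statements; the paper leaves these details to the reader.
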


\begin{proof}
 It follows from Corollary \ref{main1} and Theorem \ref{cut}.
\end{proof}

\begin{example}
 For the quiver $Q=[ 1 \rightarrow 2 \rightarrow 3 ]$, we consider the quiver with potential $(\widetilde{Q},\widetilde{W})$.
\[
  \begin{xy}
   (0,0)="0", +(17.5,10)="1", +(17.5,-10)="2", +(0,-20)="3", +(-17.5,-10)="4", +(-17.5,10)="5", "0"+(-7,0)*{\widetilde{Q}}
   \ar@{.}"0";"1"|{4} \ar@{.}"1";"2"|{9} \ar@{.}"2";"3"|{8} \ar@{.}"3";"4"|{7} \ar@{.}"4";"5"|{6} \ar@{.}"5";"0"|{5} \ar@{.}"1";"5"|{1} \ar@{.}"1";"4"|{2} \ar@{.}"1";"3"|{3} \ar@{->}"1"+(-8.8,-6.5);"1"+(-8.8,-13)|{x_5} \ar@{->}"1"+(-10,-15.5);"1"+(-16.5,-19.5)|{x_4} \ar@{->}"1"+(-16.5,-18);"1"+(-9.5,-7)^{a} \ar@{->}"1"+(-7.5,-15.5);"1"+(-1,-20)|{x_6} \ar@{->}"1"+(-1,-22);"1"+(-7.5,-33.5)|{x_1} \ar@{->}"1"+(-8.8,-33);"1"+(-8.8,-17)|{x_2} \ar@{->}"1"+(1,-20);"1"+(7.5,-15.5)|{x_7} \ar@{->}"1"+(8.8,-17);"1"+(8.8,-33)|{x_2} \ar@{->}"1"+(7.5,-33.5);"1"+(1,-22)|{x_3} \ar@{->}"1"+(8.8,-13);"1"+(8.8,-6.5)|{x_8} \ar@{->}"1"+(9.5,-7);"1"+(16.5,-18)^{b} \ar@{->}"1"+(16.5,-19.5);"1"+(10,-15.5)|{x_9} \ar@{->}@/^-6mm/"1"+(8,-3.5);"1"+(-8,-3.5)_{\alpha} \ar@{->}@/^-6mm/"1"+(-18.5,-21);"1"+(-10.5,-35)_{\beta} \ar@{->}@/^-6mm/"1"+(-8,-36.5);"1"+(8,-36.5)_{\gamma} \ar@{->}@/^-6mm/"1"+(10.5,-35);"1"+(18.5,-21)_{\delta}
  \end{xy}
  \hspace{7mm}
  \begin{xy}
   (0,0)*{\widetilde{W}=x_5x_4a+x_6x_1x_2+x_7x_2x_3+x_8bx_9} +(0,-5)*{-\alpha x_5x_6x_7x_8-\beta x_2x_4-\gamma x_3x_1-\delta x_9x_2}
  \end{xy}
\]
 We have $f^{[1,3]}=x_1x_4x_7x_9+x_3x_4x_6x_9+x_1x_2x_4x_8+x_2x_3x_5x_9$ since there are four minimal cuts of $(\widetilde{Q},\widetilde{W})$ as follows:
\[
  \begin{xy}
   (0,0)="0", +(8.75,5)="1", +(8.75,-5)="2", +(0,-10)="3", +(-8.75,-5)="4", +(-8.75,5)="5", "1"+(0,4)*{\{x_4,x_1,x_7,x_9\}}
   \ar@{.}"0";"1" \ar@{.}"1";"2" \ar@{.}"2";"3" \ar@{.}"3";"4" \ar@{.}"4";"5" \ar@{.}"5";"0" \ar@{.}"1";"5" \ar@{.}"1";"4" \ar@{.}"1";"3" \ar@{->}"1"+(-4.4,-3.25);"1"+(-4.4,-6.5) {\color{red}\ar@{->}"1"+(-5,-7.75);"1"+(-8.25,-9.75)}\ar@{->}"1"+(-8.25,-9);"1"+(-4.75,-3.5) \ar@{->}"1"+(-3.75,-7.75);"1"+(-0.5,-10) {\color{red}\ar@{->}"1"+(-0.5,-11);"1"+(-3.75,-16.75)}\ar@{->}"1"+(-4.4,-16.5);"1"+(-4.4,-8.5) {\color{red}\ar@{->}"1"+(0.5,-10);"1"+(3.75,-7.75)}\ar@{->}"1"+(4.4,-8.5);"1"+(4.4,-16.5) \ar@{->}"1"+(3.75,-16.75);"1"+(0.5,-11) \ar@{->}"1"+(4.4,-6.5);"1"+(4.4,-3.25) \ar@{->}"1"+(4.75,-3.5);"1"+(8.25,-9) {\color{red}\ar@{->}"1"+(8.25,-9.75);"1"+(5,-7.75)}\ar@{->}@/^-3mm/"1"+(4,-1.75);"1"+(-4,-1.75) \ar@{->}@/^-3mm/"1"+(-9.25,-10.5);"1"+(-5.25,-17.5) \ar@{->}@/^-3mm/"1"+(-4,-18.25);"1"+(4,-18.25) \ar@{->}@/^-3mm/"1"+(5.25,-17.5);"1"+(9.25,-10.5)
  \end{xy}
  \hspace{7mm}
  \begin{xy}
   (0,0)="0", +(8.75,5)="1", +(8.75,-5)="2", +(0,-10)="3", +(-8.75,-5)="4", +(-8.75,5)="5", "1"+(0,4)*{\{x_4,x_6,x_3,x_9\}}
   \ar@{.}"0";"1" \ar@{.}"1";"2" \ar@{.}"2";"3" \ar@{.}"3";"4" \ar@{.}"4";"5" \ar@{.}"5";"0" \ar@{.}"1";"5" \ar@{.}"1";"4" \ar@{.}"1";"3" \ar@{->}"1"+(-4.4,-3.25);"1"+(-4.4,-6.5) {\color{red}\ar@{->}"1"+(-5,-7.75);"1"+(-8.25,-9.75)}\ar@{->}"1"+(-8.25,-9);"1"+(-4.75,-3.5) {\color{red}\ar@{->}"1"+(-3.75,-7.75);"1"+(-0.5,-10)}\ar@{->}"1"+(-0.5,-11);"1"+(-3.75,-16.75) \ar@{->}"1"+(-4.4,-16.5);"1"+(-4.4,-8.5) \ar@{->}"1"+(0.5,-10);"1"+(3.75,-7.75) \ar@{->}"1"+(4.4,-8.5);"1"+(4.4,-16.5) {\color{red}\ar@{->}"1"+(3.75,-16.75);"1"+(0.5,-11)}\ar@{->}"1"+(4.4,-6.5);"1"+(4.4,-3.25) \ar@{->}"1"+(4.75,-3.5);"1"+(8.25,-9) {\color{red}\ar@{->}"1"+(8.25,-9.75);"1"+(5,-7.75)}\ar@{->}@/^-3mm/"1"+(4,-1.75);"1"+(-4,-1.75) \ar@{->}@/^-3mm/"1"+(-9.25,-10.5);"1"+(-5.25,-17.5) \ar@{->}@/^-3mm/"1"+(-4,-18.25);"1"+(4,-18.25) \ar@{->}@/^-3mm/"1"+(5.25,-17.5);"1"+(9.25,-10.5)
  \end{xy}
  \hspace{7mm}
  \begin{xy}
   (0,0)="0", +(8.75,5)="1", +(8.75,-5)="2", +(0,-10)="3", +(-8.75,-5)="4", +(-8.75,5)="5", "1"+(0,4)*{\{x_4,x_1,x_2,x_8\}}
   \ar@{.}"0";"1" \ar@{.}"1";"2" \ar@{.}"2";"3" \ar@{.}"3";"4" \ar@{.}"4";"5" \ar@{.}"5";"0" \ar@{.}"1";"5" \ar@{.}"1";"4" \ar@{.}"1";"3" \ar@{->}"1"+(-4.4,-3.25);"1"+(-4.4,-6.5) {\color{red}\ar@{->}"1"+(-5,-7.75);"1"+(-8.25,-9.75)}\ar@{->}"1"+(-8.25,-9);"1"+(-4.75,-3.5) \ar@{->}"1"+(-3.75,-7.75);"1"+(-0.5,-10) {\color{red}\ar@{->}"1"+(-0.5,-11);"1"+(-3.75,-16.75)}\ar@{->}"1"+(-4.4,-16.5);"1"+(-4.4,-8.5) \ar@{->}"1"+(0.5,-10);"1"+(3.75,-7.75) {\color{red}\ar@{->}"1"+(4.4,-8.5);"1"+(4.4,-16.5)}\ar@{->}"1"+(3.75,-16.75);"1"+(0.5,-11) {\color{red}\ar@{->}"1"+(4.4,-6.5);"1"+(4.4,-3.25)}\ar@{->}"1"+(4.75,-3.5);"1"+(8.25,-9) \ar@{->}"1"+(8.25,-9.75);"1"+(5,-7.75) \ar@{->}@/^-3mm/"1"+(4,-1.75);"1"+(-4,-1.75) \ar@{->}@/^-3mm/"1"+(-9.25,-10.5);"1"+(-5.25,-17.5) \ar@{->}@/^-3mm/"1"+(-4,-18.25);"1"+(4,-18.25) \ar@{->}@/^-3mm/"1"+(5.25,-17.5);"1"+(9.25,-10.5)
  \end{xy}
  \hspace{7mm}
  \begin{xy}
   (0,0)="0", +(8.75,5)="1", +(8.75,-5)="2", +(0,-10)="3", +(-8.75,-5)="4", +(-8.75,5)="5", "1"+(0,4)*{\{x_5,x_2,x_3,x_9\}}
   \ar@{.}"0";"1" \ar@{.}"1";"2" \ar@{.}"2";"3" \ar@{.}"3";"4" \ar@{.}"4";"5" \ar@{.}"5";"0" \ar@{.}"1";"5" \ar@{.}"1";"4" \ar@{.}"1";"3" {\color{red}\ar@{->}"1"+(-4.4,-3.25);"1"+(-4.4,-6.5)}\ar@{->}"1"+(-5,-7.75);"1"+(-8.25,-9.75) \ar@{->}"1"+(-8.25,-9);"1"+(-4.75,-3.5) \ar@{->}"1"+(-3.75,-7.75);"1"+(-0.5,-10) \ar@{->}"1"+(-0.5,-11);"1"+(-3.75,-16.75) {\color{red}\ar@{->}"1"+(-4.4,-16.5);"1"+(-4.4,-8.5)}\ar@{->}"1"+(0.5,-10);"1"+(3.75,-7.75) \ar@{->}"1"+(4.4,-8.5);"1"+(4.4,-16.5) {\color{red}\ar@{->}"1"+(3.75,-16.75);"1"+(0.5,-11)}\ar@{->}"1"+(4.4,-6.5);"1"+(4.4,-3.25) \ar@{->}"1"+(4.75,-3.5);"1"+(8.25,-9) {\color{red}\ar@{->}"1"+(8.25,-9.75);"1"+(5,-7.75)}\ar@{->}@/^-3mm/"1"+(4,-1.75);"1"+(-4,-1.75) \ar@{->}@/^-3mm/"1"+(-9.25,-10.5);"1"+(-5.25,-17.5) \ar@{->}@/^-3mm/"1"+(-4,-18.25);"1"+(4,-18.25) \ar@{->}@/^-3mm/"1"+(5.25,-17.5);"1"+(9.25,-10.5)
  \end{xy}
\]
 Secondly, for the case $(i,j)=(1,2)$, we consider the quiver with potential $(\widetilde{Q}^{[1,2]},\widetilde{W}^{[1,2]})$.
\[
  \begin{xy}
   (0,0)="0", +(17.5,10)="1", +(17.5,-10)="2", +(0,-20)="3", +(-17.5,-10)="4", +(-17.5,10)="5", "0"+(-9,0)*{\widetilde{Q}^{[1,2]}}
   \ar@{.}"0";"1"|{4} \ar@{.}"1";"2" \ar@{.}"2";"3" \ar@{.}"3";"4"|{7} \ar@{.}"4";"5"|{6} \ar@{.}"5";"0"|{5} \ar@{.}"1";"5"|{1} \ar@{.}"1";"4"|{2} \ar@{.}"1";"3"|{3} \ar@{->}"1"+(-8.8,-6.5);"1"+(-8.8,-13)|{x_5} \ar@{->}"1"+(-10,-15.5);"1"+(-16.5,-19.5)|{x_4} \ar@{->}"1"+(-16.5,-18);"1"+(-9.5,-7)^{a} \ar@{->}"1"+(-7.5,-15.5);"1"+(-1,-20)|{x_6} \ar@{->}"1"+(-1,-22);"1"+(-7.5,-33.5)|{x_1} \ar@{->}"1"+(-8.8,-33);"1"+(-8.8,-17)|{x_2} \ar@{->}"1"+(1,-20);"1"+(7.5,-15.5)|{x_7} \ar@{->}"1"+(8.8,-17);"1"+(8.8,-33)|{x_2} \ar@{->}"1"+(7.5,-33.5);"1"+(1,-22)|{x_3} \ar@{->}@/^-10mm/"1"+(8.8,-13);"1"+(-8,-3.5)_/2.5mm/{\alpha'} \ar@{->}@/^-6mm/"1"+(-18.5,-21);"1"+(-10.5,-35)_{\beta} \ar@{->}@/^-6mm/"1"+(-8,-36.5);"1"+(8,-36.5)_{\gamma}
  \end{xy}\hspace{7mm}
  \begin{xy}
   (0,0)*{\widetilde{W}^{[1,2]}=x_5x_4a+x_6x_1x_2+x_7x_2x_3} +(0,-5)*{-\alpha' x_5x_6x_7-\beta x_2x_4-\gamma x_3x_1}
  \end{xy}
\]
We have $f^{[1,2]}=x_1x_4x_7+x_3x_4x_6+x_2x_3x_5$ since there are three minimal cuts of $(\widetilde{Q}^{[1,2]},\widetilde{W}^{[1,2]})$ as follows:
\[
  \begin{xy}
   (0,0)="0", +(8.75,5)="1", +(8.75,-5)="2", +(0,-10)="3", +(-8.75,-5)="4", +(-8.75,5)="5", "1"+(0,4)*{\{x_4,x_1,x_7\}}
   \ar@{.}"0";"1" \ar@{.}"1";"2" \ar@{.}"2";"3" \ar@{.}"3";"4" \ar@{.}"4";"5" \ar@{.}"5";"0" \ar@{.}"1";"5" \ar@{.}"1";"4" \ar@{.}"1";"3" \ar@{->}"1"+(-4.4,-3.25);"1"+(-4.4,-6.5) {\color{red}\ar@{->}"1"+(-5,-7.75);"1"+(-8.25,-9.75)}\ar@{->}"1"+(-8.25,-9);"1"+(-4.75,-3.5) \ar@{->}"1"+(-3.75,-7.75);"1"+(-0.5,-10) {\color{red}\ar@{->}"1"+(-0.5,-11);"1"+(-3.75,-16.75)}\ar@{->}"1"+(-4.4,-16.5);"1"+(-4.4,-8.5) {\color{red}\ar@{->}"1"+(0.5,-10);"1"+(3.75,-7.75)}\ar@{->}"1"+(4.4,-8.5);"1"+(4.4,-16.5) \ar@{->}"1"+(3.75,-16.75);"1"+(0.5,-11) \ar@{->}@/^-5mm/"1"+(4.4,-6.5);"1"+(-4,-1.75) \ar@{->}@/^-3mm/"1"+(-9.25,-10.5);"1"+(-5.25,-17.5) \ar@{->}@/^-3mm/"1"+(-4,-18.25);"1"+(4,-18.25)
  \end{xy}
  \hspace{10mm}
  \begin{xy}
   (0,0)="0", +(8.75,5)="1", +(8.75,-5)="2", +(0,-10)="3", +(-8.75,-5)="4", +(-8.75,5)="5", "1"+(0,4)*{\{x_4,x_6,x_3\}}
   \ar@{.}"0";"1" \ar@{.}"1";"2" \ar@{.}"2";"3" \ar@{.}"3";"4" \ar@{.}"4";"5" \ar@{.}"5";"0" \ar@{.}"1";"5" \ar@{.}"1";"4" \ar@{.}"1";"3" \ar@{->}"1"+(-4.4,-3.25);"1"+(-4.4,-6.5) {\color{red}\ar@{->}"1"+(-5,-7.75);"1"+(-8.25,-9.75)}\ar@{->}"1"+(-8.25,-9);"1"+(-4.75,-3.5) {\color{red}\ar@{->}"1"+(-3.75,-7.75);"1"+(-0.5,-10)}\ar@{->}"1"+(-0.5,-11);"1"+(-3.75,-16.75) \ar@{->}"1"+(-4.4,-16.5);"1"+(-4.4,-8.5) \ar@{->}"1"+(0.5,-10);"1"+(3.75,-7.75) \ar@{->}"1"+(4.4,-8.5);"1"+(4.4,-16.5) {\color{red}\ar@{->}"1"+(3.75,-16.75);"1"+(0.5,-11)}\ar@{->}@/^-5mm/"1"+(4.4,-6.5);"1"+(-4,-1.75) \ar@{->}@/^-3mm/"1"+(-9.25,-10.5);"1"+(-5.25,-17.5) \ar@{->}@/^-3mm/"1"+(-4,-18.25);"1"+(4,-18.25)
  \end{xy}
  \hspace{10mm}
  \begin{xy}
   (0,0)="0", +(8.75,5)="1", +(8.75,-5)="2", +(0,-10)="3", +(-8.75,-5)="4", +(-8.75,5)="5", "1"+(0,4)*{\{x_5,x_2,x_3\}}
   \ar@{.}"0";"1" \ar@{.}"1";"2" \ar@{.}"2";"3" \ar@{.}"3";"4" \ar@{.}"4";"5" \ar@{.}"5";"0" \ar@{.}"1";"5" \ar@{.}"1";"4" \ar@{.}"1";"3" {\color{red}\ar@{->}"1"+(-4.4,-3.25);"1"+(-4.4,-6.5)}\ar@{->}"1"+(-5,-7.75);"1"+(-8.25,-9.75) \ar@{->}"1"+(-8.25,-9);"1"+(-4.75,-3.5) \ar@{->}"1"+(-3.75,-7.75);"1"+(-0.5,-10) \ar@{->}"1"+(-0.5,-11);"1"+(-3.75,-16.75) {\color{red}\ar@{->}"1"+(-4.4,-16.5);"1"+(-4.4,-8.5)}\ar@{->}"1"+(0.5,-10);"1"+(3.75,-7.75) \ar@{->}"1"+(4.4,-8.5);"1"+(4.4,-16.5) {\color{red}\ar@{->}"1"+(3.75,-16.75);"1"+(0.5,-11)}\ar@{->}@/^-5mm/"1"+(4.4,-6.5);"1"+(-4,-1.75) \ar@{->}@/^-3mm/"1"+(-9.25,-10.5);"1"+(-5.25,-17.5) \ar@{->}@/^-3mm/"1"+(-4,-18.25);"1"+(4,-18.25)
  \end{xy}
\]
\end{example}

\medskip\noindent{\bf Acknowledgements}.
The author would like to thank his supervisor Osamu Iyama and Laurent Demonet for the helpful advice and instruction.

\end{document}